\definecolor{lightgray}{gray}{0.85}
\newtheorem{theorem}{Theorem}[section]
\newtheorem{corollary}[theorem]{Corollary}
\newtheorem{definition}[theorem]{Definition}
\newtheorem{proposition}[theorem]{Proposition}
\newtheorem{remark}[theorem]{Remark}
\newtheorem{theo}{Theorem}
\newenvironment{proof}{\noindent\textbf{Proof.} }{\hfill$\Box$}
\numberwithin{equation}{section}
\newcommand{\lqn}[1]{\noalign{\noindent $\displaystyle#1$}}
\begin{document}

\title{Strong approximations for the $p$-fold integrated empirical process
\\
with applications to statistical tests}
\author{Sergio Alvarez-Andrade\footnote{e-mail: sergio.alvarez@utc.fr}~,
~ Salim Bouzebda\footnote{e-mail: salim.bouzebda@utc.fr}
~ and Aim\'e Lachal\footnote{e-mail: aime.lachal@insa-lyon.fr}
\\~\\
$^{*\dag}$ Sorbonne Universit\'es, Universit\'e de Technologie de Compi\`egne
\\
\textit{Laboratoire de Math\'ematiques Appliqu\'ees de Compi\`egne}
\\
$^\ddag$ Universit\'e de Lyon, Institut National des Sciences Appliqu\'ees de Lyon
\\
\textit{Institut Camille Jordan}}
\date{}
\maketitle

\begin{abstract}
\noindent
The main purpose of this paper is to investigate the strong approximation of
the $p$-fold integrated empirical process, $p$ being a fixed positive integer.
More precisely, we obtain the exact rate of the
approximations by a sequence of weighted Brownian bridges and a weighted Kiefer
process. Our arguments are based in part on the \cite{KMT1975}'s results.
We  obtain an exponential bound for the tail probability of the
weighted approximation to the $p$-fold integrated empirical process.
Applications include the two-sample testing procedures together with the
change-point problems. We also consider the strong approximation of integrated
empirical processes when the parameters are estimated. We study the
behavior of the self-intersection local time of the partial sum process
representation of integrated empirical processes. Finally, simulation results
are provided to illustrate the finite sample performance of the proposed
statistical tests based on the integrated empirical processes.
\\
\noindent \textbf{Key words:} Integrated empirical process; Brownian bridge;
Kiefer process; Rates of convergence; Local time; Two-sample problem;
Hypothesis testing; Goodness-of-fit; Change-point.
\\
\noindent \textbf{AMS Classifications:} primary: 62G30; 62G20; 60F17;
secondary: 62F03; 62F12; 60F15.
\end{abstract}

\section{Introduction}

Let $F=\{F(t),t\in\mathbb{R}\}$ be a continuous distribution function [d.f.] and denote by
$Q=\{Q(u),u\in[0,1]\}$ the usual quantile function (generalized inverse) pertaining to $F$
defined as
$$
Q(u) := \inf\{t\in\mathbb{R}: F(t)\geq u\} \quad\text{for}\quad u\in(0,1),
$$
$$
Q(0):=\lim_{u\downarrow 0}Q(u)\quad\text{and}\quad Q(1):=\lim_{u\uparrow 1}Q(u).
$$
The function $Q$ is strictly increasing and we have $F(Q(u))=u$ for any $u\in[0,1]$.
We denote the sets $\lbrace 0, 1, 2,\ldots\rbrace$ and $\lbrace 1, 2, \ldots\rbrace$
respectively by $\mathbb{N}$ and $\mathbb{N}^*$.
Consider now a sequence of independent, identically
distributed [i.i.d.] random variables [r.v.'s] $\{U_i: i\in\mathbb{N}^*\}$
uniformly distributed on $[0,1]$ and, for each $i\in\mathbb{N}^*$, set $X_i:=Q(U_i)$.
The sequence $\{X_i: i\in\mathbb{N}^*\}$ consists of i.i.d. r.v.'s with
d.f. $F$: $F(t)=\mathbb{P}\{X_1\leq t\}$ for $t\in\mathbb{R}$
(cf., e.g., \cite{Shorack1986}, p.~3 and the references therein). Moreover,
we conversely have $U_i=F(X_i)$.

For each $n\in\mathbb{N}^*$, let $\mathbb{F}_n$  and $\mathbb{U}_n$ be the
empirical d.f.'s based upon the respective samples $X_1,\ldots,X_n$ and
$U_1,\ldots,U_n$ defined by
\begin{align*}
\mathbb{F}_n(t)
&:=\frac{1}{n}\#\{i\in\{1,\dots,n\}:X_i\leq t\}
=\frac{1}{n}\sum_{i=1}^n \mathbbm{1}_{\{X_i\leq t\}}
\quad\text{for}\quad t\in\mathbb{R},
\\
\mathbb{U}_n(u)
&:= \frac{1}{n}\#\{i\in\{1,\dots,n\}:U_i \leq u\}
=\frac{1}{n}\sum_{i=1}^n \mathbbm{1}_{\{U_i\leq u\}}
\quad\text{for}\quad u\in[0,1],
\end{align*}
where $\#$ denotes cardinality. For each $n\in\mathbb{N}^*$,
we introduce the \emph{empirical process} $\alpha_n$ and the
\emph{uniform empirical process} $\beta_n$ defined by
\begin{align}
\alpha_n(t)
&:=\sqrt{n}\,(\mathbb{F}_n(t)-F(t))
\quad\text{for}\quad t\in\mathbb{R},
\label{beta1}\\
\beta_n(u)
&:= \sqrt{n}\left(\mathbb{U}_n(u)-u\right)
\quad\text{for}\quad u\in[0,1].
\label{uniformprocesse}
\end{align}
We have of course the usual relations between the empirical process and uniform empirical
process:
\begin{align}
\alpha_n(t)
&=\beta_n(F(t))\quad\text{for}\quad t\in\mathbb{R},\,n\in\mathbb{N}^*,
\label{identity}
\\
\beta_n(u)
&=\alpha_n(Q(u))\quad\text{for}\quad u\in\mathbb[0,1],\,n\in\mathbb{N}^*.
\label{identity2}
\end{align}

In this paper, we consider integrated empirical d.f.'s based upon the samples
$X_1,\ldots,X_n$ and $U_1,\ldots,U_n$ together with the corresponding integrated
empirical processes in the following sense.
\begin{definition}\label{def}
We define the families of integrated d.f.'s and integrated empirical d.f.'s
associated with the d.f.~$F$, for any $p\in\mathbb{N}$, any $n\in\mathbb{N}^*$
and any $t\in\mathbb{R}$, as
$$
F^{(0)}(t):=F(t),\quad \mathbb{F}_n^{(0)}(t):=\mathbb{F}_n(t),
$$
$$
F^{(1)}(t):=\int_{-\infty}^{t} F(s)\,dF(s),\quad
\mathbb{F}_n^{(1)}(t):=\int_{-\infty}^{t} \mathbb{F}_n(s)\,d\mathbb{F}_n(s),
$$
and for $p\geq 2$,
$$
F^{(p)}(t):=\int_{-\infty}^{t} dF(s_1)\int_{-\infty}^{s_1}\,dF(s_2)\dots
\int_{-\infty}^{s_{p-1}} F(s_p)\,dF(s_p),
$$
$$
\mathbb{F}_n^{(p)}(t):=\int_{-\infty}^{t}\,d\mathbb{F}_n(s_1)
\int_{-\infty}^{s_1}\,d\mathbb{F}_n(s_2)\dots\int_{-\infty}^{s_{p-1}}
\mathbb{F}_n(s_p)\,d\mathbb{F}_n(s_p),
$$
together with the corresponding family of integrated empirical processes as
\begin{equation}\label{alpha}
\alpha_n^{(p)}(t):=\sqrt{n}\left(\mathbb{F}_n^{(p)}(t)-F^{(p)}(t)\right)\!.
\end{equation}
\end{definition}

Notice that $F^{(p)}$ (resp. $\mathbb{F}_n^{(p)}$) is a kind of \emph{$p$-fold
integral} with respect to the measure $dF$ (resp. $d\mathbb{F}_n$). Hence,
we will call $F^{(p)}$ (resp. $\mathbb{F}_n^{(p)}$, $\alpha_n^{(p)}$)
throughout the paper \emph{$p$-fold integrated d.f.} (resp. \emph{$p$-fold
integrated empirical d.f., $p$-fold integrated empirical process}).
Finally, we define exactly in the same manner the
\emph{$p$-fold integrated uniform empirical d.f.} $\mathbb{U}_n^{(p)}$
and the \emph{$p$-fold integrated uniform empirical process} $\beta_n^{(p)}$.

\begin{figure}[!ht]
\begin{center}
\centerline{
\includegraphics[width=8cm]{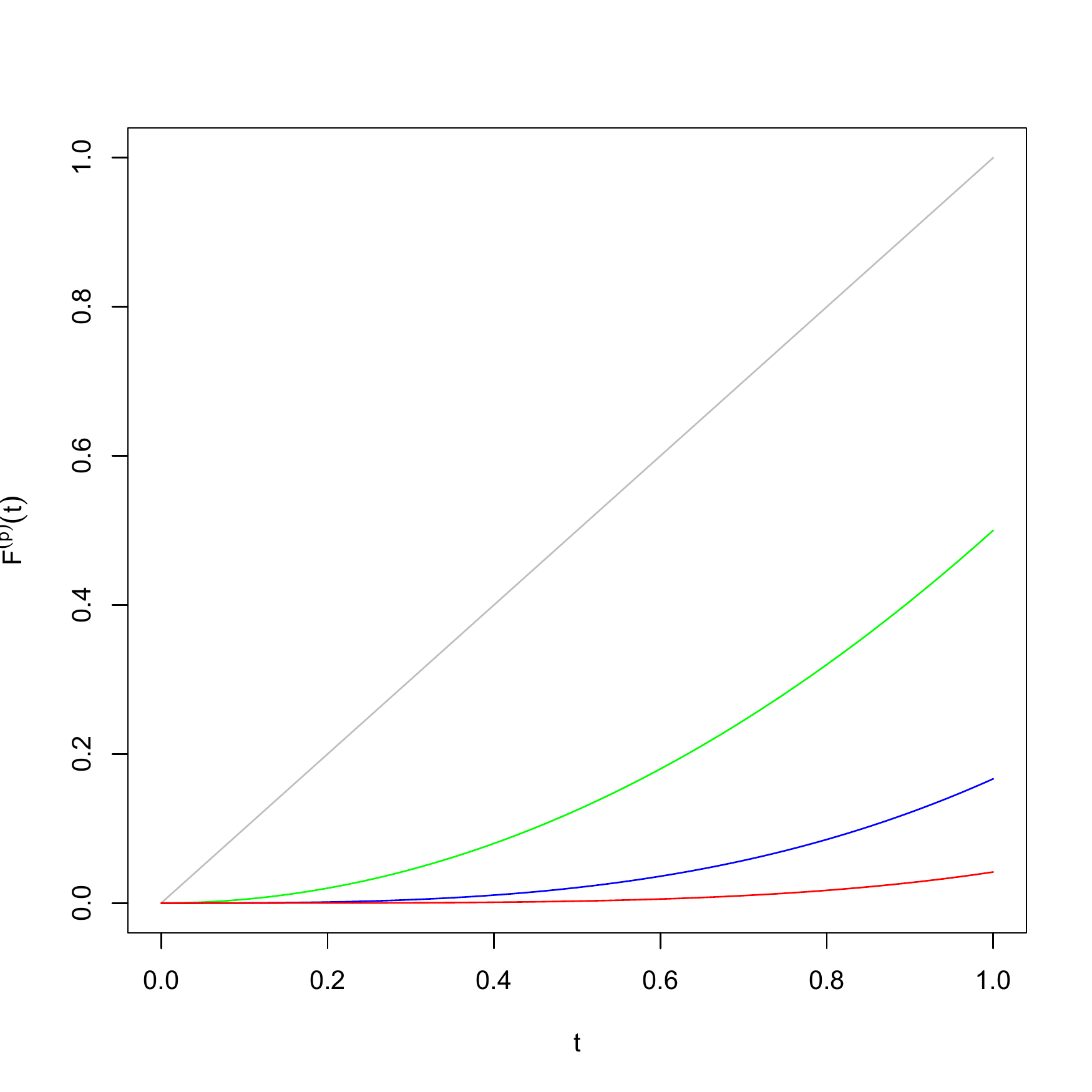}
\includegraphics[width=8cm]{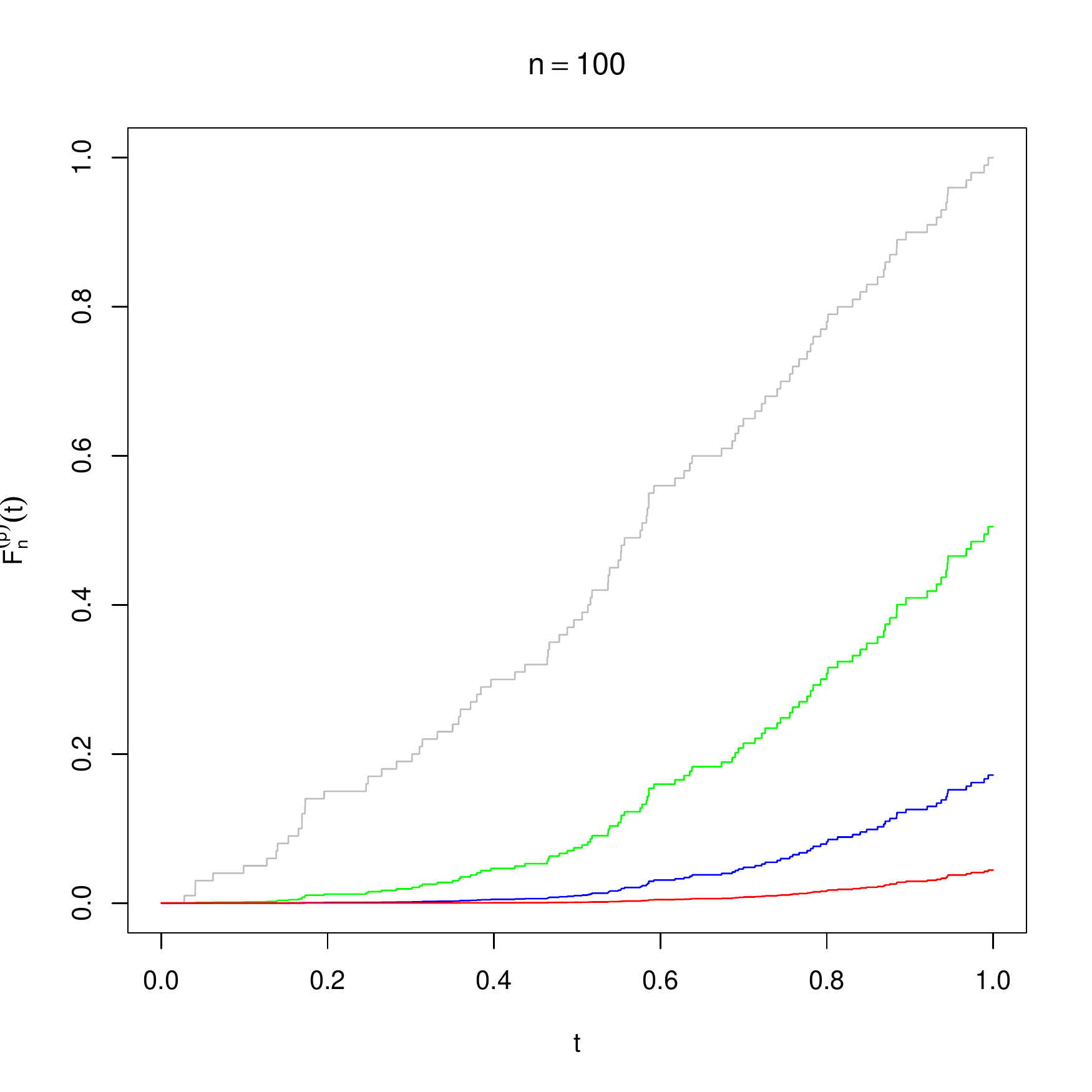}}
\end{center}
\caption{On the left: the theoretical functions $F^{(p)}, p=0,1,2,3$;
on the right: the empirical counterparts for the uniform distribution.
(``{\textcolor{gray}{--}}'': $\mathbb{F}_{n}, F$ ),
(``{\textcolor{green}{--}}'':  $\mathbb{F}_{n}^{(1)},~~F^{(1)}$),
(``{\textcolor{blue}{--}}'':  $\mathbb{F}_{n}^{(2)}, ~~F^{(2)}$),
(``{\textcolor{red}{--}}'':  $\mathbb{F}_{n}^{(3)}, ~~F^{(3)}$)} \label{normallocation2}
\end{figure}

\begin{figure}[!ht]
\begin{center}
\centerline{\includegraphics[width=9cm]{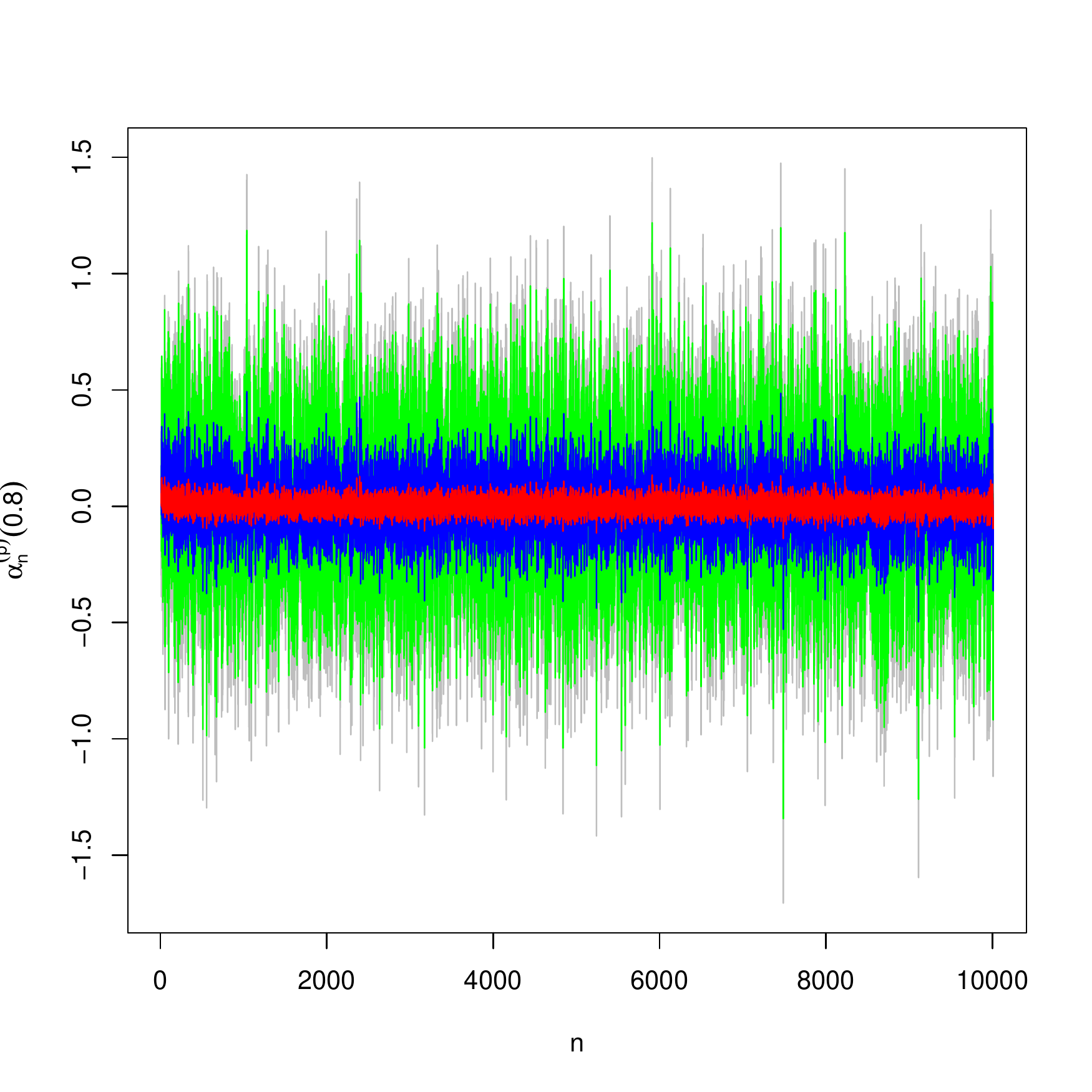}}
\end{center}
\caption{The uniform empirical processes for $p=0,1,2,3$ as functions of the sample size.
(``{\textcolor{gray}{--}}'': $\alpha_{n},$ ),
(``{\textcolor{green}{--}}'':  $\alpha^{(1)}_{n},$),
(``{\textcolor{blue}{--}}'':  $\alpha_{n}^{(2)},$),
(``{\textcolor{red}{--}}'':  $\alpha_{n}^{(3)}$)} \label{normallocation2}
\end{figure}
Below, we provide explicit expressions for $F^{(p)}$ and $\mathbb{F}_n^{(p)}$,
the proof of which are postponed to Section~\ref{appe}.
\begin{proposition}\label{expFp}
For each $p\in\mathbb{N}$, we explicitly have, with probability~$1$,
\begin{equation}\label{repFnp1}
F^{(p)}(t)=\frac{F(t)^{p+1}}{(p+1)!},\quad
\mathbb{F}_n^{(p)}(t)=\frac{1}{n^{p+1}} {n\mathbb{F}_{n}(t)+p \choose p+1}
\quad\text{for}\quad t\in\mathbb{R},\,n\in\mathbb{N}^*.
\end{equation}
\end{proposition}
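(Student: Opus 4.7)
The plan is to treat the two assertions as parallel statements both provable by induction on $p$, after first observing that the nested integrals in Definition~\ref{def} can be rewritten recursively. Indeed, from the stated formulas one reads off
\[
F^{(p)}(t)=\int_{-\infty}^{t}F^{(p-1)}(s)\,dF(s),\qquad
\mathbb{F}_n^{(p)}(t)=\int_{-\infty}^{t}\mathbb{F}_n^{(p-1)}(s)\,d\mathbb{F}_n(s),
\]
because the innermost block $\int_{-\infty}^{s_{p-1}}F(s_p)\,dF(s_p)$ already equals $F(s_{p-1})^2/2=F^{(1)}(s_{p-1})$, and peeling off one integration step then yields the recursion. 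This reduction turns the whole question into two one-dimensional induction problems.

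For the theoretical d.f., once the recursion is in place the induction is immediate: assuming $F^{(p-1)}(s)=F(s)^{p}/p!$, the change of variable $u=F(s)$ (valid because $F$ is continuous, and needing no more than a direct Stieltjes computation if $F$ has atoms absent here) gives
\[
F^{(p)}(t)=\int_{-\infty}^{t}\frac{F(s)^{p}}{p!}\,dF(s)=\frac{F(t)^{p+1}}{(p+1)!},
\]
which closes the induction from the base $F^{(0)}=F$.

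For the empirical side, I would exploit that $F$ is continuous, so with probability $1$ the $X_i$'s are pairwise distinct. Let $X_{(1)}<\cdots<X_{(n)}$ be the order statistics; then $\mathbb{F}_n$ is right-continuous with jumps of size $1/n$ at each $X_{(i)}$ and $\mathbb{F}_n(X_{(i)})=i/n$. Setting $k:=n\mathbb{F}_n(t)$, a Stieltjes integral against $d\mathbb{F}_n$ reduces to a finite sum over $i=1,\dots,k$. Assuming inductively that $\mathbb{F}_n^{(p-1)}(s)=\binom{n\mathbb{F}_n(s)+p-1}{p}/n^{p}$, the recursion gives
\[
\mathbb{F}_n^{(p)}(t)=\frac{1}{n}\sum_{i=1}^{k}\mathbb{F}_n^{(p-1)}(X_{(i)})
=\frac{1}{n^{p+1}}\sum_{i=1}^{k}\binom{i+p-1}{p},
\]
and the hockey-stick identity $\sum_{i=1}^{k}\binom{i+p-1}{p}=\binom{k+p}{p+1}$ closes the induction.

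The only point that requires some care, and which I would regard as the main (minor) obstacle, is the bookkeeping of the innermost integral in the statement of Definition~\ref{def}: its integrand is $F(s_p)$ rather than $dF(s_p)/\mbox{something}$, so one must correctly identify it with $F^{(1)}$ (not $F^{(0)}$) before unfolding the recursion. Once this is cleared, the computation is routine and both formulas drop out in parallel from the combinatorial identity above.
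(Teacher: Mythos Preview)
Your proof is correct and follows essentially the same route as the paper's: both set up the recursion $\mathbb{F}_n^{(p)}(t)=\int_{-\infty}^{t}\mathbb{F}_n^{(p-1)}(s)\,d\mathbb{F}_n(s)$, use continuity of $F$ to get distinct order statistics almost surely, and reduce the Stieltjes integral to the finite sum $\frac{1}{n}\sum_{i=1}^{n\mathbb{F}_n(t)}\mathbb{F}_n^{(p-1)}(X_{(i)})$. The only cosmetic difference is that the paper phrases the induction as a count of weakly increasing $(p{+}1)$-tuples in $\{1,\dots,n\mathbb{F}_n(t)\}$ and then invokes the stars-and-bars identification with $\binom{n\mathbb{F}_n(t)+p}{p+1}$, whereas you go straight to the closed form via the hockey-stick identity; these are of course the same combinatorics.
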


The particular case where $p=1$ has often been considered in the literature.
\cite{HenzeNikitin2000,HenzeNikitin2002} introduced and deeply investigated
the goodness-of-fit testing procedures based on the integrated empirical process.
Indeed, the asymptotic properties of their procedures, Kolmogorov-Smirnov,
Cram\'er-von Mises and Watson-type statistics, can be derived from the limiting
behavior of the integrated empirical process. \cite{HenzeNikitin2003} considered
a two-sample testing procedure and focused on the approximate local Bahadur
efficiencies of their statistical tests. It is noteworthy to point out that
tests based on some integrated empirical processes turn out to be more efficient
for certain distributions, we may refer at this point to
\cite{HenzeNikitin2003,HenzeNikitin2002,HenzeNikitin2000}.
In \cite{HenzeNikitin2003}, it is shown that
the statistics based on the integrated empirical processes  perform better
the classical ones, under asymmetric alternatives. For instance,
Kolmogorov-Smirnov test based on these integrated empiricals has maximal
Bahadur efficiency if the underlying distribution is skew-Laplace.
We mention also that in the paper \cite{HenzeNikitin2000} where a new approach
to goodness-of-fit testing was proposed by using integrated
empirical processes. In this paper, the authors have established that
the integrated Kolmogorov- Smirnov test is locally Bahadur optimal
for the logistic distribution and the statistic
$$
\sqrt{n}\int_{\mathbb{R}} \left(\mathbb{F}_n^{(p)}(t)
-F_0^{(p)}(t)\right) dF_0(t), ~~\mbox{for} ~~ p=1,
$$
turns out to be locally Bahadur optimal for the ``root-logistic distribution''.
In the paper by \cite{HenzeNikitin2002}, one of the new tests is locally
Bahadur optimal for the hyperbolic cosine distribution, we may refer to Theorem 6.1
therein for exact formulation. In \cite{Aime2001}, another version of the $p$-fold
integrated empirical process ($p\in\mathbb{N}^*$) was introduced. For the extension
to the multivariate framework, we may refer to \cite{Jing2006} and \cite{Jing2007}
where some projected integrated empirical processes for testing the equality of
two multivariate distributions are considered. Inspired by the work of
\cite{HenzeNikitin2003}, \cite{BouzebdaElfaouzy2012} developed multivariate
two-sample testing procedures based on the integrated empirical copula process
that are extended to the $K$-sample problem in \cite{Bouzebdaetal2011}.
Emphasis is placed on the explanation of the strong approximation methodology.
The asymptotic behavior of weighted multivariate Cram\'er-von Mises-type
statistics under contiguous alternatives was characterized by
\cite{BouzebdaZari2013}. For more recent references, we refer to
\cite{DurioNikitin2016}, \cite{Bouzebda2016} and \cite{Bouzebdaetal2015}.

The main purpose of this paper is to investigate the strong approximation of
the $p$-fold integrated empirical process. Next we use the obtained results
for studying the asymptotic properties of statistical tests based on this process.
We point out that strong approximations are quite useful and have
received considerable attention in probability theory. Indeed, many
well-known and important probability theorems can be considered as consequences of
results about strong approximation of sequences of sums by corresponding
Gaussian sequences.

We will first obtain an upper bound in probability for the distance
between the $p$-fold integrated empirical process and a sequence of appropriate
Brownian bridges (see Theorem~\ref{lem2a}). This is the key point of our study.
From this, we will deduce a strong approximation of the $p$-fold integrated
empirical process by this sequence of { transformed} Brownian bridges (see Corollary~\ref{corollaryapprox}).
As an application, we will derive the rates of convergence for the
distribution of smooth functionals of each $p$-fold integrated empirical process
(see Corollary~\ref{corol22}). Moreover, we will deduce
strong approximations for the Kolmogorov-Smirnov and Cram\'er-von Mises-type
statistics associated with the $p$-fold integrated empirical processes
(see Corollary~\ref{ecooooo}).

Second, we will obtain a strong approximation of the $p$-fold integrated
empirical process by a { transformed} Kiefer processes (see Theorem~\ref{kieferapproximation}).
This latter is of particular interest; indeed, for instance, any kind of law
of the iterated logarithm which holds for the partial sums of Gaussian
processes may then be transferred to the $p$-fold integrated empirical processes
(see Corollary~\ref{corollarylli}).
We may refer to \cite{Dasgupta2008} (Chapter 12), \cite{Csorgoho1993} (Chapter 3),
\cite{Csorgo1981REVESZ} (Chapters 4-5) and \cite{Shorack1986} (Chapter 12) for
expositions, details and references about this problem.

Third, we will apply our theoretical results to some statistical applications.
More precisely, we will consider the famous two-sample and change-point problems
for which we will develop procedures based on statistics involving
$p$-fold integrated empirical distribution functions.

We refer to \cite{Hall1983S}, \cite{csorgo2007} and \cite{Mason2012} for a
survey of possible applications of the strong approximation and many references.
There is a huge literature on the strong approximations and their applications.
It is not the purpose of this paper to survey this extensive literature.

The layout of the article is as follows. In Section~\ref{sectionone}, we first
present some strong approximation results for the $p$-fold integrated empirical
process; our main tools are the results of \cite{KMT1975}.
In section \ref{section2-1}, we consider weighted approximations.
Sections~\ref{sectiontwo} and \ref{sectionchnae} are devoted to statistical applications,
namely the two-sample and change-point problems respectively. In Section~\ref{Section5},
we deal with the strong approximation of the $p$-fold integrated
empirical process when parameters are estimated. Section~\ref{section343} is
concerned with the behavior of the self-intersection local time of the partial
sum process representation of the $p$-fold integrated empirical process.
In section \ref{simulation}, simulation results are performed in order
to illustrate the finite sample performances of the proposed statistical tests
for testing uniformity based on the integrated empirical processes.
Finally, in the Appendix, we suggest some possible extensions of our work for further investigations.

To prevent from interrupting the flow of the presentation, all mathematical
developments are postponed to Section~\ref{appe}.

\section{Strong approximation}\label{sectionone}

\subsection{Some processes}

First, we introduce some definitions and notations. Let $\mathbb{W}=
\{\mathbb{W}(s): s\geq 0\}$ and $\mathbb{B}=\{\mathbb{B}(u) : u\in[0,1]\}$ be
the standard Wiener process and Brownian bridge, that is, the centered Gaussian
processes with continuous sample paths, and covariance functions
$$
\mathbb{E}(\mathbb{W}(s)\mathbb{W}(t))=s\wedge t\quad\text{for}\quad s,t\geq 0
$$
and
$$
\mathbb{E}(\mathbb{B}(u)\mathbb{B}(v))=u\wedge v- uv\quad\text{for}\quad u,v\in[0,1].
$$
A Kiefer process $\mathbb{K}=\{\mathbb{K}(s,u) : s \geq 0, u\in[0,1]\}$ is a two-parameters
centered Gaussian process, with continuous sample paths, and covariance function
$$
\mathbb{E}(\mathbb{K}(s,u)\mathbb{K}(t,v))=(s\wedge t)\,(u\wedge v-uv)
\quad\text{for}\quad s,t\geq 0\quad\text{and}\quad u,v\in[0,1].
$$
It satisfies the following distributional identities:
$$
\left\{\mathbb{K}(s,u): u\in[0,1]\right\}\stackrel{\mathcal{L}}{=}
\left\{\sqrt{s}\,\mathbb{B}(u): u\in[0,1]\right\}\quad\text{for}\quad s \geq 0
$$
and
$$
\left\{\mathbb{K}(s,u): s\geq 0\right\}\stackrel{\mathcal{L}}{=}
\left\{\sqrt{u(1-u)}\,\mathbb{W}(s): s\geq 0\right\}\quad\text{for}\quad u\in[0,1],
$$
where $\stackrel{\mathcal{L}}{=}$ stands for the equality in distribution.
The interested reader may refer to \cite{Csorgo1981REVESZ} for details on
the Gaussian processes mentioned above.

\subsection{Brownian approximation}

It is well-known that the empirical
uniform process $\{\beta_n:n\in\mathbb{N}^*\}$ converges to $\mathbb{B}$ in
$D[0,1]$ (the space of all right-continuous real-valued functions defined on
$[0,1]$ which have left-hand limits, equipped with the Skorohod topology; see,
for details, \cite{Billingsley1968}). The rate of convergence of this process
to $\mathbb{B}$ is an important task in statistics as well as in probability
that has been investigated by several authors. We can and will assume without
loss of generality that all r.v.'s and processes introduced so far and later on
in this paper can be defined on the same probability space (cf. Appendix 2 in
\cite{Csorgoho1993}).

{  In \cite{CsCsHoMa1986}, it was stated the following Brownian bridge
approximation} for $\{\beta_n:n\in\mathbb{N}^*\}$ (Formula~(\ref{major})), along with a description of its proof with few details, which has been subsequently refined by \cite{MasonvanZwet1987}
(Formula~(\ref{mason-inequality})).

\begin{theo}
On a suitable probability space, we may define the uniform empirical process
$\{\beta_n:n\in\mathbb{N}^*\}$, in combination with a sequence of Brownian
bridges $\{\mathbb{B}_n:n\in\mathbb{N}^*\}$, such that,
for any $d,n\in\mathbb{N}^*$ satisfying $d\leq n$ and any positive number $x$,
\begin{equation}\label{mason-inequality}
\mathbb{P}\bigg\{ \sup_{u\in[0,d/n]}\left|\beta_n(u)-\mathbb{B}_n(u)\right|
\geq \frac{1}{\sqrt{n}}\,(c_1\log d +x)\bigg\}\leq c_2\,\exp(-c_3x)
\end{equation}
where $c_1$, $c_2$ and $c_3$ are suitable absolute constants.
The same inequality holds when replacing the interval $[0,d/n]$ by $[1-d/n,1]$.
In particular, for $d=n$,
\begin{equation}\label{major}
\mathbb{P} \bigg\{\sup_{u\in[0,1]} |\beta_n(u)-\mathbb{B}_n(u)|
\geq \frac{1}{\sqrt{n}}\,(c_1\log n+x)\bigg\}\leq c_2 \,\exp(-c_3x).
\end{equation}
\end{theo}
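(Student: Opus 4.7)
The plan is to invoke the classical Komlós--Major--Tusnády (KMT) strong approximation and then refine it near the boundary~$0$ to obtain $\log d$ in place of $\log n$. First, I would apply the KMT embedding: on a rich enough probability space one jointly defines the i.i.d.\ uniform sample $U_1,\ldots,U_n$ and a Brownian bridge $\mathbb{B}_n$ such that
$$
\mathbb{P}\!\left\{\sup_{u\in[0,1]}|\beta_n(u)-\mathbb{B}_n(u)|\geq \frac{c_1\log n+x}{\sqrt{n}}\right\}\leq c_2\exp(-c_3x),
$$
for suitable absolute constants $c_1,c_2,c_3>0$. This is exactly the global statement~(\ref{major}) of \cite{CsCsHoMa1986}.

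To upgrade to the localized bound~(\ref{mason-inequality}), I would follow the strategy of \cite{MasonvanZwet1987}. The heuristic is that on $[0,d/n]$ the empirical process is governed only by the roughly $d$ order statistics that fall there, so the effective sample size entering the logarithmic factor should be $d$ rather than $n$. Concretely, partition $[0,d/n]$ dyadically into blocks $I_k=[2^{k-1}/n,2^k/n]$ for $k=0,1,\ldots,\lceil\log_2 d\rceil$, and on each block apply a Poissonization: the restricted empirical counting function is coupled with a Poisson process of intensity $n$, which in turn is coupled with a Brownian motion via the KMT embedding for sums of i.i.d.\ random variables. On $I_k$ this yields a tail bound of order $c_2\exp(-c_3 y)$ for an error of size $(\log 2^k+y)/\sqrt{n}$. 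A geometric union bound over the $O(\log d)$ blocks, combined with a final de-Poissonization step (conditioning the Poisson process on total mass~$n$ to recover $\beta_n$), produces the uniform bound on $[0,d/n]$ claimed in~(\ref{mason-inequality}).

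The main obstacle is precisely this localization: a naive rescaling of the global KMT inequality keeps $\log n$ throughout and cannot be improved merely by shrinking the interval; one genuinely needs a Poisson or Haar/dyadic construction that respects the natural local scales and carefully matches the local bridges into a single global $\mathbb{B}_n$. Once this is in place, the symmetric statement on $[1-d/n,1]$ follows at once from the distributional identity $\{\beta_n(1-u):u\in[0,1]\}\stackrel{\mathcal{L}}{=}\{\beta_n(u):u\in[0,1]\}$ together with a corresponding reflection of $\mathbb{B}_n$, and setting $d=n$ in~(\ref{mason-inequality}) recovers the global estimate~(\ref{major}) as a special case.
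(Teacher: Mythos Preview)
The paper does not prove this theorem at all: it is quoted verbatim as a known result, with the global inequality~(\ref{major}) attributed to \cite{CsCsHoMa1986} and the localized refinement~(\ref{mason-inequality}) to \cite{MasonvanZwet1987}. So there is no ``paper's own proof'' to compare your attempt against; the authors simply cite the result and move on to use it as input for Theorem~\ref{lem2b}.

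That said, your sketch is a faithful high-level description of the Mason--van Zwet strategy (dyadic/Haar-type localization combined with Poissonization and a KMT embedding at each scale, followed by de-Poissonization), and this is indeed what one finds in \cite{MasonvanZwet1987}. Two small remarks. First, the distributional symmetry you invoke is $\{\beta_n(1-u):u\in[0,1]\}\stackrel{\mathcal{L}}{=}\{-\beta_n(u):u\in[0,1]\}$, not equality in law without the sign; this is harmless here since only $|\beta_n-\mathbb{B}_n|$ enters, but one must also reflect the Brownian bridge consistently so that the \emph{same} coupling works on $[1-d/n,1]$, which is part of the Mason--van Zwet construction rather than a free consequence of symmetry. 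Second, for the purposes of this paper you do not need to reprove the theorem: since it is used only as a black box, citing \cite{MasonvanZwet1987} (and \cite{BretagnolleMassart1989} or \cite{Major2000} for explicit constants in~(\ref{major})) is exactly what the authors do and is entirely appropriate.
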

In~(\ref{major}), suitable explicit values for $c_1,c_2,c_3$ were exhibited by
\cite{BretagnolleMassart1989}, Theorem~1: $c_1=12,c_2=2,c_3=1/6$.
In his manuscript, \cite{Major2000} details the original proof of (\ref{major}).
\cite{Chatterjee2012} provided a new alternative approach for proving the
famous KMT theorem.

\begin{remark}
In the sequel, the precise meaning of ``suitable probability space'' is that
an independent sequence of Wiener processes, which is independent of the
originally given sequence of i.i.d. r.v.'s, can be constructed on the assumed
probability space. This is a technical requirement which allows the construction
of the Gaussian processes displayed in our theorems, and which is not restrictive
since one can expand the probability space to make it rich enough (see, e.g.,
Appendix 2 in \cite{Csorgoho1993}, \cite{deAcosta1982}, \cite{Csorgo1981REVESZ}
and Lemma A1 in \cite{Berkes1979}). Throughout this paper, it will be assumed
that the underlying probability spaces are suitable in this sense.
\end{remark}

In the following theorem, we state the key point to access the strong
Brownian approximation of the $p$-fold integrated uniform empirical process
$\big\{\beta_n^{(p)}:n\in\mathbb{N}^*\big\}$. {  The following Theorem can be seen as a version of \cite{CsCsHoMa1986} for iterated processes.}

\begin{theorem}\label{lem2b}
Fix $p\in\mathbb{N}^*$. On a suitable probability space, we may define the
$p$-fold integrated uniform empirical process $\big\{\beta_n^{(p)}:n\in\mathbb{N}^*\big\}$,
in combination with a sequence of Brownian bridges
$\{\mathbb{B}_n:n\in\mathbb{N}^*\}$, such that,
for any $d,n\in\mathbb{N}^*$ satisfying $d\leq n$ and large enough $x$,
\begin{equation}\label{estimation-b}
\mathbb{P}\bigg\{ \sup_{u\in[0,d/n]} \left|\beta_n^{(p)}(u)
-\mathbb{B}_n^{(p)}(u)\right|
\geq \frac{1}{\sqrt{n}}\,(c_1\log d +x)\bigg\}
\leq B_p \sum_{k=2}^{p+1}\exp\!\left(-C_p\,x^{2/k}n^{1-2/k}\right)
\end{equation}
where $B_p$ and $C_p$ are positive constants depending on $p$, $c_1$
is the constant arising in (\ref{major}) and, for each $n\in\mathbb{N}^*$,
$\mathbb{B}_n^{(p)}$ is the process defined by
$$
\mathbb{B}_n^{(p)}(u):=\frac{1}{p!}\,u^p\,\mathbb{B}_n(u)
\quad\text{for}\quad u\in[0,1].
$$
The same inequality holds when replacing the interval $[0,d/n]$ by $[1-d/n,1]$.
\end{theorem}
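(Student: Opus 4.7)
The plan is to reduce to inequality~(\ref{mason-inequality}) via an explicit polynomial expansion of $\beta_n^{(p)}$ in $\beta_n$. Applying Proposition~\ref{expFp} to the uniform sample,
$$
\mathbb{U}_n^{(p)}(u)=\frac{1}{(p+1)!}\prod_{j=0}^{p}\Big(\mathbb{U}_n(u)+\frac{j}{n}\Big),\qquad U^{(p)}(u)=\frac{u^{p+1}}{(p+1)!}.
$$
Substituting $\mathbb{U}_n(u)=u+\beta_n(u)/\sqrt{n}$ and expanding the product as a double sum (first over subsets of $\{0,\dots,p\}$ picking $u$ versus $\beta_n(u)/\sqrt{n}+j/n$, then over which factors contribute $\beta_n(u)/\sqrt{n}$ as opposed to $j/n$) yields
$$
\beta_n^{(p)}(u)=\frac{u^p}{p!}\,\beta_n(u)+R_n(u),
$$
where $R_n(u)$ is a finite linear combination, with constants depending only on $p$, of monomials of the form $u^{p+1-m-l}\,\beta_n(u)^m\,n^{(1-m-2l)/2}$, indexed by $(m,l)\in\mathbb{N}^2$ with $m+l\le p+1$ and $(m,l)\notin\{(0,0),(1,0)\}$.

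Coupling with the Brownian bridges supplied by Theorem~\ref{mason-inequality} and setting $\mathbb{B}_n^{(p)}(u)=\frac{u^p}{p!}\mathbb{B}_n(u)$, the main decomposition becomes
$$
\beta_n^{(p)}(u)-\mathbb{B}_n^{(p)}(u)=\frac{u^p}{p!}\bigl(\beta_n(u)-\mathbb{B}_n(u)\bigr)+R_n(u),
$$
so that $u^p/p!\le 1$ combined with~(\ref{mason-inequality}) dominates the first summand on $[0,d/n]$ by $(c_1\log d+x)/\sqrt{n}$ up to a probability at most $c_2\exp(-c_3 x)$; since $x^{2/2}n^{1-2/2}=x$, this matches the $k=2$ term of~(\ref{estimation-b}).

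The remaining work is to bound $\sup_{u\in[0,d/n]}|R_n(u)|$ by the same threshold. I treat each remainder monomial separately, using $u\le d/n$ together with the DKW-type inequality $\mathbb{P}(\sup_{[0,1]}|\beta_n|\ge y)\le 2\exp(-2y^2)$. A short computation shows that requiring the crude supremum of the $(m,l)$-monomial to exceed $x/\sqrt{n}$ forces $\sup|\beta_n|$ to exceed a threshold $y_{m,l}$ whose square satisfies $y_{m,l}^2\ge C_p\,x^{2/m}\,n^{1-2/m}$, thanks to $n\ge d\ge 1$ and $m+l\le p+1$; DKW therefore yields a probability bounded by $2\exp(-C_p\,x^{2/m}\,n^{1-2/m})$. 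A union bound over $m\in\{2,\dots,p+1\}$ and the finitely many admissible~$l$'s supplies the $k=3,\dots,p+1$ contributions to~(\ref{estimation-b}). The purely deterministic monomials ($m=0$) and the low-order mixed ones ($m=1$, $l\ge 1$) decay as $O(n^{-1/2})$ uniformly in $u\in[0,d/n]$, and are absorbed into the $x/\sqrt{n}$ tolerance as soon as $x$ is ``large enough'', which explains that qualification. The symmetric inequality on $[1-d/n,1]$ follows from the right-tail version of~(\ref{mason-inequality}) and the analogous DKW supremum bound near the upper endpoint.

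The hard part will be the bookkeeping needed to verify, uniformly over the $O(p^2)$ remainder monomials, that the ratio $y_{m,l}^2/(x^{2/m}n^{1-2/m})$, which simplifies to $(n/d)^{2(p+1-m)/m}\,d^{2l/m}$ up to a $p$-dependent constant, genuinely stays bounded below. The delicate cases are $m+l=p+1$ (where the $u$-power collapses) and $d$ close to~$n$ (where $(n/d)^{\cdot}$ offers no cushion), but both still produce a factor at least $1$. Once these routine checks are carried out, the conclusion follows from a straightforward combinatorial expansion and union bound.
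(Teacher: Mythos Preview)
Your proposal is correct and follows essentially the same route as the paper's proof: the polynomial expansion of $\beta_n^{(p)}$ via Proposition~\ref{expFp}, inequality~(\ref{mason-inequality}) for the leading $\frac{u^p}{p!}\beta_n(u)$ term, the DKW inequality for the higher powers of $\beta_n$, and absorption of the $O(n^{-1/2})$ leftovers into the ``large enough $x$'' hypothesis. The paper simplifies by bounding $u^{p+1-k}\le 1$ and $\mathbb{U}_n(u)^k\le 1$ outright rather than exploiting $u\le d/n$, so your $(n/d)^{\cdot}d^{\cdot}$ bookkeeping, while correct, is not needed for the stated bound.
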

In particular, making $d=n$ in~(\ref{estimation-b}), we obtain the key estimate for the $p$-fold
integrated empirical process $\big\{\alpha_n^{(p)}:n\in\mathbb{N}^*\big\}$ below.
\begin{theorem}\label{lem2a}
Fix $p\in\mathbb{N}^*$. On a suitable probability space, we may define the
$p$-fold integrated empirical process $\{\alpha_n^{(p)}:n\in\mathbb{N}^*\}$,
in combination with a sequence of Brownian bridges
$\left\{\mathbb{B}_n:n\in\mathbb{N}^*\right\}$, such that,
for large enough $x$ and all $n\in\mathbb{N}^*$,
\begin{equation}\label{estimation-a}
\mathbb{P}\!\left\{ \sup_{t\in\mathbb{R}} \left|\alpha_n^{(p)}(t)
-\mathbb{B}_n^{(p)}(F(t))\right|
\geq \frac{1}{\sqrt{n}}\,(c_1\log n+x)\right\}
\leq B_p \sum_{k=2}^{p+1}\exp\!\left(-C_p\,x^{2/k}n^{1-2/k}\right)\!.
\end{equation}
\end{theorem}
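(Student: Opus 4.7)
The plan is to reduce Theorem~\ref{lem2a} to Theorem~\ref{lem2b} by means of the classical quantile transformation, exactly as one passes from $\alpha_n$ to $\beta_n$ via the identity (\ref{identity}). Because we are assuming the entire statement of Theorem~\ref{lem2b} (taken with $d=n$), the remaining work is bookkeeping: match the $p$-fold integrated processes under the substitution $u=F(t)$ and then pass suprema.

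The first step is to establish the pointwise identity
$$
\alpha_n^{(p)}(t)=\beta_n^{(p)}(F(t))\qquad\text{for all }t\in\mathbb{R}, \text{ a.s.}
$$
For this I would start from $X_i=Q(U_i)$ and use continuity of $F$ to deduce $\mathbb{F}_n(t)=\mathbb{U}_n(F(t))$ almost surely. Feeding this into the explicit representation of Proposition~\ref{expFp} gives
$$
\mathbb{F}_n^{(p)}(t)=\frac{1}{n^{p+1}}\binom{n\mathbb{U}_n(F(t))+p}{p+1}=\mathbb{U}_n^{(p)}(F(t)),
$$
and the deterministic identity $F^{(p)}(t)=F(t)^{p+1}/(p+1)!$ yields $F^{(p)}(t)=U^{(p)}(F(t))$ with $U^{(p)}(u)=u^{p+1}/(p+1)!$ the $p$-fold integral of the identity. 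Subtracting and multiplying by $\sqrt{n}$ delivers the desired identity. This is the only step that uses Definition~\ref{def} and Proposition~\ref{expFp}; the continuity hypothesis on $F$ enters precisely here.

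The second step is to work on the probability space provided by Theorem~\ref{lem2b}, on which the Brownian bridges $\{\mathbb{B}_n\}$ and hence the processes $\mathbb{B}_n^{(p)}(u)=u^p\mathbb{B}_n(u)/p!$ already live jointly with $\beta_n^{(p)}$. By the step~1 identity,
$$
\sup_{t\in\mathbb{R}}\bigl|\alpha_n^{(p)}(t)-\mathbb{B}_n^{(p)}(F(t))\bigr|
=\sup_{t\in\mathbb{R}}\bigl|\beta_n^{(p)}(F(t))-\mathbb{B}_n^{(p)}(F(t))\bigr|
\leq \sup_{u\in[0,1]}\bigl|\beta_n^{(p)}(u)-\mathbb{B}_n^{(p)}(u)\bigr|,
$$
where the last inequality is just the trivial bound obtained by enlarging the domain from the range of $F$ to all of $[0,1]$. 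Theorem~\ref{lem2b} applied with $d=n$ immediately gives the tail bound claimed in~(\ref{estimation-a}), with the same constants $c_1,B_p,C_p$.

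In terms of difficulty, there is no genuine obstacle at this level: the substance of the approximation is entirely packed into Theorem~\ref{lem2b}, whose proof combines the KMT inequality~(\ref{mason-inequality}) with control of the polynomial weights $u^p$ coming from $\mathbb{B}_n^{(p)}$ and from the Taylor-type expansion of the binomial in Proposition~\ref{expFp}. The only point requiring minor care in the present deduction is the \emph{range-of-$F$} issue in the supremum; continuity of $F$ ensures the image is dense in $[0,1]$, and the one-sided inequality above is all we need for a tail-probability bound, so no lower-bound matching is required.
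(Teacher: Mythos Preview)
Your proposal is correct and matches the paper's approach exactly: the paper treats Theorem~\ref{lem2a} as an immediate specialization of Theorem~\ref{lem2b} with $d=n$, relying implicitly on the identity $\alpha_n^{(p)}(t)=\beta_n^{(p)}(F(t))$ that you have made explicit via Proposition~\ref{expFp} and the relation $\mathbb{F}_n(t)=\mathbb{U}_n(F(t))$. Your added remark about enlarging the supremum from the range of $F$ to $[0,1]$ is the right way to handle that detail.
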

One of the immediate consequences of Theorem~\ref{lem2a} is an upper bound for the
convergence of distributions of smooth functionals of $\alpha_n^{(p)}$.
Indeed, applying (\ref{estimation-a}) with $x=c\log n$
for a suitable constant $c$ yields the result below. Notice that the following corollary
is the analogous of the Corollary of \cite{KMT1975} page 113.
Let $\mathcal{D}(\mathbb{A})$ be the space of right-continuous real-valued
functions defined on $\mathbb{A}$ which have left-hand limits, equipped
with the Skorohod topology; refer to \cite{Billingsley1968} for further details on this problem.
\begin{corollary}\label{corol22}
Fix $p\in\mathbb{N}^*$. Let $\mathbb{B}$ be a Brownian bridge and
$\mathbb{B}^{(p)}$ the process defined by
$$
\mathbb{B}^{(p)}(u):=\frac{1}{p!}\,u^p\,\mathbb{B}(u)
\quad\text{for}\quad u\in[0,1].
$$
Let $\Phi(\cdot)$ be a  functional defined
on  the space $\mathcal{D}(\mathbb{R})$, satisfying a Lipschitz condition
$$
|\Phi(v)-\Phi(w)|\leq L \sup_{t\in\mathbb{R}} |v(t)-w(t)|.
$$
Assume further that the distribution of the r.v. $\Phi\big(\mathbb{B}^{(p)}(F(\cdot))\big)$
 has a bounded density. Then, as $n\to\infty$,
\begin{equation}\label{estimationfunctional}
\sup_{x\in\mathbb{R}}\left|\mathbb{P}\!\left\{\Phi\big(\alpha_n^{(p)}(\cdot)\big)
\leq x\right\} -\mathbb{P}\!\left\{\Phi\big(\mathbb{B}^{(p)}(F(\cdot))\big)\leq x\right\}\right|
=\mathcal{O}\!\left(\frac{\log n}{\sqrt{n}}\right)\!.
\end{equation}
\end{corollary}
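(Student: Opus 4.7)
The plan is to combine Theorem~\ref{lem2a} with a standard smoothing argument, using the Lipschitz hypothesis on $\Phi$ to transfer the uniform closeness of $\alpha_n^{(p)}$ to $\mathbb{B}_n^{(p)}(F(\cdot))$ into closeness of the distributions of $\Phi(\alpha_n^{(p)})$ and $\Phi(\mathbb{B}_n^{(p)}(F(\cdot)))$, then use the bounded density hypothesis to absorb the smoothing error. Note first that since $\mathbb{B}_n$ is a Brownian bridge, $\Phi(\mathbb{B}_n^{(p)}(F(\cdot)))$ has the same law as $\Phi(\mathbb{B}^{(p)}(F(\cdot)))$, so the approximating target is essentially fixed.

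Concretely, set $\Delta_n := \sup_{t\in\mathbb{R}} |\alpha_n^{(p)}(t) - \mathbb{B}_n^{(p)}(F(t))|$. By the Lipschitz assumption, $|\Phi(\alpha_n^{(p)}) - \Phi(\mathbb{B}_n^{(p)}(F(\cdot)))| \le L\Delta_n$. Hence for any threshold $\varepsilon_n > 0$,
\begin{align*}
\mathbb{P}\{\Phi(\alpha_n^{(p)}) \le x\}
&\le \mathbb{P}\{\Phi(\mathbb{B}_n^{(p)}(F(\cdot))) \le x + L\varepsilon_n\} + \mathbb{P}\{\Delta_n > \varepsilon_n\},
\end{align*}
and symmetrically for the lower bound. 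Writing $M$ for the bound on the density of $\Phi(\mathbb{B}^{(p)}(F(\cdot)))$, the smoothing step yields
$$
\mathbb{P}\{\Phi(\mathbb{B}^{(p)}(F(\cdot))) \le x + L\varepsilon_n\} - \mathbb{P}\{\Phi(\mathbb{B}^{(p)}(F(\cdot))) \le x\} \le M L \varepsilon_n,
$$
so overall
$$
\sup_x \bigl|\mathbb{P}\{\Phi(\alpha_n^{(p)}) \le x\} - \mathbb{P}\{\Phi(\mathbb{B}^{(p)}(F(\cdot))) \le x\}\bigr| \le M L \varepsilon_n + \mathbb{P}\{\Delta_n > \varepsilon_n\}.
$$

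It remains to choose $\varepsilon_n$ so that both terms are $\mathcal{O}(\log n/\sqrt{n})$. Applying Theorem~\ref{lem2a} with $x = c\log n$ for a constant $c$ to be tuned, we set $\varepsilon_n := (c_1 + c)\log n/\sqrt{n}$, which immediately makes the $ML\varepsilon_n$ term of the right order. The tail bound then becomes
$$
\mathbb{P}\{\Delta_n > \varepsilon_n\} \le B_p \sum_{k=2}^{p+1} \exp\!\bigl(-C_p (c\log n)^{2/k} n^{1-2/k}\bigr).
$$
For $k \ge 3$ each summand decays faster than any power of $n$, so these are negligible. The dominating term is $k=2$, which equals $n^{-C_p c}$; choosing $c \ge 1/(2C_p)$ makes this $\mathcal{O}(n^{-1/2}) = \mathcal{O}(\log n/\sqrt{n})$, completing the bound.

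The main (and only) subtlety is book-keeping the calibration between $x$ in Theorem~\ref{lem2a} and the smoothing width $\varepsilon_n$: one must verify that the $k=2$ exponential in the tail bound does not degrade to a slower rate than $\log n/\sqrt{n}$, which forces $c$ to be taken proportional to $1/C_p$ rather than arbitrary, but poses no genuine difficulty. The fact that higher $k$ terms give stretched-exponential decay in $n$ means they contribute nothing to the rate, so the argument reduces to essentially the same calibration as in the classical KMT corollary for $p=0$.
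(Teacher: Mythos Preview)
Your proposal is correct and follows essentially the same route as the paper's proof: both apply Theorem~\ref{lem2a} with $x=c\log n$ to control $\mathbb{P}\{\Delta_n>\varepsilon_n\}$, use the Lipschitz condition to transfer closeness of the processes to closeness of $\Phi$-values, and invoke the bounded density to absorb the $ML\varepsilon_n$ smoothing error. The only cosmetic difference is that the paper reaches the key inequality via the symmetric-difference bound $|\mathbb{P}(A)-\mathbb{P}(B)|\le\mathbb{P}(A\triangle B)$ (leading to $\mathbb{P}\{|\Phi(W_n)-x|\le L\Delta_n\}$) rather than your direct two-sided smoothing, and it takes $c$ large enough that the $k=2$ term is $O(1/n)$ rather than your $O(n^{-1/2})$; both choices yield the stated rate.
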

For more comments on this kind of results, we may refer to \cite{Kokoszka2000},
Corollary 1.1 and p.~2459.

By applying (\ref{estimation-a}) to $x=c'\log n$ for a suitable constant $c'$
and appealing to Borel-Cantelli lemma, one can obtain the following almost sure
approximation of the process $\big\{\alpha_n^{(p)}:n\in\mathbb{N}^*\big\}$ based on a
sequence of Brownian bridges.
\begin{corollary}\label{corollaryapprox}
The following bound holds, with probability~$1$, as $n\to\infty$:
\begin{equation}\label{approx}
\sup_{t\in\mathbb{R}} \left|\alpha_n^{(p)}(t)- \mathbb{B}_n^{(p)}(F(t))\right|
=\mathcal{O}\!\left(\frac{\log n}{\sqrt{n}}\right)\!.
\end{equation}
\end{corollary}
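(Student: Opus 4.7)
The plan is to apply the exponential inequality of Theorem~\ref{lem2a} with $x$ chosen proportional to $\log n$, and then upgrade the resulting tail bound to an almost-sure statement through the first Borel--Cantelli lemma. Since Theorem~\ref{lem2a} already delivers a coupling of $\alpha_n^{(p)}$ and $\mathbb{B}_n$ on a common suitable probability space, no extra construction is needed.

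First, I would set $x = c' \log n$ in~(\ref{estimation-a}) for a constant $c'>0$ to be selected later. The threshold on the left-hand side of~(\ref{estimation-a}) then becomes
$$
\frac{1}{\sqrt{n}}\,(c_1 \log n + c' \log n) = \frac{(c_1 + c')\,\log n}{\sqrt{n}},
$$
which is precisely the rate claimed in~(\ref{approx}). The resulting tail bound is
$$
B_p \sum_{k=2}^{p+1}\exp\!\bigl(-C_p\,(c'\log n)^{2/k}\,n^{1-2/k}\bigr).
$$

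Next, I would analyse each term of the sum as a function of $n$. For $k=2$ the exponent simplifies to $-C_p c' \log n$, giving a term of order $n^{-C_p c'}$. For $k \geq 3$ the factor $n^{1-2/k}$ with $1-2/k>0$ makes the exponent grow faster than any multiple of $\log n$, so those terms are super-polynomially small and trivially summable. Hence the entire right-hand side is dominated, up to a constant, by $n^{-C_p c'}$.

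Finally, I would fix $c'$ large enough that $C_p c' > 1$ (e.g.\ $c' = 2/C_p$). Then the series
$$
\sum_{n\geq 1} \mathbb{P}\!\left\{\sup_{t\in\mathbb{R}}\bigl|\alpha_n^{(p)}(t)-\mathbb{B}_n^{(p)}(F(t))\bigr| \geq \frac{(c_1+c')\log n}{\sqrt{n}}\right\}
$$
converges, so the first Borel--Cantelli lemma implies that with probability one the event inside the supremum holds for only finitely many $n$. This yields~(\ref{approx}) with an explicit (though unspecified) constant. The only mildly delicate point is to confirm that the $k=2$ term in the sum is the dominant one and that the condition ``$x$ large enough'' in Theorem~\ref{lem2a} is eventually satisfied when $x = c'\log n$; both are immediate once $n$ is taken large. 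I do not anticipate any substantive obstacle beyond this bookkeeping.
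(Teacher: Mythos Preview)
Your proposal is correct and follows essentially the same route as the paper: apply Theorem~\ref{lem2a} with $x=c'\log n$, observe that the $k=2$ term governs the tail and decays like $n^{-C_pc'}$, choose $c'$ large enough to make the bound summable, and conclude via Borel--Cantelli. Your discussion of the dominant term and of the ``$x$ large enough'' proviso is, if anything, slightly more explicit than the paper's own write-up.
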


The next result yields an almost sure approximation for
$\big\{\alpha_n^{(p)}:n\in\mathbb{N}^*\big\}$ based on a Kiefer process.
\begin{theorem}\label{kieferapproximation}
On a suitable probability space, we may define the $p$-fold integrated
empirical process $\big\{\alpha_n^{(p)}:n\in\mathbb{N}^*\big\}$, in combination with
a Kiefer process $\{\mathbb{K}(s,u): s\geq 0, u\in[0,1]\}$, such that,
with probability~$1$, as $n\to\infty$,
$$
\max_{1\leq k\leq n}\sup_{t\in\mathbb{R}}\left|\sqrt{k}\,\alpha_k^{(p)}(t)
-\mathbb{K}^{(p)}(k,F(t))\right|=\mathcal{O}\!\left((\log n)^2\right)
$$
where $\mathbb{K}^{(p)}$ is the process defined by
$$
\mathbb{K}^{(p)}(s,u):=\frac{1}{p!}\,u^p\,\mathbb{K}(s,u)
\quad\text{for}\quad s\geq 0,\,u\in[0,1].
$$
\end{theorem}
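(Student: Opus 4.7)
The plan is to reduce the statement to the classical \emph{sequential} KMT approximation for the uniform empirical process, namely the fact that on a suitably enriched probability space one can construct a Kiefer process $\mathbb{K}$ with
$$\max_{1\leq k\leq n}\sup_{u\in[0,1]}\bigl|\sqrt{k}\,\beta_k(u)-\mathbb{K}(k,u)\bigr|=\mathcal{O}\bigl((\log n)^2\bigr)\quad\text{a.s.}$$
(cf.\ \cite{Csorgo1981REVESZ}, Ch.~4), and then to linearise $\alpha_k^{(p)}(t)=\beta_k^{(p)}(F(t))$ around the unintegrated process. The explicit representation in Proposition~\ref{expFp} is the key tool for this linearisation, and the construction of $\mathbb{K}$ will be the one delivered by the sequential KMT theorem, so no further enlargement of the probability space is required.

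The algebraic step would go as follows. Expanding the binomial coefficient in (\ref{repFnp1}) gives
$$\mathbb{F}_k^{(p)}(t)=\frac{1}{(p+1)!}\prod_{j=0}^{p}\!\left(\mathbb{F}_k(t)+\tfrac{j}{k}\right)=\frac{\mathbb{F}_k(t)^{p+1}}{(p+1)!}+\frac{1}{k}\,R_k(t),$$
with $R_k(t)$ a polynomial in $\mathbb{F}_k(t)$ of degree $\leq p$ whose coefficients depend only on $p$, so that $|R_k(t)|\leq C_p$ uniformly in $k$ and $t$. A Taylor expansion of $x^{p+1}$ around $x=F(t)$ yields
$$\frac{\mathbb{F}_k(t)^{p+1}-F(t)^{p+1}}{(p+1)!}=\frac{F(t)^p}{p!}\bigl(\mathbb{F}_k(t)-F(t)\bigr)+\frac{1}{(p+1)!}\sum_{j=2}^{p+1}\binom{p+1}{j}\bigl(\mathbb{F}_k(t)-F(t)\bigr)^j F(t)^{p+1-j}.$$
Multiplying by $k$ and using $k(\mathbb{F}_k(t)-F(t))=\sqrt{k}\,\beta_k(F(t))$, I obtain the decomposition
$$\sqrt{k}\,\alpha_k^{(p)}(t)=\frac{F(t)^p}{p!}\sqrt{k}\,\beta_k(F(t))+E_k(t),$$
where $E_k(t)$ collects the quadratic and higher order terms plus the $\mathcal{O}(1)$ contribution from $R_k(t)$; its generic term is bounded by $k^{1-j/2}|\alpha_k(t)|^j$ for $2\leq j\leq p+1$.

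The main term coincides, up to the KMT-Kiefer error, with $\mathbb{K}^{(p)}(k,F(t))=\tfrac{F(t)^p}{p!}\mathbb{K}(k,F(t))$, contributing $\mathcal{O}((\log n)^2)$ uniformly. For $E_k(t)$, I would invoke a uniform a.s.\ bound $\sup_t|\alpha_k(t)|=\mathcal{O}(\sqrt{\log\log k})$ (Chung-Smirnov LIL, or equivalently a consequence of Corollary~\ref{corollaryapprox}), so the $j$-th term is at most $k^{1-j/2}(\log\log k)^{j/2}$. The worst case $j=2$ yields $\log\log k$, and the remaining cases contribute negative powers of $k$ times a polylog; hence $\max_{1\leq k\leq n}\sup_t|E_k(t)|=\mathcal{O}(\log\log n)$, which is strictly smaller than $(\log n)^2$ and therefore absorbed. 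The main obstacle is precisely the joint uniformity in $k\in\{1,\dots,n\}$ and $t\in\mathbb{R}$: using the sequential Kiefer approximation (rather than the independent Brownian bridges of Theorem~\ref{lem2a}) is essential to couple the Gaussian surrogates across $k$, and the quadratic remainder must be controlled through a global LIL bound on $\sup_t|\alpha_k(t)|$ rather than through the Taylor remainder $(\mathbb{F}_k-F)^2$, since the latter would not be summable uniformly in $k$.
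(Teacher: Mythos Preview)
Your proposal is correct and follows essentially the same approach as the paper. The paper likewise writes $\sqrt{k}\,\alpha_k^{(p)}(t)-\mathbb{K}^{(p)}(k,F(t))$ as $\tfrac{1}{p!}F(t)^p\bigl[\sqrt{k}\,\alpha_k(t)-\mathbb{K}(k,F(t))\bigr]$ plus a polynomial remainder in $\alpha_k(t)$ of degree $\geq 2$ and a bounded term (using the precomputed identity~(\ref{alphanp}) rather than re-expanding the binomial coefficient), then invokes the sequential KMT--Kiefer approximation for the linear part and Chung's LIL $\sup_t|\alpha_k(t)|=\mathcal{O}(\sqrt{\log\log k})$ to show the remainder is $\mathcal{O}\bigl((\log\log n)^{(p+1)/2}\bigr)=o\bigl((\log n)^2\bigr)$.
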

Let us mention that the ``extracted'' Kiefer process $\{\mathbb{K}(n,u):
n\in\mathbb{N}^*, u\in[0,1]\}$ may be viewed as the partial sums process of a
sequence of independent Brownian bridges $\left\{\mathbb{B}_i:i\in\mathbb{N}^*\right\}$:
$$
\mathbb{K}(n,u)=\sum_{i=1}^n \mathbb{B}_i(u)
\quad\text{for}\quad n\in\mathbb{N}^*,\, u\in[0,1].
$$

From Theorem~\ref{kieferapproximation}, we deduce the following law of iterated
logarithm (``a.s.'' stands for ``almost surely'').
\begin{corollary}\label{corollarylli}
We have the following law of iterated logarithm for the $p$-fold integrated
empirical process:
\begin{equation}\label{lli}
\limsup_{n\to\infty}\frac{\sup_{t\in\mathbb{R}}\big|\alpha_n^{(p)}(t)\big|}{\sqrt{\log \log n}}
=\frac{(p+1/2)^{p+1/2}}{p!\,(p+1)^{p+1}}\quad\text{a.s.}
\end{equation}
\end{corollary}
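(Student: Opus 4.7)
The plan is to combine the Kiefer strong approximation of Theorem~\ref{kieferapproximation} with a functional law of the iterated logarithm for partial sums of Brownian bridges in $C[0,1]$.

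First, Theorem~\ref{kieferapproximation} yields, almost surely,
\[
\sup_{t\in\mathbb{R}}\bigl|\sqrt{n}\,\alpha_n^{(p)}(t)-\mathbb{K}^{(p)}(n,F(t))\bigr|=\mathcal{O}\bigl((\log n)^2\bigr).
\]
Since $(\log n)^2=o\bigl(\sqrt{n\log\log n}\bigr)$, dividing by $\sqrt{n\log\log n}$ absorbs this error. By continuity of $F$ its range is dense in $[0,1]$, and $u\mapsto u^p\mathbb{K}(n,u)$ is continuous on $[0,1]$ with vanishing boundary values, so $\sup_{t\in\mathbb{R}}|\mathbb{K}^{(p)}(n,F(t))|=\sup_{u\in[0,1]}u^p|\mathbb{K}(n,u)|/p!$. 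Consequently (\ref{lli}) reduces to
\[
\limsup_{n\to\infty}\frac{\sup_{u\in[0,1]}u^p|\mathbb{K}(n,u)|}{p!\,\sqrt{n\log\log n}}=\frac{(p+1/2)^{p+1/2}}{p!\,(p+1)^{p+1}}\quad\text{a.s.}
\]

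Next, I would write $\mathbb{K}(n,\cdot)=\sum_{i=1}^n\mathbb{B}_i(\cdot)$ as a partial sum of i.i.d.\ Brownian bridges, regarded as centered Gaussian elements of $C[0,1]$, and invoke Strassen's functional LIL: $\mathbb{K}(n,\cdot)/\sqrt{2n\log\log n}$ is almost surely relatively compact in $C[0,1]$ with cluster set equal to the unit ball $K_B$ of the Brownian-bridge reproducing kernel Hilbert space, that is, the set of absolutely continuous $f:[0,1]\to\mathbb{R}$ with $f(0)=f(1)=0$ and $\int_0^1 f'(s)^2\,ds\le 1$. A short Lagrange-multiplier calculation yields $\sup_{f\in K_B}|f(u_0)|=\sqrt{u_0(1-u_0)}$. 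Applied to the continuous functional $f\mapsto \sup_u g(u)|f(u)|$ on $C[0,1]$, this gives, for any continuous weight $g\ge 0$,
\[
\limsup_{n\to\infty}\frac{\sup_{u\in[0,1]}g(u)|\mathbb{K}(n,u)|}{\sqrt{2n\log\log n}}=\sup_{u\in[0,1]}g(u)\sqrt{u(1-u)}\quad\text{a.s.}
\]
Finally, with $g(u)=u^p/p!$, the function $\varphi(u)=u^{p+1/2}(1-u)^{1/2}/p!$ reaches its maximum at $u^*=(p+1/2)/(p+1)$, where $1-u^*=1/(2(p+1))$, giving $\varphi(u^*)=(p+1/2)^{p+1/2}/\bigl(\sqrt{2}\,p!\,(p+1)^{p+1}\bigr)$. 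Multiplying by $\sqrt{2}$ to pass from the $\sqrt{2n\log\log n}$-normalization to the $\sqrt{n\log\log n}$-normalization recovers exactly the constant of~(\ref{lli}).

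The main obstacle is the upper bound in the functional LIL, namely the identification of $K_B$ as the cluster set in $C[0,1]$; this rests on Strassen's theorem for Banach-space-valued i.i.d.\ Gaussian sums, which is standard in this setting but worth citing carefully. For a self-contained alternative, the lower bound follows immediately from the scalar LIL applied to the Gaussian random walk $\mathbb{K}(n,u^*)=\sum_{i=1}^n\mathbb{B}_i(u^*)$ of variance $nu^*(1-u^*)$, while the upper bound can be obtained by chaining, covering $[0,1]$ by a geometric grid, controlling oscillations through Gaussian maximal inequalities together with the modulus of continuity of the bridge, and concluding by Borel--Cantelli along subsequences $n_k=\lfloor\theta^k\rfloor$.
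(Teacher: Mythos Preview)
Your proof is correct and follows essentially the same route as the paper: reduce to the Kiefer process via Theorem~\ref{kieferapproximation}, apply a Strassen-type LIL, and maximize $u^{2p+1}(1-u)$. The only cosmetic difference is that the paper invokes the LIL for Gaussian processes directly (Oodaira~1973 / Corollary~1.15.1 of Cs\"org\H{o}--R\'ev\'esz), obtaining $\limsup=\frac{\sqrt{2}}{p!}\sup_{u}\sqrt{\mathrm{Var}(u^p\mathbb{K}(1,u))}$ in one line, whereas you phrase it via the functional LIL for i.i.d.\ $C[0,1]$-valued Gaussian summands and the RKHS unit ball; the two formulations are equivalent and lead to the same optimization and constant.
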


As a direct application of (\ref{approx}) and (\ref{lli}) to the problem of
goodness-of-fit, for testing the null hypothesis
$$
\mathcal{H}_0: F=F_0,
$$
we can use the following statistics: the \emph{$p$-fold integrated
Kolmogorov-Smirnov statistic}
$$
\mathbf{S}_n^{(p)}:=\sup_{t\in\mathbb{R}}
\left|\sqrt{n}\left(\mathbb{F}_n^{(p)}(t)-F_0^{(p)}(t)\right)\right|
$$
as well as the \emph{$p$-fold integrated Cram\'er-von Mises statistic}
$$
\mathbf{T}_n^{(p)}:=n\int_{\mathbb{R}} \left(\mathbb{F}_n^{(p)}(t)
-F_0^{(p)}(t)\right)^{\!2} dF_0(t).
$$
\begin{corollary}\label{ecooooo}
Under $\mathcal{H}_0$, with probability~$1$, as $n\to\infty$, we have
\begin{align}
\left|\mathbf{S}_n^{(p)}-\sup_{t\in\mathbb{R}}\big|\mathbb{B}_n^{(p)}(F_0(t))\big|\right|
&
=\mathcal{O}\!\left(\frac{\log n}{\sqrt{n}}\right)\!,
\label{kol}\\[1ex]
\left|\mathbf{T}_n^{(p)}-\int_{\mathbb{R}}\big[\mathbb{B}_n^{(p)}(F_0(t))\big]^2\,dF_0(t)\right|
&
=\mathcal{O}\!\left(\!\sqrt{\frac{\log\log n}{n}}\,\log n\right)\!.
\label{vonmises}
\end{align}
\end{corollary}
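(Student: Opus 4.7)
The plan is to derive both bounds as a fairly direct packaging of the preceding approximation and LIL results. Throughout, assume we are on the suitable probability space of Corollary~\ref{corollaryapprox} and Corollary~\ref{corollarylli}, so that the coupling $\{\mathbb{B}_n\}$ is simultaneously the one witnessing (\ref{approx}) and (\ref{lli}). Under $\mathcal{H}_0$ we have $F=F_0$, hence $\alpha_n^{(p)}(t)=\sqrt{n}\bigl(\mathbb{F}_n^{(p)}(t)-F_0^{(p)}(t)\bigr)$, so that $\mathbf{S}_n^{(p)}=\sup_{t\in\mathbb{R}}|\alpha_n^{(p)}(t)|$ and $\mathbf{T}_n^{(p)}=\int_{\mathbb{R}}[\alpha_n^{(p)}(t)]^2\,dF_0(t)$.

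For (\ref{kol}) I would apply the reverse triangle inequality for suprema,
$$
\Bigl|\mathbf{S}_n^{(p)}-\sup_{t\in\mathbb{R}}\bigl|\mathbb{B}_n^{(p)}(F_0(t))\bigr|\Bigr|
\le \sup_{t\in\mathbb{R}}\bigl|\alpha_n^{(p)}(t)-\mathbb{B}_n^{(p)}(F_0(t))\bigr|,
$$
and conclude immediately by invoking (\ref{approx}).

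For (\ref{vonmises}) I would write $a^2-b^2=(a-b)(a+b)$ under the integral with $a(t)=\alpha_n^{(p)}(t)$ and $b(t)=\mathbb{B}_n^{(p)}(F_0(t))$, which gives
$$
\biggl|\mathbf{T}_n^{(p)}-\int_{\mathbb{R}}\bigl[\mathbb{B}_n^{(p)}(F_0(t))\bigr]^2\,dF_0(t)\biggr|
\le \sup_{t\in\mathbb{R}}|a(t)-b(t)|\cdot\biggl(\sup_{t\in\mathbb{R}}|a(t)|+\sup_{t\in\mathbb{R}}|b(t)|\biggr),
$$
using that $F_0$ is a probability measure. The first factor is $\mathcal{O}(\log n/\sqrt{n})$ by (\ref{approx}). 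For the second, (\ref{lli}) yields $\sup_t|\alpha_n^{(p)}(t)|=\mathcal{O}(\sqrt{\log\log n})$ a.s., and then a further triangle inequality with (\ref{approx}) transfers the same order to $\sup_t|\mathbb{B}_n^{(p)}(F_0(t))|$. Multiplying the two bounds gives $\mathcal{O}\!\left((\log n/\sqrt{n})\sqrt{\log\log n}\right)=\mathcal{O}\!\left(\sqrt{\log\log n/n}\,\log n\right)$, as claimed.

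There is no serious obstacle here: the hard analytic work has already been absorbed into Theorem~\ref{lem2a}, Corollary~\ref{corollaryapprox} and Corollary~\ref{corollarylli}. The only point deserving a moment of care is to make sure that the LIL bound on $\sup_t|\mathbb{B}_n^{(p)}(F_0(t))|$ is not obtained by appealing to a classical LIL for $\{\mathbb{B}_n\}$ (whose joint distribution across $n$ is not specified), but rather, as above, by comparison with $\alpha_n^{(p)}$ on the same coupling; this is what justifies combining (\ref{approx}) and (\ref{lli}) without ambiguity.
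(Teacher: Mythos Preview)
Your proof is correct and follows essentially the same route as the paper: the reverse triangle inequality for (\ref{kol}), and the factorization $a^2-b^2=(a-b)(a+b)$ under the integral for (\ref{vonmises}), combined with (\ref{approx}) and (\ref{lli}). The only cosmetic difference is that the paper bounds $\sup_{t}\bigl|\mathbb{B}_n^{(p)}(F_0(t))\bigr|\le\sup_{u\in[0,1]}|\mathbb{B}_n(u)|$ and then invokes the pre-packaged estimate (\ref{estimbridge}) (itself obtained exactly via the coupling argument you describe), whereas you derive the $\mathcal{O}(\sqrt{\log\log n})$ bound on $\sup_t|\mathbb{B}_n^{(p)}(F_0(t))|$ directly from (\ref{approx}) and (\ref{lli}); your closing caveat about not appealing to an unspecified joint law of $\{\mathbb{B}_n\}$ is precisely the point behind (\ref{estimbridge}).
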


We finish this part by pointing out the possibility of considering the statistics, for $r\geq 1$,
$$
\omega_{n,p,r}=\sqrt{n}\left(\int_{\mathbb{R}}
\left|\mathbb{F}_n^{(p)}(t)-F_0^{(p)}(t)\right|^r dF_0(t)\right)^{\!1/r}\!.
$$
It is clear, however, that we have the following convergence in distribution as $n\to \infty$,
under $\mathcal{H}_{0}$:
$$
\sqrt{n}\left(\int_{\mathbb{R}}
\left|\mathbb{F}_n^{(p)}(t)-F_0^{(p)}(t)\right|^r dF_0(t)\right)^{\!1/r}\longrightarrow
\left(\int_{\mathbb{R}}\left|\mathbb{B}^{(p)}(F_0(t))\right|^rdF_0(t)\right)^{\!1/r}\!.
$$
Denoting by $X_{(1)} \leq X_{(2)}\leq  \cdots \leq X_{(n)}$ the order statistics of
$X_{1},X_{2},\dots,X_{n}$, and putting $X_{(0)} = -\infty, X_{(n+1)} =+\infty$,
straightforward manipulations of integrals yield the alternative representation,
for $p=0$ and $r\geq 1$,
\begin{align*}
\omega_{n,0,r}=&\;\frac{\sqrt{n}}{(r+1)^{1/r}}\left\{\sum_{i=1}^{n+1}\left[\left(F_0(X_{(i)})
-\frac{i-1}{n}\right)\left|F_0(X_{(i)})-\frac{i-1}{n}\right|^{r}\right.\right.\\
&-\left.\left.\left(F_0(X_{(i-1)})-\frac{i-1}{n}\right)\left|F_0(X_{(i-1)})
-\frac{i-1}{n}\right|^{r}\right]\right\}^{1/r}.
\end{align*}
In a future research, it would be of interest to deeply investigate such statistics.

\section{Weighted approximations}\label{section2-1}
In this part, we consider the weighted difference
$\big|\beta_n^{(p)}(u)-\mathbb{B}_n^{(p)}(u)\big|/\big(u(1-u)\big)^{1/2-\nu}$
for a power $\nu\in[0,1/2)$. Since this quantity is not defined at $0$ and $1$,
we will work with the supremum on an interval of the form $[d/n,1-d/n]$ for
$d,n\in\mathbb{N}^*$ such that $d\leq n/2, {n\geq 2}$. The weighted approximations
for the classical uniform empirical process
were deeply studied in the papers by
\cite{CsCsHoMa1986},  \cite{HaeuslerMaosn1999}, \cite{Mason1991},
\cite{Mason1999} and \cite{MasonvanZwet1987}.
For more details on the subject we may refer to  \cite{Csorgoho1993}.

We state here the analogue of Theorem 1.2 of \cite{Mason1999}.
\begin{theorem}\label{theweithed1}
Fix $p\in\mathbb{N}^*$. On a suitable probability space, we may define the
$p$-fold uniform integrated empirical process $\big\{\beta_n^{(p)}:n\in\mathbb{N}^*\big\}$,
in combination with a sequence of Brownian bridges
$\left\{\mathbb{B}_n:n\in\mathbb{N}^*\right\}$ and a positive constant $C_p'$,
such that, for every $\nu\in[0,1/2)$, there exist a positive constant
$B_{p,\nu}$ for which we have, for any $d,n\in\mathbb{N}^*$ satisfying
$d\leq n/2$ and large enough $x$,
\begin{equation}
\mathbb{P}\bigg\{\sup_{u\in[d/n,1-d/n]}\frac{{n^{\nu}}\big|\beta_n^{(p)}(u)
-\mathbb{B}_n^{(p)}(u)\big|}{\big(u(1-u)\big)^{{1/2-}\nu}}\geq x\bigg\}
\leq B_{p,\nu} \sum_{k=2}^{p+1} \exp\!\left(-C_p'\,x^{2/k}d^{2\nu/k}n^{1-(1+2\nu)/k}\right)\!.
\label{ineg-Delta1}
\end{equation}
\end{theorem}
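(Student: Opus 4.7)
The plan is to reduce Theorem~\ref{theweithed1} to the weighted KMT-type approximation for $\beta_n$ recalled above (Theorem~1.2 of \cite{Mason1999}), via a polynomial expansion of $\beta_n^{(p)}$ together with a weighted exponential tail for $\beta_n$ to handle the resulting nonlinear remainders. Starting from the explicit formula $\mathbb{U}_n^{(p)}(u)=\binom{n\mathbb{U}_n(u)+p}{p+1}/n^{p+1}$ supplied by Proposition~\ref{expFp} and substituting $n\mathbb{U}_n(u)=nu+\sqrt{n}\,\beta_n(u)$, a multinomial expansion yields
$$
\beta_n^{(p)}(u)-\mathbb{B}_n^{(p)}(u)=\frac{u^p}{p!}\bigl(\beta_n(u)-\mathbb{B}_n(u)\bigr)+\rho_n(u)+\sum_{k=2}^{p+1}r_{n,k}^{(p)}(u),
$$
where $\rho_n(u)$ collects all contributions that are either purely deterministic or linear in $\beta_n$ but carry at least one extra factor of $1/n$, and each stochastic remainder satisfies $|r_{n,k}^{(p)}(u)|\leq C_{p,k}\,u^{p+1-k}\,n^{(1-k)/2}\,|\beta_n(u)|^k$ on $u\in[0,1]$. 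Setting $Y:=\sup_{u\in[d/n,1-d/n]}|\beta_n(u)|/(u(1-u))^{1/2-\nu}$, a direct check shows that $n^\nu|\rho_n(u)|/(u(1-u))^{1/2-\nu}$ is $O\bigl(n^{\nu-1}(1+Y)\bigr)$ on $[d/n,1-d/n]$, so this contribution is easily absorbed.

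For the leading linear part, the bound $u^p/p!\leq 1$ on $[d/n,1-d/n]$ reduces matters to the weighted supremum of $|\beta_n-\mathbb{B}_n|/(u(1-u))^{1/2-\nu}$, to which I would directly apply the cited Theorem~1.2 of \cite{Mason1999}; the resulting exponential tail is dominated by, and thus absorbed into, the $k=2$ summand of the right-hand side of~\eqref{ineg-Delta1} for large $x$. For each nonlinear remainder the key pointwise estimate is
$$
\frac{n^\nu\,|r_{n,k}^{(p)}(u)|}{(u(1-u))^{1/2-\nu}}\leq C_{p,k}\,n^{\nu+(1-k)/2}\,u^{p+1-k}(u(1-u))^{(k-1)(1/2-\nu)}\,Y^k\leq C_{p,k}\,n^{\nu+(1-k)/2}\,Y^k,
$$
where the residual powers $u^{p+1/2-k/2-(k-1)\nu}(1-u)^{(k-1)(1/2-\nu)}$ are nonnegative for $k\in\{2,\ldots,p+1\}$ and $\nu\in[0,1/2)$, hence bounded by~$1$. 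Requiring the left-hand side to exceed $x$ therefore forces $Y\geq c_p\,x^{1/k}\,n^{(k-1-2\nu)/(2k)}$.

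The final step combines this threshold with a sharp weighted Bennett-type inequality for $Y$, obtainable by dyadically chaining Bernstein's inequality for $\beta_n$ across $[d/n,1-d/n]$ in the spirit of \cite{CsCsHoMa1986} and \cite{MasonvanZwet1987}; after tracking the exponents, this produces a tail of the required form $\exp\bigl(-C_p'\,x^{2/k}\,d^{2\nu/k}\,n^{1-(1+2\nu)/k}\bigr)$. Summing over $k\in\{2,\ldots,p+1\}$ and adding the contributions of the linear part and of $\rho_n$ yields~\eqref{ineg-Delta1}. The main obstacle will be this last calibration: to produce the precise $d^{2\nu/k}$ factor in the exponent (rather than a coarser $d^{2\nu}$) one needs the full Bennett tail of $\beta_n$, including its Poisson regime near the endpoint $u=d/n$; the $k$-th-root rescaling inherent in bounding $Y^k$ is exactly what converts the Gaussian-type squared rate into the required $x^{2/k}$ and $d^{2\nu/k}$ exponents, in precise analogy with the base argument of \cite{Mason1999} for the unweighted case.
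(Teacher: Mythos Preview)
Your route differs from the paper's and the calibration you flag as the ``main obstacle'' is in fact a genuine gap. The paper does not re-decompose $\beta_n^{(p)}-\mathbb{B}_n^{(p)}$ here at all; it quotes Theorem~\ref{lem2b} (the unweighted bound~\eqref{estimation-b}, in which the nonlinear remainders have already been absorbed via DKW) as a black box, slices $[d/n,1]$ into $[i/n,(i+1)/n]$ for $i=d,\dots,n-1$, and applies~\eqref{estimation-b} on $[0,(i+1)/n]$ with the threshold scaled by $(i/n)^{\nu}$. The factor $i^{\nu}$ in the threshold becomes $i^{2\nu/k}$ in the exponent after the $2/k$-th power, and the restriction $i\geq d$ in the sum is exactly what produces $d^{2\nu/k}$.

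Your bound via the weighted supremum $Y=\sup_{[d/n,1-d/n]}|\beta_n(u)|/(u(1-u))^{1/2-\nu}$ cannot recover this factor. Once you control the $k$-th remainder by $C\,n^{\nu+(1-k)/2}Y^k$ (having bounded the residual powers of $u$ and $1-u$ by~$1$), all $d$-dependence is lost: the point $u=1/2$ lies in the range for every admissible $d$, and there $|\beta_n(1/2)|/(1/4)^{1/2-\nu}$ already has a sub-Gaussian tail $\exp(-cz^2)$ independent of $d$ and $n$. No dyadic chaining over $[d/n,1-d/n]$ improves this, because the bulk contribution saturates the bound; the Poisson regime near $u=d/n$ is not the bottleneck. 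Plugging your threshold $z\asymp x^{1/k}n^{1/2-(1+2\nu)/(2k)}$ therefore gives only $\exp(-c\,x^{2/k}n^{1-(1+2\nu)/k})$, which for $d>1$ is strictly larger than the claimed $\exp(-C_p'\,x^{2/k}d^{2\nu/k}n^{1-(1+2\nu)/k})$. The cure is to keep the weight factor separate rather than absorbing it into $Y$ --- bound $|\beta_n(u)|^k$ by the unweighted supremum and let the leftover weight contribute a $(d/n)$-type factor to the threshold --- but carrying this out amounts to the paper's slicing argument with Theorem~\ref{lem2b} unpacked rather than quoted.
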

This theorem may be proved by appealing to Theorem~\ref{lem2b}.
Applying (\ref{ineg-Delta1}) with $x=c''(\log n)/n^{1/2-\nu}$
for a suitable constant $c''$ yields the result below.
\begin{corollary}\label{corollary-O}
On the same probability space of Theorem \ref{theweithed1}, we have,
for any $\nu\in[0,1/2)$ and any $d\in\mathbb{N}^*$, with probability $1$,
as $n\to\infty$,
$$
\sup_{u\in[d/n,1-d/n]}\frac{\big|\beta_n^{(p)}(u)-\mathbb{B}_n^{(p)}(u)\big|}{\big(u(1-u)\big)^{\nu}}
=\mathcal{O}\!\left(\frac{\log n}{n^{1/2-\nu}}\right)\!.
$$
\end{corollary}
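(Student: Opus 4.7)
The plan is to transport the weighted tail bound of Theorem~\ref{theweithed1} to an almost-sure rate via the Borel--Cantelli lemma. The key observation is that the weight $(u(1-u))^{\nu}$ occurring in the corollary corresponds, via the substitution $\nu \leftrightarrow 1/2-\nu$, to the weight $(u(1-u))^{1/2-\nu}$ appearing in the theorem. Invoking Theorem~\ref{theweithed1} with its parameter equal to $1/2-\nu$ (admissible for $\nu\in(0,1/2)$) converts inequality (\ref{ineg-Delta1}) into
\[
\mathbb{P}\!\left\{\sup_{u\in[d/n,1-d/n]}\frac{n^{1/2-\nu}\,\big|\beta_n^{(p)}(u)-\mathbb{B}_n^{(p)}(u)\big|}{\big(u(1-u)\big)^{\nu}}\geq x\right\}\leq B_{p,1/2-\nu}\sum_{k=2}^{p+1}\exp\!\Bigl(-C_p'\,x^{2/k}\,d^{(1-2\nu)/k}\,n^{(k-2+2\nu)/k}\Bigr).
\]

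Next I would set $x = c''\log n$ for a suitable constant $c''$. For each $k\in\{2,\dots,p+1\}$, the exponent $(k-2+2\nu)/k$ of $n$ is positive (equal to $\nu$ for the worst case $k=2$), so the $k$-th term decays faster than any negative power of $n$: for $k=2$, the term is $\exp\!\bigl(-C_p'\,c''\,d^{1/2-\nu}(\log n)\,n^{\nu}\bigr)$, which is summable for any $c''>0$ as soon as $\nu>0$; for $k\geq 3$ the decay is even stronger. By Borel--Cantelli, the event above fails almost surely for all $n$ large, and dividing through by $n^{1/2-\nu}$ yields the announced rate.

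Finally, the boundary case $\nu=0$ falls outside this substitution, since it would require the theorem at parameter $1/2$. However, $\nu=0$ reduces to $\sup_{u\in[d/n,1-d/n]}\big|\beta_n^{(p)}-\mathbb{B}_n^{(p)}\big|=\mathcal{O}(\log n/\sqrt n)$ a.s., which is a direct consequence of Corollary~\ref{corollaryapprox} applied with $F$ the identity (equivalently, of Theorem~\ref{lem2b} with $d=n$). I expect no genuine obstacle beyond routine bookkeeping; the only care needed is the summability check in the second step, which is immediate once one tracks the $n^\nu$ factor in the dominant $k=2$ exponent.
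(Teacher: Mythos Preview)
Your proposal is correct and follows the same route as the paper: exponential tail bound from Theorem~\ref{theweithed1}, choice $x\asymp\log n$, then Borel--Cantelli. The reparametrization $\nu\mapsto 1/2-\nu$ and the separate handling of $\nu=0$ via Corollary~\ref{corollaryapprox} are cosmetic differences arising from the way Theorem~\ref{theweithed1} is stated; the paper applies (\ref{ineg-Delta1}) directly to $\Delta_{n,\nu}^{(p)}(d)$ (the weight $(u(1-u))^\nu$ being the one actually used in the proof of Theorem~\ref{theweithed1}) and thereby treats all $\nu\in[0,1/2)$ at once.
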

By using Theorem \ref{theweithed1} we have the following proposition.
\begin{corollary}
On the same probability space of Theorem \ref{theweithed1}, for all $0\leq \nu < 1/2$ there exists a constant $\gamma>0 $ such that
$$
\sup_{n\geq 2}\mathbb{E}\left\{\exp\left(\gamma \sup_{u\in[1/n,1-1/n]}\frac{{n^{\nu}}\big|\beta_n^{(p)}(u)
-\mathbb{B}_n^{(p)}(u)\big|}{\big(u(1-u)\big)^{{1/2-}\nu}}\right)\right\}<\infty.
$$

\end{corollary}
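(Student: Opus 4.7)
The plan is to integrate the tail inequality of Theorem~\ref{theweithed1} applied with $d=1$. Set
$$
Z_n := \sup_{u\in[1/n,\,1-1/n]}\frac{n^{\nu}\,|\beta_n^{(p)}(u)-\mathbb{B}_n^{(p)}(u)|}{\bigl(u(1-u)\bigr)^{1/2-\nu}}.
$$
The choice $d=1$ is admissible as soon as $n\geq 2$, and Theorem~\ref{theweithed1} then furnishes a threshold $x_{0}>0$ and positive constants $B_{p,\nu},C_{p}'$ such that, for every $n\geq 2$ and every $x\geq x_{0}$,
$$
\mathbb{P}(Z_n\geq x)\leq B_{p,\nu}\sum_{k=2}^{p+1}\exp\!\bigl(-C_{p}'\,x^{2/k}\,n^{(k-1-2\nu)/k}\bigr).
$$

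I would then invoke the layer-cake identity
$\mathbb{E}[\exp(\gamma Z_n)]=1+\int_{0}^{\infty}\gamma e^{\gamma x}\,\mathbb{P}(Z_n\geq x)\,dx$
and split the range at $x_{0}$. The contribution of $[0,x_{0}]$ is trivially bounded by $e^{\gamma x_{0}}$, uniformly in $n$. On $[x_{0},\infty)$ I would dominate the tail by the sum above and estimate each summand separately.

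The decisive term is $k=2$, which yields the honest exponential tail $\exp(-C_{p}'\,n^{(1-2\nu)/2}\,x)$. Since $n^{(1-2\nu)/2}\geq 2^{(1-2\nu)/2}$ for all $n\geq 2$, choosing $\gamma\in\bigl(0,\,C_{p}'\,2^{(1-2\nu)/2}\bigr)$ makes the integral $\int_{x_{0}}^{\infty}\gamma e^{(\gamma-C_{p}'\,n^{(1-2\nu)/2})\,x}\,dx$ uniformly finite in $n$; this already fixes the admissible range of $\gamma$.

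For the residual terms $k\geq 3$, whose pure $x$-tail $\exp(-C x^{2/k})$ is subexponential, I would exploit the $n$-dependence of the exponent essentially. Optimising $x\mapsto \gamma x-C_{p}'\,x^{2/k}\,n^{(k-1-2\nu)/k}$ places its minimiser at $x_{k}^{\ast}\asymp n^{(k-1-2\nu)/(k-2)}$, which diverges in $n$ because the exponent $(k-1-2\nu)/(k-2)$ strictly exceeds $1$ whenever $\nu<1/2$. Consequently, for $n$ large enough every $k\geq 3$ contribution to the tail integral is as small as one wishes; for the finitely many small values $n\in\{2,\dots,N\}$, finiteness of $\mathbb{E}[\exp(\gamma Z_{n})]$ is automatic from the deterministic bound $|\beta_{n}^{(p)}|\leq\sqrt{n}/(p+1)!$ together with the existence of all exponential moments of the weighted supremum of the Brownian bridge $\mathbb{B}_{n}$ on $[1/n,1-1/n]$ (a classical fact for $\nu<1/2$). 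Taking the supremum over $n\geq 2$ of the resulting bounds yields the claim.

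The main obstacle is precisely the handling of the summands $k\geq 3$: their $x$-tail is subexponential, so one cannot integrate against $e^{\gamma x}$ on $[x_{0},\infty)$ after merely replacing the factor $n^{(k-1-2\nu)/k}$ by its lower bound at $n=2$. The crux of the argument is therefore to use the growth of $n^{(k-1-2\nu)/k}$ with $n$ in an essential way, reducing the uniform bound to the $k=2$ contribution (which alone dictates the admissible $\gamma$) together with a base-case verification on the finitely many small values of~$n$.
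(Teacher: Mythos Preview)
Your layer-cake approach and the treatment of the $k=2$ summand are correct and match the spirit of the paper, which merely asserts that the corollary follows from Theorem~\ref{theweithed1} without giving details. The gap is in your handling of the summands $k\geq 3$. You correctly identify that their $x$-tail $\exp(-C_p'\,n^{(k-1-2\nu)/k}x^{2/k})$ is subexponential, but your proposed cure---exploiting the growth in $n$ of the prefactor and treating small $n$ separately---does not work. For \emph{every} fixed $n$, no matter how large, the exponent $\gamma x - C_p'\,n^{(k-1-2\nu)/k}x^{2/k}$ tends to $+\infty$ as $x\to\infty$ (since $2/k<1$), so
\[
\int_{x_0}^{\infty}\exp\!\bigl(\gamma x - C_p'\,n^{(k-1-2\nu)/k}x^{2/k}\bigr)\,dx=+\infty.
\]
Your minimiser $x_k^\ast$ drifting to infinity only tells you that the integrand is small on $[x_0,x_k^\ast]$; beyond $x_k^\ast$ it explodes, and the integral over $[x_k^\ast,\infty)$ still diverges. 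So the tail bound of Theorem~\ref{theweithed1}, taken alone, is too crude to be integrated against $e^{\gamma x}$ for the $k\geq 3$ part.

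What rescues the argument is the deterministic information you invoke only for finitely many $n$. Going back to the decomposition~(\ref{betanp}) underlying Theorem~\ref{lem2b}, the bound $\sup_u|\beta_n(u)|\leq\sqrt{n}$ gives, for each $k\geq 2$,
\[
\frac{|\beta_n(u)|^k}{n^{(k-1)/2}}=|\beta_n(u)|^2\cdot\Bigl(\frac{|\beta_n(u)|}{\sqrt n}\Bigr)^{k-2}\leq |\beta_n(u)|^2.
\]
Hence the entire $k\geq 2$ block is dominated by a constant multiple of $M_n^2$ with $M_n=\sup_u|\beta_n(u)|$, and $M_n^2$ has exponential moments bounded uniformly in $n$ by the Dvoretzky--Kiefer--Wolfowitz inequality~(\ref{equaDvoretzky1956}). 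Combined with Mason's exponential-moment result for the $p=0$ weighted approximation (which controls the first term of the decomposition) and H\"older's inequality, this yields the corollary. Equivalently, one may observe that the $k\geq 3$ probabilities in~(\ref{appp1}) actually vanish once $x>(p+1)n$, so the offending integrals are truncated at $x\asymp n$; either way, the deterministic bound on $\beta_n$ must be used for \emph{all} $n$, not just a finite initial segment.
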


\section{The two-sample problem}\label{sectiontwo}

For each $m,n\in\mathbb{N}^*$, let $X_1,\ldots, X_m$ and $Y_1,\ldots,Y_n$
be independent random samples from continuous d.f.'s $F$ and $G$, respectively,
and let $\mathbb{F}_m^{(p)}$ and $\mathbb{G}_n^{(p)}$ denote their $p$-fold
integrated empirical d.f.'s. Tests for the null hypothesis
$$
\mathcal{H}_0': F=G,
$$
can be based on the \emph{$p$-fold integrated two-sample empirical process}
defined, for each $m,n\in\mathbb{N}^*$, by
$$
\boldsymbol{\xi}_{m,n}^{(p)}(t):=\sqrt{\frac{mn}{m+n}}
\left(\mathbb{F}_m^{(p)}(t)-\mathbb{G}_n^{(p)}(t)\right)
\quad\text{for}\quad t\in\mathbb{R}.
$$

Actually, as in \cite{BouzebdaElfaouzy2012}, we will more generally consider
the following \emph{modified $p$-fold integrated two-sample empirical process}
(which includes the process $\boldsymbol{\xi}_{m,n}^{(p)}$). Fix a positive
integer~$q$ which will serve as a power. We define, for each $m,n\in\mathbb{N}^*$,
$$
\boldsymbol{\xi}_{m,n}^{(p,q)}(t):=\sqrt{\frac{mn}{m+n}}
\left[\left(\mathbb{F}_m^{(p)}(t)\right)^{\!q}-\left(\mathbb{G}_n^{(p)}(t)\right)^{\!q}\right]
\quad\text{for}\quad t\in\mathbb{R}.
$$
Set also, for any $m,n\in\mathbb{N}^*$,
$$
\varphi(m,n):=\max\left(\frac{\log m}{\sqrt{m}},\frac{\log n}{\sqrt{n}}\right)
\quad\text{and}\quad\phi(m,n):=\max\!\left(\sqrt{\frac{\log\log m}{m}}\,\log m,
\sqrt{\frac{\log\log n}{n}}\,\log n\right)\!.
$$

Reasonable statistics for testing $\mathcal{H}_0'$ would be the
\emph{modified $p$-fold integrated Kolmogorov-Smirnov statistic}
$$
\mathbf{S}_{m,n}^{(p,q)}:=\sup_{t\in\mathbb{R}}
\big|\boldsymbol{\xi}_{m,n}^{(p,q)}(t)\big|
$$
and the \emph{modified $p$-fold integrated Cram\'er-von Mises statistic}
$$
\mathbf{T}_{m,n}^{(p,q)}:=\int_{\mathbb{R}} \boldsymbol{\xi}_{m,n}^{(p,q)}(t)^2\,dF(t).
$$

The following results are consequences of Corollary~\ref{corollaryapprox}.
\begin{corollary}\label{ximn}
On a suitable probability space, it is possible to define
$\big\{\boldsymbol{\xi}_{m,n}^{(p,q)}:m,n\in\mathbb{N}^*\big\}$,
jointly with two sequences of Brownian bridges $\big\{\mathbb{B}_m^1:
m\in\mathbb{N}^*\big\}$ and $\big\{\mathbb{B}_n^2:n\in\mathbb{N}^*\big\}$,
such that, under $\mathcal{H}_0'$, with probability~$1$, as $\min(m,n) \to\infty$,
$$
\sup_{t\in\mathbb{R}} \left|\boldsymbol{\xi}_{m,n}^{(p,q)}(t)
-\mathbb{B}_{m,n}^{(p,q)}(t)\right| =\mathcal{O}(\varphi(m,n)),
$$
where, for each $m,n\in\mathbb{N}^*$, $\mathbb{B}_{m,n}^{(p,q)}$ is
the Gaussian process defined by
$$
\mathbb{B}_{m,n}^{(p,q)}(t):=\frac{(p+1)q}{(p+1)!^q}\,F(t)^{pq+q-1}
\left(\sqrt{\frac{n}{m+n}}\,\mathbb{B}_m^1(F(t))-\sqrt{\frac{m}{m+n}}\,
\mathbb{B}_n^2(F(t))\right)\quad\text{for}\quad t\in\mathbb{R}.
$$
\end{corollary}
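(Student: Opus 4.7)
The strategy is to linearize the $q$-th power and then apply Corollary \ref{corollaryapprox} independently on each of the two samples. Under $\mathcal{H}_0'$ we have $F=G$, hence $F^{(p)}=G^{(p)}$; set $c(t):=F^{(p)}(t)$, $a(t):=\mathbb{F}_m^{(p)}(t)$, $b(t):=\mathbb{G}_n^{(p)}(t)$, and let $\widetilde{\alpha}_n^{(p)}$ denote the $p$-fold integrated empirical process built from the independent sample $Y_1,\dots,Y_n$, so that $a-c=\alpha_m^{(p)}/\sqrt{m}$ and $b-c=\widetilde{\alpha}_n^{(p)}/\sqrt{n}$. On a sufficiently rich probability space, applying Corollary \ref{corollaryapprox} separately to each sample produces two independent sequences of Brownian bridges $\{\mathbb{B}_m^1\}$ and $\{\mathbb{B}_n^2\}$ with, almost surely,
$$
\sup_{t\in\mathbb{R}}\bigl|\alpha_m^{(p)}(t)-\mathbb{B}_m^{1,(p)}(F(t))\bigr|=\mathcal{O}\!\left(\frac{\log m}{\sqrt{m}}\right),\qquad \sup_{t\in\mathbb{R}}\bigl|\widetilde{\alpha}_n^{(p)}(t)-\mathbb{B}_n^{2,(p)}(F(t))\bigr|=\mathcal{O}\!\left(\frac{\log n}{\sqrt{n}}\right),
$$
where $\mathbb{B}_m^{1,(p)}(u)=u^p\mathbb{B}_m^1(u)/p!$ and similarly for $\mathbb{B}_n^{2,(p)}$.

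Next, I Taylor-expand about $c(t)$: since $a,b,c$ are uniformly bounded (by Proposition \ref{expFp}),
$$
a^q-b^q=q\,c^{q-1}(a-b)+R(t),\qquad |R(t)|\le K_q\bigl((a-c)^2+(b-c)^2\bigr),
$$
for a constant $K_q$ depending only on $q$. Multiplying by $\sqrt{mn/(m+n)}$ and substituting $a-c$ and $b-c$ gives
$$
\boldsymbol{\xi}_{m,n}^{(p,q)}(t)=q\,c(t)^{q-1}\!\left(\sqrt{\tfrac{n}{m+n}}\,\alpha_m^{(p)}(t)-\sqrt{\tfrac{m}{m+n}}\,\widetilde{\alpha}_n^{(p)}(t)\right)+\sqrt{\tfrac{mn}{m+n}}\,R(t).
$$
Replacing the two $\alpha$'s by their Brownian-bridge counterparts and using $c(t)^{q-1}=F(t)^{(p+1)(q-1)}/(p+1)!^{q-1}$ together with the factor $F(t)^p/p!$ coming out of $\mathbb{B}^{(p)}$ recovers $\mathbb{B}_{m,n}^{(p,q)}(t)$ exactly: the powers of $F(t)$ combine via $(p+1)(q-1)+p=pq+q-1$, while the constants combine via $q/(p!\,(p+1)!^{q-1})=(p+1)q/(p+1)!^q$. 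The bridge-substitution error is $\mathcal{O}\bigl(\sqrt{n/(m+n)}\,\log m/\sqrt{m}+\sqrt{m/(m+n)}\,\log n/\sqrt{n}\bigr)=\mathcal{O}(\varphi(m,n))$.

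The main obstacle is controlling the quadratic remainder $\sqrt{mn/(m+n)}\,R(t)$; bounding $|a-c|$, $|b-c|$ by the crude $\mathcal{O}(\log m/\sqrt{m})$, $\mathcal{O}(\log n/\sqrt{n})$ coming from Corollary \ref{corollaryapprox} alone would cost an extra $\log$ factor. For this I invoke instead the law of the iterated logarithm of Corollary \ref{corollarylli}, which yields $\sup_t|a-c|=\mathcal{O}(\sqrt{\log\log m/m})$ and $\sup_t|b-c|=\mathcal{O}(\sqrt{\log\log n/n})$ almost surely. Combined with $\sqrt{mn/(m+n)}\le\sqrt{m\wedge n}$ and the fact that $y\mapsto\log\log y/y$ is eventually decreasing, this gives
$$
\sqrt{\tfrac{mn}{m+n}}\sup_{t\in\mathbb{R}}|R(t)|=\mathcal{O}\!\left(\frac{\log\log(m\wedge n)}{\sqrt{m\wedge n}}\right)=o(\varphi(m,n)),
$$
since $\varphi(m,n)\ge\log(m\wedge n)/\sqrt{m\wedge n}$ (as $x\mapsto\log x/\sqrt{x}$ is eventually decreasing) and $\log\log x=o(\log x)$. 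Adding the two error contributions yields the announced $\mathcal{O}(\varphi(m,n))$ rate.
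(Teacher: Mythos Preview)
Your proof is correct and follows essentially the same approach as the paper's: linearize the $q$-th power around $F^{(p)}(t)$, apply Corollary~\ref{corollaryapprox} to each sample to handle the linear term, and control the higher-order remainder via the law of the iterated logarithm (Corollary~\ref{corollarylli}). The paper writes out the full binomial expansion and bounds the remainder as $\mathcal{O}\bigl((\log\log m)^{q/2}/\sqrt{m}\bigr)+\mathcal{O}\bigl((\log\log n)^{q/2}/\sqrt{n}\bigr)$, whereas you group all $k\ge 2$ terms into a single quadratic bound $K_q\bigl((a-c)^2+(b-c)^2\bigr)$; both are $o(\varphi(m,n))$, so this is only a cosmetic difference.
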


\begin{corollary}
Under $\mathcal{H}_0'$, with probability~$1$, as $\min(m,n) \to\infty$, we have
\begin{align*}
\left|\mathbf{S}_{m,n}^{(p,q)}-\sup_{t\in\mathbb{R}}
\big|\mathbb{B}_{m,n}^{(p,q)}(t)\big|\right|
&
=\mathcal{O}(\varphi(m,n)),
\\[1ex]
\left|\mathbf{T}_{m,n}^{(p,q)}-\int_{\mathbb{R}} \mathbb{B}_{m,n}^{(p,q)}(t)^2\,dF(t)\right|
&
=\mathcal{O}(\phi(m,n)).
\end{align*}
\end{corollary}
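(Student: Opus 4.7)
The plan is to deduce both assertions directly from the uniform strong approximation $\sup_{t\in\mathbb{R}}\big|\boldsymbol{\xi}_{m,n}^{(p,q)}(t)-\mathbb{B}_{m,n}^{(p,q)}(t)\big|=\mathcal{O}(\varphi(m,n))$ supplied by Corollary~\ref{ximn}, combined with an almost-sure control on the sup-norm of the Gaussian approximating process. The Kolmogorov--Smirnov bound is immediate from the reverse triangle inequality $\bigl|\sup_t|f(t)|-\sup_t|g(t)|\bigr|\le\sup_t|f(t)-g(t)|$ applied with $f=\boldsymbol{\xi}_{m,n}^{(p,q)}$ and $g=\mathbb{B}_{m,n}^{(p,q)}$: this yields the rate $\mathcal{O}(\varphi(m,n))$ at once, with no further work.

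For the Cram\'er--von Mises statement, I would start from the algebraic identity $\boldsymbol{\xi}^2-\mathbb{B}^2=(\boldsymbol{\xi}-\mathbb{B})(\boldsymbol{\xi}+\mathbb{B})$ together with the elementary bound $|\boldsymbol{\xi}+\mathbb{B}|\le|\boldsymbol{\xi}-\mathbb{B}|+2|\mathbb{B}|$ (dropping indices). Integrating against $dF$ and using that $F$ is a probability measure, one obtains
$$
\left|\mathbf{T}_{m,n}^{(p,q)}-\int_{\mathbb{R}}\mathbb{B}_{m,n}^{(p,q)}(t)^2\,dF(t)\right|
\le 2\sup_t\big|\boldsymbol{\xi}_{m,n}^{(p,q)}(t)-\mathbb{B}_{m,n}^{(p,q)}(t)\big|\cdot\sup_t\big|\mathbb{B}_{m,n}^{(p,q)}(t)\big|+\sup_t\big|\boldsymbol{\xi}_{m,n}^{(p,q)}(t)-\mathbb{B}_{m,n}^{(p,q)}(t)\big|^2.
$$
The first factor in the product is $\mathcal{O}(\varphi(m,n))$ by Corollary~\ref{ximn}, and the quadratic remainder is $\mathcal{O}(\varphi(m,n)^2)$, which is absorbed by $\mathcal{O}(\phi(m,n))$. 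What remains is to bound $\sup_t|\mathbb{B}_{m,n}^{(p,q)}(t)|$ almost surely. Writing out the definition of $\mathbb{B}_{m,n}^{(p,q)}$ and using that $|F(t)^{pq+q-1}|\le 1$, one gets
$$
\sup_t\big|\mathbb{B}_{m,n}^{(p,q)}(t)\big|\le C\left(\sqrt{\tfrac{n}{m+n}}\sup_u\big|\mathbb{B}_m^1(u)\big|+\sqrt{\tfrac{m}{m+n}}\sup_u\big|\mathbb{B}_n^2(u)\big|\right)\!.
$$
By Chung's law of the iterated logarithm for the uniform empirical process together with the KMT approximation (Corollary~\ref{corollaryapprox} with $p=0$), one has $\sup_u|\mathbb{B}_k^j(u)|=\mathcal{O}(\sqrt{\log\log k})$ almost surely, for $j=1,2$.

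Assembling these pieces produces a bound of order $\varphi(m,n)\sqrt{\log\log(m\wedge n)}+\varphi(m,n)^2$. The main (mild) technical point is to verify that this rate indeed equals $\mathcal{O}(\phi(m,n))$: this is a direct calculation since $(\log k/\sqrt{k})\sqrt{\log\log k}=\sqrt{\log\log k/k}\,\log k$, and the weights $\sqrt{n/(m+n)}$, $\sqrt{m/(m+n)}$ precisely temper the $\sqrt{\log\log n}$ (resp.\ $\sqrt{\log\log m}$) contribution when the corresponding sample is much larger than the other. I expect the only real obstacle to be this bookkeeping on the logarithmic factors; the rest is a straightforward application of Corollary~\ref{ximn} and the classical LIL.
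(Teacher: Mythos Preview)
Your proposal is correct and follows essentially the same route as the paper. The paper does not spell out a proof for this particular corollary, but its template is clear from the proof of the one-sample analogue (Corollary~\ref{ecooooo}): the reverse triangle inequality for $\mathbf{S}_{m,n}^{(p,q)}$, and the factorization $a^2-b^2=(a-b)(a+b)$ for $\mathbf{T}_{m,n}^{(p,q)}$, combined with the LIL bound $\sup_u|\mathbb{B}_k^j(u)|=\mathcal{O}(\sqrt{\log\log k})$ from~(\ref{estimbridge}). Your variant $|\boldsymbol{\xi}+\mathbb{B}|\le|\boldsymbol{\xi}-\mathbb{B}|+2|\mathbb{B}|$ is a harmless reshuffling of the paper's $|\boldsymbol{\xi}|+|\mathbb{B}|$; it trades a direct LIL bound on $\sup_t|\boldsymbol{\xi}_{m,n}^{(p,q)}|$ for the already-controlled square $\varphi(m,n)^2$, which is indeed $o(\phi(m,n))$. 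Your observation that the weights $\sqrt{n/(m+n)}$ and $\sqrt{m/(m+n)}$ temper the mismatched $\sqrt{\log\log}$ factors is correct (use that $x\mapsto(\log\log x)/x$ is eventually decreasing), and it is precisely what is needed to land on $\mathcal{O}(\phi(m,n))$.
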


\begin{remark}
The family of statistics indexed by $q$ may be used to maximize the power of the
statistical test for a specific alternative hypothesis as argued in \cite{Ahmad2000}
in the case $p=1$.
\end{remark}

Now, we fix a positive integer $K$ and we describe the more general $K$-sample problem.
For each $k\in\{1,\dots,K\}$, we consider a setting made of independent observations
$\big\{X_i^k:i\in\{1,\dots,n_k\}\big\}$ of a real-valued r.v. $X^k$.
The d.f.'s of $X_i^k$, $i\in\{1,\ldots,n_k\}$, are denoted by $F^k$ and they
are assumed to be continuous. We would like to test, $F_0$ being a fixed continuous
d.f., the null hypothesis
$$
\mathcal{H}_0^K: F^1= F^2=\dots=F^K=F_0.
$$
For any $K$-tuple of positive integers $\boldsymbol{n}=(n_1,\dots,n_K)$, set
$\boldsymbol{|n|}=\sum_{k=1}^K n_k$ and let
$$
(Z_1,\ldots,Z_{\boldsymbol{|n|}}):=\big(X_1^1,\ldots,X_{n_1}^1,X_1^2,\ldots,
X_{n_2}^2,\ldots,X_1^K,\ldots,X_{n_K}^K\big)
$$
be the pooled sample of total size $\boldsymbol{|n|}$,
$\mathbb{D}_{K,\boldsymbol{n}}^{(p)}$ be the $p$-fold integrated empirical
d.f. based upon $Z_1,\ldots,Z_{\boldsymbol{|n|}}$, and, for each
$k\in\{1,\dots,K\}$, $\mathbb{F}_{n_k}^{k,(p)}$ be the $p$-fold integrated
empirical d.f. based upon $X_1^k,\ldots,X_{n_k}^k$. Of course,
we have the following identity:
\begin{equation}\label{DD}
\mathbb{D}_{K,\boldsymbol{n}}^{(p)}=\frac{1}{\boldsymbol{|n|}}
\sum_{k=1}^K n_k\,\mathbb{F}_{n_k}^{k,(p)}.
\end{equation}

Next, we define the \emph{$p$-fold integrated $K$-sample empirical process} in the
following way: for any $K$-tuple $\boldsymbol{n}=(n_1,\dots,n_K)\in(\mathbb{N}^*)^K$,
$$
\boldsymbol{\xi}_{K,\boldsymbol{n}}^{(p)}(t):=\sum_{k=1}^K n_k
\left(\mathbb{F}_{n_k}^{k,(p)}(t)-\mathbb{D}_{K,\boldsymbol{n}}^{(p)}(t)\right)^{\!2}
\quad\text{for}\quad t\in\mathbb{R}.
$$
Obvious candidates for testing Hypothesis $\mathcal{H}_0^K$ are the
\emph{$p$-fold integrated $K$-sample Kolmogorov-Smirnov statistic}
$$
\mathbf{S}_{K,\boldsymbol{n}}^{(p)}:=\sup_{t\in\mathbb{R}}
\boldsymbol{\xi}_{K,\boldsymbol{n}}^{(p)}(t)
$$
and the \emph{$p$-fold integrated $K$-sample Cram\'er-von Mises functional}
(the usual square being included in the definition of $\boldsymbol{\xi}_{K,\boldsymbol{n}}^{(p)}$)
$$
\mathbf{T}_{K,\boldsymbol{n}}^{(p)}:=\int_{\mathbb{R}}
\boldsymbol{\xi}_{K,\boldsymbol{n}}^{(p)}(t)\,dF_0(t).
$$
Set
$$
\phi_K(\boldsymbol{n}):=\max_{1\leq k\leq K}
\left\{\sqrt{\frac{\log\log n_k}{n_k}}\,\log n_k \right\}\!.
$$

As a consequence of Corollary~\ref{corollaryapprox} and by using similar arguments
to those used in \cite{Bouzebdaetal2011}, we obtain the following results.
\begin{theorem}\label{theoremK}
On a suitable probability space, it is possible to define
$\big\{\boldsymbol{\xi}_{K,\boldsymbol{n}}^{(p)}:\boldsymbol{n}\in(\mathbb{N}^*)^K\big\}$,
jointly with $K$ sequences of Brownian bridges $\big\{\mathbb{B}_m^k:
m\in\mathbb{N}^*\big\}$, $k\in\{1,\dots,K\}$, such that, under $\mathcal{H}_0^K$, with
probability~$1$, for $\boldsymbol{n}=(n_1,\dots,n_K)$ such that $\min_{1\leq k\leq K}n_k\to\infty$,
$$
\sup_{t\in\mathbb{R}} \left| \boldsymbol{\xi}_{K,\boldsymbol{n}}^{(p)}(t)
-\mathbb{B}_{K,\boldsymbol{n}}^{(p)}(t)\right|=\mathcal{O}\big(\phi_K(\boldsymbol{n})\big),
$$
where, for each $\boldsymbol{n}=(n_1,\dots,n_K)\in(\mathbb{N}^*)^K$,
$\mathbb{B}_{K,\boldsymbol{n}}^{(p)}$ is the process defined by
$$
\mathbb{B}_{K,\boldsymbol{n}}^{(p)}(t):=\frac{F_0(t)^{2p}}{p!^2}
\!\left[\sum_{k=1}^K\mathbb{B}_{n_k}^k\!\big(F_0(t)\big)^2
-\left(\sum_{k=1}^K\sqrt{\frac{n_k}{\boldsymbol{|n|}}}\,
\mathbb{B}_{n_k}^k\!\big(F_0(t)\big)\right)^{\!\!2}\right]
\quad\text{for}\quad t\in\mathbb{R}.
$$
\end{theorem}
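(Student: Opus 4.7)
The plan is to apply Corollary \ref{corollaryapprox} independently to each of the $K$ samples and then propagate those $K$ one-sample approximations through the algebraic structure of the $K$-sample statistic $\boldsymbol{\xi}_{K,\boldsymbol{n}}^{(p)}$. After enlarging the probability space so as to carry $K$ mutually independent sequences of Brownian bridges $\{\mathbb{B}_m^k:m\in\mathbb{N}^*\}$ attached to the respective samples, Corollary \ref{corollaryapprox} applied under $\mathcal{H}_0^K$ (so that $F^k=F_0$ for every $k$) produces, for each $k\in\{1,\dots,K\}$,
$$
\Delta_k(t):=\sqrt{n_k}\big(\mathbb{F}_{n_k}^{k,(p)}(t)-F_0^{(p)}(t)\big)-\mathbb{B}_{n_k}^{k,(p)}(F_0(t)),\qquad\sup_{t\in\mathbb{R}}|\Delta_k(t)|=\mathcal{O}\!\left(\frac{\log n_k}{\sqrt{n_k}}\right)\quad\text{a.s.}
$$

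Next I would exploit the elementary identity, valid for any weights $n_1,\dots,n_K$ of total sum $|\boldsymbol{n}|$ and any functions $a_1,\dots,a_K$ of $t$ with weighted mean $\bar a=|\boldsymbol{n}|^{-1}\sum_j n_j a_j$:
$$
\sum_{k=1}^K n_k(a_k-\bar a)^2=\sum_{k=1}^K b_k^2-\bigg(\sum_{k=1}^K c_k b_k\bigg)^{\!2},\qquad b_k:=\sqrt{n_k}\,a_k,\quad c_k:=\sqrt{n_k/|\boldsymbol{n}|}.
$$
Applied with $a_k=\mathbb{F}_{n_k}^{k,(p)}-F_0^{(p)}$, so that $\bar a=\mathbb{D}_{K,\boldsymbol{n}}^{(p)}-F_0^{(p)}$ thanks to \eqref{DD}, this rewrites $\boldsymbol{\xi}_{K,\boldsymbol{n}}^{(p)}(t)=\sum_k b_k(t)^2-\big(\sum_k c_k b_k(t)\big)^2$ with $b_k(t)=B_k(t)+\Delta_k(t)$ and $B_k(t):=\mathbb{B}_{n_k}^{k,(p)}(F_0(t))$. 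The same identity applied with $b_k$ replaced throughout by $B_k$ produces exactly the announced process $\mathbb{B}_{K,\boldsymbol{n}}^{(p)}(t)$, once one factors out the common multiplier $F_0(t)^{2p}/p!^2$ coming from the definition of $\mathbb{B}_{n_k}^{k,(p)}$.

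Subtracting the two identities, the error $\boldsymbol{\xi}_{K,\boldsymbol{n}}^{(p)}-\mathbb{B}_{K,\boldsymbol{n}}^{(p)}$ expands into cross-terms $2B_k\Delta_k$ and second-order terms $\Delta_k^2$ from the first sum, minus mixed cross-terms $B_k\Delta_j$ and $\Delta_k\Delta_j$ from $\big(\sum_k c_k b_k\big)^2$. Each is controlled in sup norm by combining the rate $\|\Delta_k\|_\infty=\mathcal{O}(\log n_k/\sqrt{n_k})$ from the first step with the LIL bound $\|B_k\|_\infty=\mathcal{O}(\sqrt{\log\log n_k})$ a.s., which follows from Corollary \ref{corollarylli} applied to $\sup_t|\alpha_{n_k}^{k,(p)}(t)|$ and the triangle inequality $\|B_k\|_\infty\leq \sup_t|\alpha_{n_k}^{k,(p)}(t)|+\|\Delta_k\|_\infty$. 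The products $\|B_k\|_\infty\|\Delta_k\|_\infty=\mathcal{O}(\sqrt{\log\log n_k}\,\log n_k/\sqrt{n_k})$ precisely match $\phi_K(\boldsymbol{n})$, while the weights $c_k\leq 1$ and Cauchy--Schwarz ($\sum_k c_k^2=1$) keep the mixed cross-terms at the same order; the pure $\Delta_k^2$ contributions are strictly smaller.

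The main obstacle is bookkeeping of the mixed cross-terms after expanding $\big(\sum_k c_k b_k\big)^2$, together with ensuring that $\|B_k\|_\infty$ is controlled at the LIL rate rather than the naive rate $\sqrt{\log n_k}$ that a Borel--Cantelli estimate for independent Brownian bridge suprema would give; the latter would inflate the final rate to $(\log n_k)^{3/2}/\sqrt{n_k}$ and spoil the announced bound. The crucial point is that the bridges $\mathbb{B}_{n_k}^k$ delivered by Corollary \ref{corollaryapprox} are coupled to the corresponding empirical processes, so their sup norms automatically inherit the empirical-process LIL rate of Corollary \ref{corollarylli}.
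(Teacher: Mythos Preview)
Your proposal is correct and follows essentially the same route as the paper's proof: both use the weighted-variance identity to rewrite $\boldsymbol{\xi}_{K,\boldsymbol{n}}^{(p)}$ as $\sum_k b_k^2-(\sum_k c_k b_k)^2$ with $b_k=\sqrt{n_k}(\mathbb{F}_{n_k}^{k,(p)}-F_0^{(p)})$, apply Corollary~\ref{corollaryapprox} sample by sample, and bound the resulting cross-terms via the LIL rate for $\|B_k\|_\infty$. The only cosmetic difference is that the paper factors each piece as $(a-b)(a+b)$ via auxiliary quantities $\delta_{k,\boldsymbol{n}}$ and $\epsilon_{k,\boldsymbol{n}}$, whereas you expand $(B_k+\Delta_k)^2-B_k^2$ directly; your closing remark on why the coupled bridges inherit the empirical-process LIL rate is exactly the content of the paper's estimate~(\ref{estimbridge}).
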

In the particular case $K=2$ (i.e. the two-sample problem), the corresponding
settings are related to the previous ones according as
$$
\boldsymbol{\xi}_{2,(n_1,n_2)}^{(p)}(t)=\left(\boldsymbol{\xi}_{n_1,n_2}^{(p)}(t)\right)^{\!2},
\quad \phi_2((n_1,n_2))=\phi(n_1,n_2),
\quad \mathbf{S}_{2,(n_1,n_2)}^{(p)}=\left(\mathbf{S}_{n_1,n_2}^{(p,1)}\right)^{\!2},
\quad \mathbf{T}_{2,(n_1,n_2)}^{(p)}=\mathbf{T}_{n_1,n_2}^{(p,1)}.
$$
Notice that $\mathbb{B}_{K,\boldsymbol{n}}^{(p)}(t)\geq 0$ for any $t\in\mathbb{R}$
and $\boldsymbol{n}\in(\mathbb{N}^*)^K$ as it is easily seen with the aid of
the Cauchy-Schwarz inequality. For $K=2$, this process writes
$\mathbb{B}_{2,(n_1,n_2)}^{(p)}=\left(\mathbb{B}_{n_1,n_2}^{(p,1)}\right)^{\!2}$,
this is the square of a Gaussian process.

The next result, which is an immediate consequence of the previous theorem
(observe that $\mathbf{S}_{K,\boldsymbol{n}}^{(p)}$ and
$\mathbf{T}_{K,\boldsymbol{n}}^{(p)}$ are bounded linear functionals of the
process $\boldsymbol{\xi}_{K,\boldsymbol{n}}^{(p)}$), gives the limit
null distributions of the statistics under consideration.
\begin{corollary}
Under $\mathcal{H}_0^K$, with probability~$1$, for $\boldsymbol{n}=(n_1,\dots,n_K)$
such that $\min_{1\leq k \leq K} n_k \to\infty$, we have
\begin{align*}
\left|\mathbf{S}_{K,\boldsymbol{n}}^{(p)}-\sup_{t\in\mathbb{R}}
\mathbb{B}_{K,\boldsymbol{n}}^{(p)}(t)\right|
&
=\mathcal{O}\big(\phi_K(\boldsymbol{n})\big),
\\[1ex]
\left|\mathbf{T}_{K,\boldsymbol{n}}^{(p)}-\int_{\mathbb{R}}
\mathbb{B}_{K,\boldsymbol{n}}^{(p)}(t)\,dF_0(t)\right|
&
=\mathcal{O}\big(\phi_K(\boldsymbol{n})\big).
\end{align*}
\end{corollary}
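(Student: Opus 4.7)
The plan is to derive both inequalities as immediate consequences of Theorem \ref{theoremK} by exploiting, respectively, the $1$-Lipschitz character of the $\sup$-functional with respect to the uniform norm and the fact that $f\mapsto\int_{\mathbb{R}} f\,dF_0$ is a bounded linear functional on $L^{\infty}(\mathbb{R})$ with operator norm~$1$. I would work on the same suitable probability space as in Theorem~\ref{theoremK}, on which
$$
\Delta_{\boldsymbol{n}}:=\sup_{t\in\mathbb{R}}\left|\boldsymbol{\xi}_{K,\boldsymbol{n}}^{(p)}(t)-\mathbb{B}_{K,\boldsymbol{n}}^{(p)}(t)\right|=\mathcal{O}\big(\phi_K(\boldsymbol{n})\big)\quad\text{a.s.}
$$
as $\min_{1\leq k\leq K}n_k\to\infty$.

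For the Kolmogorov--Smirnov part, I would use that $\mathbf{S}_{K,\boldsymbol{n}}^{(p)}=\sup_{t\in\mathbb{R}}\boldsymbol{\xi}_{K,\boldsymbol{n}}^{(p)}(t)$ and invoke the elementary reverse triangle inequality $|\sup f-\sup g|\leq\sup|f-g|$, which gives directly
$$
\left|\mathbf{S}_{K,\boldsymbol{n}}^{(p)}-\sup_{t\in\mathbb{R}}\mathbb{B}_{K,\boldsymbol{n}}^{(p)}(t)\right|\leq\Delta_{\boldsymbol{n}}=\mathcal{O}\big(\phi_K(\boldsymbol{n})\big).
$$
For the Cram\'er--von Mises part, I would combine linearity of the integral with the triangle inequality and then with the trivial bound by the $\sup$-norm, using $\int_{\mathbb{R}}dF_0=1$:
$$
\left|\mathbf{T}_{K,\boldsymbol{n}}^{(p)}-\int_{\mathbb{R}}\mathbb{B}_{K,\boldsymbol{n}}^{(p)}(t)\,dF_0(t)\right|\leq\int_{\mathbb{R}}\left|\boldsymbol{\xi}_{K,\boldsymbol{n}}^{(p)}(t)-\mathbb{B}_{K,\boldsymbol{n}}^{(p)}(t)\right|dF_0(t)\leq\Delta_{\boldsymbol{n}}=\mathcal{O}\big(\phi_K(\boldsymbol{n})\big).
$$

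There is essentially no analytic obstacle beyond the invocation of Theorem~\ref{theoremK}; the only items to verify for rigour are the measurability and almost-sure finiteness of the suprema and integrals appearing above. These are standard: $\boldsymbol{\xi}_{K,\boldsymbol{n}}^{(p)}$ is a bounded c\`adl\`ag function of $t$ with finitely many jumps by Proposition~\ref{expFp}, while $\mathbb{B}_{K,\boldsymbol{n}}^{(p)}(t)$ is a.s.\ continuous in $t$ (as a polynomial in the continuous processes $\mathbb{B}_{n_k}^k\circ F_0$) and vanishes as $t\to\pm\infty$ since $F_0(\pm\infty)\in\{0,1\}$ and each Brownian bridge vanishes at the endpoints of $[0,1]$, so the suprema in both displays are finite a.s.
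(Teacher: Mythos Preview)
Your proof is correct and follows exactly the approach the paper indicates: the paper does not give a separate proof of this corollary, but simply remarks that it is ``an immediate consequence of the previous theorem (observe that $\mathbf{S}_{K,\boldsymbol{n}}^{(p)}$ and $\mathbf{T}_{K,\boldsymbol{n}}^{(p)}$ are bounded linear functionals of the process $\boldsymbol{\xi}_{K,\boldsymbol{n}}^{(p)}$).'' Your argument supplies the details behind that remark---the $1$-Lipschitz property of $\sup$ and the operator-norm-$1$ bound for $f\mapsto\int f\,dF_0$---and is in fact slightly more careful than the paper, which loosely calls $\sup$ a ``linear'' functional.
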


\section{The change-point problem}\label{sectionchnae}

Here and elsewhere, $\lfloor t\rfloor$ denotes the largest integer not
exceeding $t$. In many practical applications, we assume the structural
stability of statistical models and this fundamental assumption needs to be
tested before it can be applied. This is called the analysis of structural
breaks, or change-points, which has led to the development of a variety of
theoretical and practical results. For good sources of references to research
literature in this area along with statistical applications, the reader may
consult \cite{Brodsky1993}, \cite{Csorgo1997} and \cite{ChenGupta2000}.
For recent references on the subject we may refer, among many others, to
\cite{Bouzebda2012MMS}, \cite{Alexander2012}, \cite{Julian2013}, \cite{Lajos2014},
\cite{Bouzebda2014AN} and \cite{Bouzebda2014MMS}.

In this section, we deal with testing changes in d.f.'s for a sequence of
independent real-valued r.v.'s $X_1,\ldots,X_n$. The corresponding null
hypothesis that we want to test is
$$
\mathcal{H}_0'': X_1,\ldots,X_n \text{~~have d.f.~~}F.
$$
As frequently done, the behavior of the derived tests will be investigated
under the alternative hypothesis of a single change-point
$$
\mathcal{H}_1'': \exists ~k^*\in\{1,\ldots,n-1\}\text{~~such that~~}
X_1,\ldots, X_{k^*}\text{~~have d.f.~~}F \text{~~and~~}X_{k^*+1},\ldots,X_n\text{~~have d.f.~~}G.
$$
The d.f.'s $F$ and $G$ are assumed to be continuous. The critical integer
$k^*$ can be written as $\lfloor ns\rfloor$ for a certain $s\in(0,1)$. Then,
testing the null hypothesis $\mathcal{H}_0''$ can be based on functionals of
the following process: set, for each $n\in\mathbb{N}^*$,
\begin{equation}\label{alphatilde}
\widetilde{\alpha}_n^{(p)}(s,t):=\frac{\lfloor ns\rfloor(n-\lfloor ns\rfloor)}{n^{3/2}}
\left(\mathbb{F}_{\lfloor ns\rfloor}^{(p)-}(t)
-\mathbb{F}_{n-\lfloor ns\rfloor}^{(p)+}(t)\right)
\quad\text{for}\quad s\in(0,1),\,t\in\mathbb{R},
\end{equation}
where $\mathbb{F}_k^{(p)-}$ is the $p$-fold integrated empirical d.f. based upon
the $k$ first observations while $\mathbb{F}_{n-k}^{(p)+}$ is that based upon
the $(n-k)$ last ones. In (\ref{alphatilde}) we extend the definition of
$\mathbb{F}_k^{(p)-}$ and $\mathbb{F}_k^{(p)+}$ to the case where $k=0$ by
setting $\mathbb{F}_0^{(p)-}=\mathbb{F}_0^{(p)+}=0$, so that
$\widetilde{\alpha}_n^{(p)}(s,t)=0$ if $s\in(0,1/n)$.

We can define the r.v.'s $X_1,\ldots ,X_{\lfloor ns\rfloor}$ and
$X_{\lfloor ns\rfloor+1},\ldots ,X_n$ on a probability space on which we can
simultaneously construct two Kiefer processes
$\{\mathbb{K}_1(s,u): s\in\mathbb{R}, u\in[0,1]\}$ and
$\{\mathbb{K}_2(s,u): s\in\mathbb{R}, u\in[0,1]\}$ such that the ``restricted''
processes $\{\mathbb{K}_1(s,u): s\in[1,n/2], u\in[0,1]\}$ and
$\{\mathbb{K}_2(s,u): s\in[n/2,n], u\in[0,1]\}$ are independent. It turns out
that a natural approximation of
$\big\{\widetilde{\alpha}_n^{(p)}:n\in\mathbb{N}^*\big\}$
is given by the sequence of Gaussian processes
$\Big\{\overset{\text{\tiny o}}{\mathbb{K}}\vphantom{K}_n^{(p)}(s,F(t)) :
s\in[0,1], t\in\mathbb{R}, n\in\mathbb{N}^*\Big\}$ defined by
\begin{equation}\label{Knp}
\overset{\text{\tiny o}}{\mathbb{K}}\vphantom{K}_n^{(p)}(s,u)
:=\frac{1}{p!}\,u^p\,\overset{\text{\tiny o}}{\mathbb{K}}_n(s,u)
\quad\text{for}\quad s,u\in[0,1],\,n\in\mathbb{N}^*,
\end{equation}
where, for each $n\in\mathbb{N}^*$, $\overset{\text{\tiny o}}{\mathbb{K}}_n:=
\big\{\overset{\text{\tiny o}}{\mathbb{K}}_n(s,u):s,u\in[0,1]\big\}$
is the Gaussian process defined by
$$
\overset{\text{\tiny o}}{\mathbb{K}}_n(s,u):=\!
\begin{cases}
\frac{1}{\sqrt{n}}\big[\,\mathbb{K}_2(\lfloor ns\rfloor,u)
-s(\mathbb{K}_1(\lfloor n/2\rfloor,u)+\mathbb{K}_2(\lfloor n/2\rfloor,u))\big]
&
\!\!\! \text{for } s\in\!\big[0,\frac{1}{2}\big], u\in[0,1],
\\[1ex]
\frac{1}{\sqrt{n}}\big[\!-\mathbb{K}_1(\lfloor n(1-s)\rfloor,u)
+(1-s)(\mathbb{K}_1(\lfloor n/2\rfloor,u)+\mathbb{K}_2(\lfloor n/2\rfloor,u))\big]
&
\!\!\! \text{for } s\in\!\big[\frac{1}{2},1\big], u\in[0,1].
\end{cases}
$$
More precisely, we have the following result.
\begin{theorem}\label{theorem1}
On a suitable probability space, it is possible to define
$\big\{\widetilde{\alpha}_n^{(p)}:n\in\mathbb{N}^*\big\}$,
jointly with a sequence of Gaussian processes
$\Big\{\overset{\text{\rm\tiny o}}{\mathbb{K}}\vphantom{K}_n^{(p)}:
n\in\mathbb{N}^*\Big\}$ as above,
such that, under $\mathcal{H}_0''$, with probability~$1$, as $n \to\infty$,
$$
\sup_{s\in (0,1)}\sup_{t\in\mathbb{R}}\left| \widetilde{\alpha}_n^{(p)}(s,t)
-\overset{\text{\rm\tiny o}}{\mathbb{K}}\vphantom{K}_n^{(p)}(s,F(t))\right|
=\mathcal{O}\!\left(\frac{(\log n)^2}{\sqrt{n}}\right)\!.
$$
\end{theorem}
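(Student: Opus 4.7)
The plan is to reduce to the classical $p=0$ change-point problem via the polynomial identity of Proposition~\ref{expFp}, and then to invoke Theorem~\ref{kieferapproximation} separately on the two halves of the sample.

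The first step is linearization. Writing $\mathbb{F}_k^{(p)-}(t)=\binom{k\mathbb{F}_k^-(t)+p}{p+1}/k^{p+1}$ and expanding the falling factorial around $kF(t)$, the Chung-Smirnov LIL $\sup_t|\mathbb{F}_k^-(t)-F(t)|=\mathcal{O}(\sqrt{\log\log k/k})$ a.s.\ shows that only the linear-in-$(\mathbb{F}_k^--F)$ term survives above the target error:
\begin{equation*}
\mathbb{F}_k^{(p)-}(t)-F^{(p)}(t)=\frac{F(t)^p}{p!}\big(\mathbb{F}_k^-(t)-F(t)\big)+\mathcal{O}\!\left(\frac{\log\log n}{k}\right)\quad\text{a.s.,}
\end{equation*}
uniformly in $t$, and symmetrically for $\mathbb{F}_{n-k}^{(p)+}$. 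The deterministic terms $F^{(p)}(t)$ cancel in the difference, and since $k(n-k)/n^{3/2}\leq\sqrt{n}$ one obtains
\begin{equation*}
\widetilde{\alpha}_n^{(p)}(s,t)=\frac{F(t)^p}{p!}\,\widetilde{\alpha}_n(s,t)+\mathcal{O}\!\left(\frac{\log\log n}{\sqrt{n}}\right)\quad\text{a.s.,}
\end{equation*}
reducing matters to the $p=0$ change-point process.

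The second step is the Kiefer approximation of $\widetilde{\alpha}_n(s,t)$. For $s\in[0,1/2]$, write $k=\lfloor ns\rfloor\leq\lfloor n/2\rfloor$ and decompose the backward counts by splitting at $\lfloor n/2\rfloor$:
\begin{equation*}
(n-k)\big(\mathbb{F}_{n-k}^+-F\big)(t)=\lfloor n/2\rfloor\big(\mathbb{F}_{\lfloor n/2\rfloor}^--F\big)(t)-k\big(\mathbb{F}_k^--F\big)(t)+\lfloor n/2\rfloor\big(\mathbb{F}_{\lfloor n/2\rfloor}^{(2)}-F\big)(t),
\end{equation*}
where $\mathbb{F}_{\lfloor n/2\rfloor}^{(2)}$ is the empirical d.f.\ of the second half $X_{\lfloor n/2\rfloor+1},\ldots,X_n$. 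Apply Theorem~\ref{kieferapproximation} with $p=0$ independently to each half to construct two independent Kiefer processes $\mathbb{K}_1,\mathbb{K}_2$ with simultaneous error $\mathcal{O}((\log n)^2)$. Inserting these into
\begin{equation*}
\widetilde{\alpha}_n(s,t)=\frac{n-k}{n^{3/2}}\,k(\mathbb{F}_k^--F)(t)-\frac{k}{n^{3/2}}(n-k)(\mathbb{F}_{n-k}^+-F)(t)
\end{equation*}
and using that the two contributions involving $\mathbb{K}_1(k,F(t))$ have coefficients summing to $1/\sqrt{n}$ yields
\begin{equation*}
\widetilde{\alpha}_n(s,t)=\frac{1}{\sqrt{n}}\left[\mathbb{K}_1(k,F(t))-\tfrac{k}{n}\big(\mathbb{K}_1(\lfloor n/2\rfloor,F(t))+\mathbb{K}_2(\lfloor n/2\rfloor,F(t))\big)\right]+\mathcal{O}\!\left(\frac{(\log n)^2}{\sqrt{n}}\right).
\end{equation*}
Replacing $k/n$ by $s$ costs only $\mathcal{O}(1/\sqrt{n})$, since the bracket is $\mathcal{O}(\sqrt{n})$ and $|k/n-s|\leq 1/n$, so the bracket coincides with $\overset{\text{\rm\tiny o}}{\mathbb{K}}_n(s,F(t))$ modulo relabeling of $\mathbb{K}_1,\mathbb{K}_2$ to match the convention of the paper. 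Multiplying by $F(t)^p/p!$ and combining with the first step gives the claim for $s\in[0,1/2]$; the case $s\in[1/2,1]$ is handled by the symmetric decomposition at $\lfloor n/2\rfloor$, swapping the roles of the two halves.

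The main technical obstacle is uniformity in $(s,t)$, especially when $\min(k,n-k)$ is of order at most $\log n$, where the linearization bound $\mathcal{O}(\log\log n/k)$ degenerates. In this boundary regime, however, $|\widetilde{\alpha}_n^{(p)}(s,t)|\leq 2\min(k,n-k)/\sqrt{n}=\mathcal{O}(\log n/\sqrt{n})$ by the trivial estimate $0\leq\mathbb{F}^{(p)}\leq 1$, and a parallel bound holds for $|\overset{\text{\rm\tiny o}}{\mathbb{K}}_n^{(p)}(s,F(t))|$ via Gaussian tail controls on Kiefer increments, so the triangle inequality already delivers the target rate there. Uniformity in $t$ is automatic, as every cited Kiefer approximation is uniform in the spatial variable.
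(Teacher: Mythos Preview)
Your proof is correct and follows essentially the same route as the paper's: linearize via the polynomial representation of $\mathbb{F}_n^{(p)}$ to reduce to the $p=0$ change-point process (the paper does this through the decomposition $\mathrm{I}_n-\mathrm{II}_n+\mathrm{III}_n+\mathrm{IV}_n$, which is algebraically the same step), then approximate each half of the sample by an independent Kiefer process with KMT error $\mathcal{O}((\log n)^2)$ and combine. Your final paragraph on the boundary regime is unnecessary---the linearization remainder $\mathcal{O}(\log\log n/k)$, once multiplied by $k(n-k)/n^{3/2}$, already gives $\mathcal{O}(\log\log n/\sqrt n)$ uniformly in $k$, so no separate treatment of small $\min(k,n-k)$ is needed; a couple of minor slips (the index on the second-half empirical d.f.\ should be $n-\lfloor n/2\rfloor$, and the ``bracket'' is $\mathcal{O}(\sqrt{n\log\log n})$ rather than $\mathcal{O}(\sqrt{n})$) do not affect the argument.
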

According to \cite{Cosorgo1997}, a way to test change-point is to use the
following statistics:
\begin{equation}\label{proempri}
\sigma_n^{(p)}:=\sup_{s\in (0,1)}\sup_{t\in\mathbb{R}}
\left|\widetilde{\alpha}_n^{(p)}(s,t)\right|.
\end{equation}

The corollary below is a consequence of Theorem~\ref{theorem1} which
can be proved by following exactly the same lines of \cite{Bouzebdaetal2015}.
\begin{corollary}\label{coroltau}
If $\mathcal{H}_0''$ holds true, then we have the convergence in distribution,
as $n\to\infty$,
$$
\sigma_n^{(p)}\stackrel{\mathcal{L}}{\longrightarrow} \sup_{s,u\in[0,1]}
\left|\overset{\text{\rm\tiny o}}{\mathbb{K}}\vphantom{K}^{(p)}(s,u)\right|,
$$
where $\overset{\text{\rm\tiny o}}{\mathbb{K}}\vphantom{K}^{(p)}
=\big\{\overset{\text{\rm\tiny o}}{\mathbb{K}}\vphantom{K}^{(p)}(s,u):s,u\in[0,1]\big\}$
is a Gaussian process with mean zero and covariance
$$
\mathbb{E}\!\left(\overset{\text{\rm\tiny o}}{\mathbb{K}}\vphantom{K}^{(p)}(s,u)
\,\overset{\text{\rm\tiny o}}{\mathbb{K}}\vphantom{K}^{(p)}(s',u')\right)
=\frac{1}{p!^2}\,u^pu'^p(u\wedge u'-uu')(s\wedge s'-ss').
$$
\end{corollary}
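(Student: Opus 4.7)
The plan is to derive the corollary from Theorem~\ref{theorem1} via a standard weak-convergence plus continuous-mapping argument. First I would apply Theorem~\ref{theorem1} on the joint probability space to write, almost surely,
$$\sigma_n^{(p)} = \sup_{s\in(0,1)}\sup_{t\in\mathbb{R}} \left|\overset{\text{\rm\tiny o}}{\mathbb{K}}\vphantom{K}_n^{(p)}(s,F(t))\right| + \mathcal{O}\!\left(\frac{(\log n)^2}{\sqrt{n}}\right)\!,$$
so that by Slutsky it suffices to identify the weak limit of the Gaussian term. Continuity of $F$ makes $F(\mathbb{R})$ dense in $[0,1]$, while the sample paths of $\overset{\text{\rm\tiny o}}{\mathbb{K}}\vphantom{K}_n^{(p)}(s,\cdot)$ are continuous and vanish at $u=0$ (through the $u^p$ prefactor) and at $u=1$ (because each Kiefer component is tied down there); consequently $\sup_{t\in\mathbb{R}}|\overset{\text{\rm\tiny o}}{\mathbb{K}}\vphantom{K}_n^{(p)}(s,F(t))|=\sup_{u\in[0,1]}|\overset{\text{\rm\tiny o}}{\mathbb{K}}\vphantom{K}_n^{(p)}(s,u)|$, and the problem reduces to
$$\sup_{(s,u)\in[0,1]^2} \left|\overset{\text{\rm\tiny o}}{\mathbb{K}}\vphantom{K}_n^{(p)}(s,u)\right| \stackrel{\mathcal{L}}{\longrightarrow} \sup_{(s,u)\in[0,1]^2} \left|\overset{\text{\rm\tiny o}}{\mathbb{K}}\vphantom{K}^{(p)}(s,u)\right|\!.$$

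Second, I would prove weak convergence of $\overset{\text{\rm\tiny o}}{\mathbb{K}}\vphantom{K}_n^{(p)}$ to $\overset{\text{\rm\tiny o}}{\mathbb{K}}\vphantom{K}^{(p)}$ in $C([0,1]^2)$. For finite-dimensional convergence the independence of $\mathbb{K}_1$ and $\mathbb{K}_2$ on the relevant index sets combined with $\mathrm{Cov}(\mathbb{K}(s,u),\mathbb{K}(t,v))=(s\wedge t)(u\wedge v-uv)$ allows a direct computation: after using $\lfloor n\tau\rfloor/n\to\tau$, the covariance of $\overset{\text{\rm\tiny o}}{\mathbb{K}}_n(s,u)$ and $\overset{\text{\rm\tiny o}}{\mathbb{K}}_n(s',u')$ collapses to $(u\wedge u'-uu')(s\wedge s'-ss')$, and multiplication by $u^p u'^p/p!^2$ recovers the covariance of $\overset{\text{\rm\tiny o}}{\mathbb{K}}\vphantom{K}^{(p)}$ announced in the statement. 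Tightness in $C([0,1]^2)$ then follows from a Kolmogorov-type moment bound on the centered Gaussian field $\overset{\text{\rm\tiny o}}{\mathbb{K}}\vphantom{K}_n^{(p)}$: the increment variance in $s$ is controlled by that of the underlying Kiefer process and in $u$ by that of a Brownian bridge, treating the two regimes $s\leq 1/2$ and $s\geq 1/2$ separately.

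The conclusion then follows from the continuous mapping theorem applied to the sup-norm functional $f\mapsto \sup_{[0,1]^2}|f|$, which is continuous on $(C([0,1]^2),\|\cdot\|_\infty)$. The main technical obstacle is the tightness verification across the junction $s=1/2$: the two halves of $\overset{\text{\rm\tiny o}}{\mathbb{K}}\vphantom{K}_n^{(p)}$ must be estimated separately and glued together, which is exactly the step where the parallel argument in \cite{Bouzebdaetal2015} referenced in the statement enters.
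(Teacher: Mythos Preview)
Your approach is correct and is exactly what the paper indicates: it states that the corollary ``is a consequence of Theorem~\ref{theorem1} which can be proved by following exactly the same lines of \cite{Bouzebdaetal2015}'', i.e.\ strong approximation followed by weak convergence of the Gaussian approximant and the continuous mapping theorem. One small technical slip: the process $\overset{\text{\rm\tiny o}}{\mathbb{K}}\vphantom{K}_n^{(p)}(s,u)$ is \emph{not} in $C([0,1]^2)$ because of the floor in $\mathbb{K}_i(\lfloor ns\rfloor,u)$, so either carry out the tightness argument in the appropriate Skorohod space, or first observe that $n^{-1/2}\sup_{s,u}|\mathbb{K}_i(\lfloor ns\rfloor,u)-\mathbb{K}_i(ns,u)|\to 0$ a.s.\ by the modulus of continuity of the Kiefer process and work with the continuous version; either fix is routine and does not affect your outline.
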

One has $\overset{\text{\tiny o}}{\mathbb{K}}\vphantom{K}^{(p)}(s,u) = \frac{1}{p!}\,u^p\,
\overset{\text{\tiny o}}{\mathbb{K}}(s,u)$ where
$\overset{\text{\tiny o}}{\mathbb{K}}$ is a tied-down Kiefer process.
We refer to \cite{Csorgo1997} for more details on the process
$\mathbb{K}^{(p)}$ in the case where $p=1$.

Actually, according to \cite{Cosorgo1997}, the most appropriate way to test
change-point is to use the following weighted statistic:
\begin{equation}\label{proempri2}
\sigma_{n,w}^{(p)}:=\sup_{s\in (0,1)}\sup_{t\in\mathbb{R}}
\frac{\left|\widetilde{\alpha}_n^{(p)}(s,t)\right|}{w\left(\lfloor ns\rfloor /n\right)}
\end{equation}
where $w$ is a positive function defined on $(0,1)$, increasing in a neighborhood
of zero and decreasing in a neighborhood of one satisfying the condition
$$
I(w,\boldsymbol{\varepsilon}):= \int_0^1\exp\left(-\frac{\boldsymbol{\varepsilon}
w^2(s)}{s(1-s)}\right)\,\frac{ds}{s(1-s)} <\infty
$$
for some constant $\boldsymbol{\varepsilon}>0$. For a history and further
applications of $I(w,\boldsymbol{\varepsilon})$, we refer to \cite{Csorgoho1993},
Chapter 4. From \cite{Szyszkowicz(1992)}, an example of such function $w$ is
given by
$$
w(t):=\left( t(1-t)\log \log \frac{1}{t(1-t)}\right)^{\!1/2}
\quad\text{for}\quad t\in(0,1).
$$
By using similar techniques to those which are developed in \cite{Csorgo1997},
one may show that
$$
\sigma_{n,w}^{(p)}\stackrel{\mathcal{L}}{\longrightarrow} \sup_{s,u\in[0,1]}
\frac{\left|\overset{\text{\rm\tiny o}}{\mathbb{K}}\vphantom{K}^{(p)}(s,u)\right|}{w(s)}.
$$
For more details, we refer to \cite{Bouzebda2014AN}.
\begin{remark}
As in \cite{Szysz(1994)}, we mention that the statistic given by
(\ref{proempri}) should be more powerful for detecting changes that occur in
the middle, i.e., near $n/2$, where $k/n(1-k/n)$ reaches its maximum, than for
the ones occurring near the end points. The advantage of using the weighted
statistic defined in (\ref{proempri2}) is the detection of changes that occur
near the end points, while retaining the sensitivity to possible changes in
the middle as well.
\end{remark}

We hope that the results presented in Sections~\ref{sectiontwo} and
\ref{sectionchnae} will be the prototypes of other various applications.

\section{Strong approximation of the integrated empirical process when
parameters are estimated}\label{Section5}

In this section, we are interested in the strong approximation of the
integrated empirical process when parameters are estimated. Our approach is in
the same spirit of \cite{Burke-Csorgo}. Let us introduce, for each
$n\in\mathbb{N}^*$, the \emph{$p$-fold integrated estimated empirical process}
$\widehat{\alpha}_n^{(p)}$:
\begin{equation}\label{alphabarchap}
\widehat{\alpha}_n^{(p)}(t):=\sqrt{n}\left(\mathbb{F}_n^{(p)}(t)
-F^{(p)}\big(t,\widehat{\boldsymbol{\theta}}_n\big)\right)
\quad\text{for}\quad t\in\mathbb{R},
\end{equation}
where $\big\{\widehat{\boldsymbol{\theta}}_n: n\in\mathbb{N}^*\big\}$ is a
sequence of estimators of a parameter $\boldsymbol{\theta}$ from a family of
d.f.'s $\{ F(t,\boldsymbol{\theta}): t\in\mathbb{R}, \,\boldsymbol{\theta}
\in\boldsymbol{\Theta}\}$ ($\boldsymbol{\Theta}$ being a subset of
$\mathbb{R}^d$ and $d$ a fixed positive integer) related to a sequence of
i.i.d. r.v.'s $\{X_i:i\in\mathbb{N}^*\}$. Let us mention that a general study
of the weak convergence of the estimated empirical process was carried out by
\cite{Durbin1973}. For a more recent reference, we may refer to \cite{Genz2006}
where the authors investigated the empirical processes with estimated
parameters under auxiliary information and provided some results regarding the
bootstrap in order to evaluate the limiting laws.

Let us introduce some notations.
\begin{enumerate}[label=(\thesection.\arabic*)]
\item
The transpose of a vector $V$ of $\mathbb{R}^d$ will be denoted by $V^\top$.
\item
The norm $\| \cdot\|$ on $\mathbb{R}^d$ is defined by
$$
\| (y_1,\ldots,y_d)\|:=\max_{1\leq i \leq d}|y_i|.
$$
\item
For a function $(t,\boldsymbol{\theta})\mapsto g(t,\boldsymbol{\theta})$ where
$\boldsymbol{\theta}=(\theta_1, \ldots,\theta_d)\in \mathbb{R}^d$,
$\nabla_{\boldsymbol{\theta}}g(t,\boldsymbol{\theta}_0)$
denotes the vector in $\mathbb{R}^d$ of partial derivatives
$\big((\partial g/\partial\theta_1) (t,\boldsymbol{\theta}),
\ldots,(\partial g/\partial\theta_d) (t,\boldsymbol{\theta}))\big)$
evaluated at $\boldsymbol{\theta}=\boldsymbol{\theta}_0$,
and $\nabla_{\boldsymbol{\theta}}^2g(t,\boldsymbol{\theta})$
denotes the $d\times d$ matrix of second order partial derivatives
$\big((\partial^2g/\partial\theta_i\partial\theta_j) (t,\boldsymbol{\theta}))
\big)_{1\leq i,j\leq d}$.
\item
For a vector-valued function $x\mapsto V(x)=(v_1(x),\dots,v_d(x))$ defined on $\mathbb{R}$,
$\int V$ denotes the vector
$$\left(\int_{\mathbb{R}} v_1(x_{1})\,dx_{1},\dots,\int_{\mathbb{R}} v_d(x_{d})\,dx_{d}\right).$$
\end{enumerate}

Next, we write out the set of all conditions (those of \cite{Burke-Csorgo})
which we will use in the sequel.
\begin{enumerate}[label=(\roman*)]
\item
The estimator $\widehat{\boldsymbol{\theta}}_n$ admits the following form:
for each $n\in\mathbb{N}^*$,
$$
\sqrt{n}\left(\widehat{\boldsymbol{\theta}}_n-\boldsymbol{\theta}_0\right)
=\frac{1}{\sqrt{n}}\sum_{i=1}^n l(X_i,\boldsymbol{\theta}_0)
+\boldsymbol{\varepsilon}_n,
$$
where $\boldsymbol{\theta}_0$ is the theoretical true value of
$\boldsymbol{\theta}$, $l(\cdot,\boldsymbol{\theta}_0)$ is a measurable
$d$-dimensional vector-valued function, and $\boldsymbol{\varepsilon}_n$
converges to zero as $n\to\infty$ in a manner to be specified later on.
Notice that
$$
\frac{1}{\sqrt{n}}\sum_{i=1}^n l(X_i,\boldsymbol{\theta}_0)
=\sqrt{n}\int_{-\infty}^{\infty} l(s,\boldsymbol{\theta}_0)\,d\mathbb{F}_n(s).
$$

\item
The mean value of $l(X_i,\boldsymbol{\theta}_0)$ vanishes:
$$\mathbb{E}\!\left(l(X_i,\boldsymbol{\theta}_0)\right)=0.$$
\item
The matrix $M(\boldsymbol{\theta}_0):=\mathbb{E}\!
\left(l(X_i,\boldsymbol{\theta}_0)^\top l(X_i,\boldsymbol{\theta}_0)\right)$
is a finite nonnegative definite $d\times d$ matrix.
\item
The vector-valued function $(t,\boldsymbol{\theta})\mapsto
\nabla_{\boldsymbol{\theta}}F(t,\boldsymbol{\theta})$ is uniformly
continuous in $t\in\mathbb{R}$ and $\boldsymbol{\theta} \in\mathbf{V}$, where
$\mathbf{V}$ is the closure of a given neighborhood of $\boldsymbol{\theta}_0$.
\item
Each component of the vector-valued function $t\mapsto
l(t,\boldsymbol{\theta}_0)$ is of bounded variation in $t$ on each finite interval of $\mathbb{R}$.
\item
The vector-valued function $t\mapsto\nabla_{\boldsymbol{\theta}}
F(t,\boldsymbol{\theta}_0)$ is uniformly bounded in
$t\in\mathbb{R}$, and the vector-valued function
$(t,\boldsymbol{\theta})\mapsto \nabla^2_{\boldsymbol{\theta}}F(t,\boldsymbol{\theta})$
is uniformly bounded in $t\in\mathbb{R}$ and $\boldsymbol{\theta} \in\mathbf{V}$.
\item
Set
$$
\ell(s,\boldsymbol{\theta}_0):=l\!\left(F^{-1}(s,\boldsymbol{\theta}_0),\boldsymbol{\theta}_0\right)
\quad\text{for}\quad s\in(0,1)
$$
where
$$
F^{-1}(s,\boldsymbol{\theta}_0)=\inf\{ t\in\mathbb{R}:F(t,\boldsymbol{\theta}_0)\geq s\}.
$$
The limiting relations below hold:
$$
\lim_{s\searrow 0} \sqrt{s \log \log (1/s)}\,
\left\|\ell(s,\boldsymbol{\theta}_0)\right\| =0
$$
and
$$
\lim_{s\nearrow 1} \sqrt{(1-s) \log \log [1/(1-s)]}\,
\left\|\ell(s,\boldsymbol{\theta}_0)\right\| =0,
$$
\item
Set
$$
\ell'_s(s,\boldsymbol{\theta}_0):=\frac{\partial\ell}{\partial s}(s,\boldsymbol{\theta}_0)
\quad\text{for}\quad s\in(0,1).
$$
The partial derivative $\ell'_s(s,\boldsymbol{\theta}_0)$ exist for every $s\in (0,1)$ and
the bounds below hold: there is a positive constant $C$ such that
$$
s\left\|\ell'_s(s,\boldsymbol{\theta}_0)\right\| \leq C \quad\text{for all }
s\in\big(0,\textstyle{\frac{1}{2}}\big)
$$
and
$$
(1-s)\left\|\ell'_s(s,\boldsymbol{\theta}_0)\right\| \leq C \quad\text{for all }
s\in\big(\textstyle{\frac{1}{2}},1\big).
$$
\end{enumerate}

Now, we state an analogous result to Theorem 3.1 of \cite{Burke-Csorgo}. For each
$n\in\mathbb{N}^*$, let $\{G_n(t):t\in\mathbb{R}\}$ be the process defined by
\begin{align*}
G_n(t)
&
:=\frac{1}{\sqrt{n}}\left( \mathbb{K}(n,F(t,\boldsymbol{\theta}_0))
- \left(\int_{\mathbb{R}} l(s,\boldsymbol{\theta}_0)\,
d_s\mathbb{K}(n,F(s,\boldsymbol{\theta}_0))\right)
\nabla_{\boldsymbol{\theta}}F(t,\boldsymbol{\theta}_0)^\top\right)
\\
&
=\frac{1}{\sqrt{n}}\left(\mathbb{K}(n,F(t,\boldsymbol{\theta}_0))
- \mathbf{W}(n)\nabla_{\boldsymbol{\theta}}F(t,\boldsymbol{\theta}_0)^\top\right)
\quad\text{for}\quad t\in\mathbb{R},
\end{align*}
where we set
$$
\mathbf{W}(\tau):=\int_{\mathbb{R}} l(s,\boldsymbol{\theta}_0)
\,d_s\mathbb{K}(\tau,F(s,\boldsymbol{\theta}_0))
\quad\text{for}\quad \tau\geq 0.
$$
The process $\{\mathbf{W}(\tau): \tau\geq 0\}$ is a $d$-dimensional Brownian
motion with a covariance matrix of rank that of $M(\boldsymbol{\theta}_0)$.
The estimated empirical process given by $\widehat{\alpha}_n^{(p)}(t)$ defined
by (\ref{alphabarchap}) will be approximated by the sequence of processes
$\big\{G_n^{(p)}:n\in\mathbb{N}^*\big\}$ defined by
\begin{equation}\label{Gnp}
G_n^{(p)}(t):=\frac{1}{p!}\,F(t,\boldsymbol{\theta}_0)^p\,G_n(t)
\quad\text{for}\quad t\in\mathbb{R},
\end{equation}
as described in the next theorem. Set
\begin{equation}\label{epsilon}
\boldsymbol{\varepsilon}_n^{(p)}:=\sup_{t\in\mathbb{R}}\left|
\widehat{\alpha}_n^{(p)}(t) -G_n^{(p)}(t)\right|\!.
\end{equation}
\begin{theorem}\label{theoremBurkeetal}
Suppose that the sequence of estimators $\big\{\widehat{\boldsymbol{\theta}}_n:
n\in\mathbb{N}^*\big\}$ satisfies conditions {\rm (i), (ii)} and {\rm (iii)}.
Then, as $n\to\infty$,
\begin{enumerate}[label=(\alph*)]
\item
$\boldsymbol{\varepsilon}_n^{(p)}\stackrel{\mathbb{P}}{\longrightarrow} 0$
if Conditions {\rm (iv), (v)} hold and $\boldsymbol{\varepsilon}_n
\stackrel{\mathbb{P}}{\longrightarrow} 0$;
\item
$\boldsymbol{\varepsilon}_n^{(p)}\stackrel{\text{a.s.}}{\longrightarrow} 0$
if Conditions {\rm (vi)--(viii)} hold and $\boldsymbol{\varepsilon}_n
\stackrel{\text{a.s.}}{\longrightarrow} 0$;
\item
$\boldsymbol{\varepsilon}_n^{(p)}=\mathcal{O}(\max(h(n),n^{-\boldsymbol{\epsilon}}))$
for some $\boldsymbol{\epsilon}>0$ if Conditions {\rm(vi)--(viii)} hold and
$\boldsymbol{\varepsilon}_n=\mathcal{O}(h(n))$ for some function $h$ satisfying $h(n)>0$
and $h(n)\to 0.$
\end{enumerate}
\end{theorem}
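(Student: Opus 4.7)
The plan is to decompose $\widehat{\alpha}_n^{(p)}(t)-G_n^{(p)}(t)$ into three manageable pieces and control each using the tools already built in the paper together with the Burke--Cs\"org\H o linearization technique. First, write
$$
\widehat{\alpha}_n^{(p)}(t)=\alpha_n^{(p)}(t)-\sqrt{n}\bigl(F^{(p)}(t,\widehat{\boldsymbol{\theta}}_n)-F^{(p)}(t,\boldsymbol{\theta}_0)\bigr),
$$
where $\alpha_n^{(p)}$ is formed with the true parameter $\boldsymbol{\theta}_0$. A second-order Taylor expansion of $\boldsymbol{\theta}\mapsto F^{(p)}(t,\boldsymbol{\theta})$ at $\boldsymbol{\theta}_0$, together with the key identity
$$
\nabla_{\boldsymbol{\theta}}F^{(p)}(t,\boldsymbol{\theta}_0)=\frac{F(t,\boldsymbol{\theta}_0)^p}{p!}\,\nabla_{\boldsymbol{\theta}}F(t,\boldsymbol{\theta}_0),
$$
(an immediate consequence of $F^{(p)}=F^{p+1}/(p+1)!$ from Proposition~\ref{expFp}) yields a remainder $R_n(t)$ satisfying $|R_n(t)|\leq C\|\widehat{\boldsymbol{\theta}}_n-\boldsymbol{\theta}_0\|^2$ uniformly in $t$, since the product rule and condition~(vi) make $\nabla_{\boldsymbol{\theta}}^2F^{(p)}$ uniformly bounded on $\mathbb{R}\times\mathbf{V}$.

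Inserting this expansion and the definition~(\ref{Gnp}) of $G_n^{(p)}$, one obtains
$$
\widehat{\alpha}_n^{(p)}(t)-G_n^{(p)}(t)=\underbrace{\alpha_n^{(p)}(t)-\tfrac{F(t,\boldsymbol{\theta}_0)^p}{p!\sqrt{n}}\,\mathbb{K}(n,F(t,\boldsymbol{\theta}_0))}_{(\mathrm{I})}-\underbrace{\tfrac{F(t,\boldsymbol{\theta}_0)^p}{p!}\Bigl[\sqrt{n}(\widehat{\boldsymbol{\theta}}_n{-}\boldsymbol{\theta}_0)-\tfrac{\mathbf{W}(n)}{\sqrt{n}}\Bigr]\nabla_{\boldsymbol{\theta}}F(t,\boldsymbol{\theta}_0)^\top}_{(\mathrm{II})}-\underbrace{\sqrt{n}\,R_n(t)}_{(\mathrm{III})}.
$$
Piece~(I) is exactly what Theorem~\ref{kieferapproximation} handles at $k=n$, yielding $\sup_t|(\mathrm{I})|=\mathcal{O}((\log n)^2/\sqrt{n})$ almost surely. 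For piece~(II), condition~(i) combined with $\mathbb{E}\,l(X_i,\boldsymbol{\theta}_0)=0$ from~(ii) gives $\sqrt{n}(\widehat{\boldsymbol{\theta}}_n-\boldsymbol{\theta}_0)=\int l(s,\boldsymbol{\theta}_0)\,d\alpha_n(s)+\boldsymbol{\varepsilon}_n$. An integration by parts in the Riemann--Stieltjes sense (justified by~(v) or~(viii)) reduces the bracket to an expression in $\sup_s|\alpha_n(s)-\mathbb{K}(n,F(s,\boldsymbol{\theta}_0))/\sqrt{n}|$, i.e., the classical KMT bound (the case $p=0$ of Theorem~\ref{kieferapproximation}); the residue $\boldsymbol{\varepsilon}_n$ is carried along unchanged. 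Piece~(III) is bounded by $C\|\sqrt{n}(\widehat{\boldsymbol{\theta}}_n-\boldsymbol{\theta}_0)\|^2/\sqrt{n}$, which is $\mathcal{O}(1/\sqrt{n})$ by stochastic boundedness of $\sqrt{n}(\widehat{\boldsymbol{\theta}}_n-\boldsymbol{\theta}_0)$ under~(i)--(iii).

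Assembling the three contributions proves each assertion separately: \textbf{(a)} the in-probability version follows from the weak form of~(i) and the uniform continuity condition~(iv), which controls the replacement of $\nabla_{\boldsymbol{\theta}}F(\cdot,\widehat{\boldsymbol{\theta}}_n)$ by $\nabla_{\boldsymbol{\theta}}F(\cdot,\boldsymbol{\theta}_0)$ in~(II); \textbf{(b)} the almost sure statement exploits the boundedness in~(vi) and the boundary control of $\ell(s,\boldsymbol{\theta}_0)$ from~(vii)--(viii); \textbf{(c)} the quantitative rate combines the explicit $\mathcal{O}((\log n)^2/\sqrt{n})$ from Theorem~\ref{kieferapproximation} with the assumed rate $h(n)$ for $\boldsymbol{\varepsilon}_n$, yielding $\mathcal{O}(\max(h(n),n^{-\boldsymbol{\epsilon}}))$ for any $\boldsymbol{\epsilon}\in(0,1/2)$. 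The main obstacle is the integration-by-parts step in piece~(II): the score-type function $\ell(s,\boldsymbol{\theta}_0)$ may blow up as $s\to 0,1$, so one must tame the boundary contributions by pairing the tail estimates in~(vii)--(viii) with the weighted approximation of the uniform process (Corollary~\ref{corollary-O} with $p=0$), a step in which the weight exponent $\nu$ must be chosen delicately to absorb the growth of $\ell$. This is exactly where the upgrade from~(iv)--(v) to~(vi)--(viii) is needed to pass from convergence in probability to almost sure convergence with a rate.
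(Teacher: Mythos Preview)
Your argument is essentially sound, but it is organized quite differently from the paper's. The paper does \emph{not} Taylor-expand $F^{(p)}(t,\boldsymbol{\theta})$ in $\boldsymbol{\theta}$; instead it applies the polynomial identity~(\ref{alphanp}) \emph{mutatis mutandis} with $F(t,\widehat{\boldsymbol{\theta}}_n)$ in place of $F(t)$, obtaining
\[
\widehat{\alpha}_n^{(p)}(t)=\frac{1}{p!}\,F\big(t,\widehat{\boldsymbol{\theta}}_n\big)^p\,\widehat{\alpha}_n(t)+\text{(lower order)},
\]
and then substitutes $\widehat{\alpha}_n=(\widehat{\alpha}_n-G_n)+G_n$. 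This yields the clean bound
\[
\boldsymbol{\varepsilon}_n^{(p)}\leq \boldsymbol{\eta}_n
+\sup_{t}\big|F(t,\widehat{\boldsymbol{\theta}}_n)-F(t,\boldsymbol{\theta}_0)\big|\cdot\sup_t|G_n(t)|
+\mathcal{O}\!\big(n^{-1/2}\big)\cdot\text{(polynomial in }\boldsymbol{\eta}_n,\sup_t|G_n|\text{)},
\]
where $\boldsymbol{\eta}_n=\sup_t|\widehat{\alpha}_n(t)-G_n(t)|$ is precisely the $p=0$ quantity. The paper then invokes Theorem~3.1 of Burke--Cs\"org\H{o} for $\boldsymbol{\eta}_n$ as a black box, so the integration-by-parts step you isolate in your piece~(II), together with conditions (v) and (vii)--(viii), never has to be revisited. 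Your route---decomposing around the true-parameter process $\alpha_n^{(p)}$ rather than around $\widehat{\alpha}_n$---forces you to redo that part of Burke--Cs\"org\H{o}'s proof inside~(II). Both strategies work; the paper's is shorter because it leverages the existing $p=0$ theorem wholesale, whereas yours makes the dependence on the Kiefer approximation (Theorem~\ref{kieferapproximation}) and on the score-boundary conditions more transparent.

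One point where your write-up wobbles: for part~(a) you invoke the second-order Taylor remainder $|R_n(t)|\leq C\|\widehat{\boldsymbol{\theta}}_n-\boldsymbol{\theta}_0\|^2$, but that uniform bound rests on condition~(vi), which is \emph{not} assumed in~(a). Under~(iv) alone you should instead use the mean-value form
\[
F^{(p)}(t,\widehat{\boldsymbol{\theta}}_n)-F^{(p)}(t,\boldsymbol{\theta}_0)
=(\widehat{\boldsymbol{\theta}}_n-\boldsymbol{\theta}_0)\,\nabla_{\boldsymbol{\theta}}F^{(p)}(t,\boldsymbol{\theta}_n^*)^\top
\]
and absorb the discrepancy $\nabla_{\boldsymbol{\theta}}F^{(p)}(t,\boldsymbol{\theta}_n^*)-\nabla_{\boldsymbol{\theta}}F^{(p)}(t,\boldsymbol{\theta}_0)$ via the uniform continuity in~(iv); this gives $\sqrt{n}\,\|\widehat{\boldsymbol{\theta}}_n-\boldsymbol{\theta}_0\|\cdot o(1)=o_{\mathbb{P}}(1)$ since $\sqrt{n}(\widehat{\boldsymbol{\theta}}_n-\boldsymbol{\theta}_0)=O_{\mathbb{P}}(1)$. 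Your sentence about ``controlling the replacement of $\nabla_{\boldsymbol{\theta}}F(\cdot,\widehat{\boldsymbol{\theta}}_n)$ by $\nabla_{\boldsymbol{\theta}}F(\cdot,\boldsymbol{\theta}_0)$ in~(II)'' hints at this, but as written your piece~(II) already sits at $\boldsymbol{\theta}_0$, so the remark does not match the displayed decomposition. Once this is tidied, the argument goes through.
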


The limiting Gaussian process $G_n^{(p)}$ of Theorem~\ref{theoremBurkeetal}
depends crucially on $F$ and also on the true theoretical value
$\boldsymbol{\theta}_0$. In general, Theorem~\ref{theoremBurkeetal} cannot be
used to test the composite hypothesis :
$$
F\in\{ F(t,\boldsymbol{\theta}): t\in\mathbb{R}, \boldsymbol{\theta}\in\boldsymbol{\Theta}\}.
$$
In order to circumvent this problem, \cite{Burke-Csorgo} proposed an
approximate solution, they introduce another process:
$$
\widehat{G}_n(t):=\frac{1}{\sqrt{n}}\left( \mathbb{K}\big(n,F\big(t,
\widehat{\boldsymbol{\theta}}_n\big)\big)
- \mathbf{W}(n)\nabla_{\boldsymbol{\theta}}F\big(t,
\widehat{\boldsymbol{\theta}}_n\big)^\top\right)\!.
$$
Under some regularity conditions, \cite{Burke-Csorgo} show that (see Theorem~3.2
therein), as $n\to\infty$,
$$
\sup_{t\in\mathbb{R}}\left|\widehat{G}_n(t)-G_n(t)\right|
\stackrel{\mathbb{P}}{\longrightarrow }0.
$$
Setting $\widehat{G}_n^{(p)}(t) := \frac{1}{p!}\,F\big(t,
\widehat{\boldsymbol{\theta}}_n\big)^p\,\widehat{G}_n(t)$,
one can show that, as $n\to\infty$,
\begin{equation}\label{eqreferrrz11}
\sup_{t\in\mathbb{R}}\left|\widehat{G}_n^{(p)}(t)-G_n^{(p)}(t)\right|
\stackrel{\mathbb{P}}{\longrightarrow }0.
\end{equation}
Consequently, we have, as $n\to\infty$,
$$
\sup_{t\in\mathbb{R}}\left| \widehat{\alpha}_n^{(p)}(t)
-\widehat{G}_n^{(p)}(t)\right| \stackrel{\mathbb{P}}{\longrightarrow }0.
$$

\section{Local time of the integrated empirical process}\label{section343}

In this section, we are mainly concerned with the behavior of the local time of
the $p$-fold integrated empirical process. This behavior can be characterized
by using a representation that expresses the integrated empirical process in
terms of a partial sums process, see (\ref{sj}) below. Let us recall
the definition of the process $\beta_n$ given in (\ref{uniformprocesse}) and
let us introduce the modified \textit{$p$-fold integrated uniform empirical process}
$\widetilde{\beta}_n^{(p)}$ defined, for each $n\in\mathbb{N}^*$, by
\begin{align*}
\widetilde{\beta}_n^{(p)}(u):=&\;\int_0^u dv_1\int_0^{v_1} dv_{p-1} \dots
\int_0^{v_{p-1}} \beta_n(v_p)\,dv_p
\\
=&\;\frac{1}{(p-1)!} \int_0^u (u-v)^{p-1}\,\beta_n(v)\,dv\quad\text{for}\quad u\in[0,1].
\end{align*}

In this part, we fo cus on the particular r.v. $\mathcal{A}_n^{(p)}:=\widetilde{\beta}_n^{(p)}(1)$.
It is easily seen that the representation below holds:
$$
\mathcal{A}_n^{(p)}=\sqrt{n}\Bigg(\frac{1}{n}
\sum_{i=1}^n\frac{1}{p!}\left(1-U_i\right)^p
-\frac{1}{(p+1)!} \Bigg)=\frac{S_n^{(p)}}{\sqrt{n}}
$$
where $\big\{S_n^{(p)}:n\in\mathbb{N}^*\big\}$ is the following partial sums
process where the summands are i.i.d. r.v.'s with mean zero:
\begin{equation}\label{sj}
S_n^{(p)}:=\frac{1}{p!}\sum_{i=1}^n\left(\left(1-U_i\right)^p-\frac{1}{p+1}\right)\!.
\end{equation}
This is a random walk with continuously distributed jumps. In the particular
case where $p=1$, we retrieve the representation provided by
\cite{HenzeNikitin2002} p.~185, namely
$$
\mathcal{A}_n^{(1)}=\frac{S_n^{(1)}}{\sqrt{n}}\quad\text{with}\quad
S_n^{(1)}:=\sum_{i=1}^n\left(\frac{1}{2}-U_i \right)\!.
$$
Notice that we are dealing with a sum of strongly non-lattice r.v.'s as, i.e.,
in p.~210 of \cite{BassKhoshnevisan1993}. Indeed, we easily check that the
characteristic function $\chi^{(p)}$ of the $\left(1-U_i\right)^p-1/(p+1)$'s,
namely
$$
\chi^{(p)}(z):=\int_0^1 \exp(\mathrm{i}z(u^p-1/(p+1)))\,du
=\frac{\exp(-\mathrm{i}z/(p+1))}{pz^{1/p}} \int_0^z \exp(
\mathrm{i}v)\frac{dv}{v^{1-1/p}}
$$
satisfies the conditions
$$
\forall z\in\mathbb{R}^*,\,\big|\chi^{(p)}(z)\big|< 1\quad\text{and}\quad
\limsup_{|z|\to\infty}\big|\chi^{(p)}(z)\big|< 1.
$$

Next, we fix a neighborhood $I$ of $0$, e.g., $I=[-1/2,1/2]$, and we define the
local time
\begin{equation}\label{localtime}
\lambda^{(p)}(x,n):=\sum_{i=1}^n\mathbbm{1}_I\big(S_i^{(p)}-x\big)
\quad\text{for}\quad x\in\mathbb{R},\, n\in\mathbb{N}^*.
\end{equation}
The local time $\lambda^{(p)}(x,n)$ represents the number of visits of the
random walk $\big\{S_n^{(p)}:n\in\mathbb{N}^*\big\}$ in the neighborhood
$x+I$ of $x$ up to discrete time $n$.
Our aim is to obtain the rate of the approximation
of the self-intersection local time
$$
L_n^{(p)}(t):=\sum_{1\leq i<j \leq \lfloor nt\rfloor }\int_{\mathbb{R}}
\mathbbm{1}_{I}\big(S_i^{(p)}-x\big)\,\mathbbm{1}_{I}\big(S_j^{(p)}-x\big)\,dx
$$
by the integrated local time of some standard Wiener process.
The quantity $L_n^{(p)}(t)$ enumerates in a certain manner the couples $(i,j)$
of distinct and ordered indices up to time $\lfloor nt\rfloor$ such that
$S_i-S_j$ is less than the diameter of $I$.

To this aim, we recall that, if $\{\mathbb{W}(t): t\geq 0\}$ is the standard Wiener
process with $\mathbb{W}(0)=0$, then its local time process
$\{l(x,t): t \geq 0, x\in\mathbb{R}\}$ is defined as
\begin{equation}\label{Lb}
l(x,t):=\lim_{\boldsymbol{\varepsilon} \searrow  0}\frac{1}{2\boldsymbol{\varepsilon}}
\int_0^t\mathbbm{1}_{\left\{ x-\boldsymbol{\varepsilon}< \mathbb{W}(s) < x
+\boldsymbol{\varepsilon} \right\}}\,ds\quad\text{for}\quad x\in\mathbb{R},\,t \geq 0.
\end{equation}

Following exactly the same lines of \cite{Bouzebdaetal2015}, we can prove
the two following results.
\begin{theorem}
We have, with probability~$1$, as $n \to\infty$,
$$
\sup_{t\in[0,1]} \left|L_n^{(p)}(t)
-\frac{1}{2}\,n^{3/2}\int_{\mathbb{R}} l_n(x,t)^2\,dx\right|
=\mathcal{O}\!\left(n^{5/4}(\log n)^{1/2}(\log \log n)^{1/4}\right)\!,
$$
where $l_n$ is the normalized local time
$$
l_n(x,t):=\frac{1}{\sqrt{n}}\,l\!\left(\sqrt{n}\,x,\lfloor nt\rfloor\right)\!.
$$
\end{theorem}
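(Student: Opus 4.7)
The plan is to reduce the random-walk self-intersection local time $L_n^{(p)}(t)$ to that of an approximating Brownian motion, and then to invoke the occupation time formula twice. Since the summands in (\ref{sj}) are bounded, i.i.d., centered and strongly non-lattice (as already observed just after (\ref{sj})), a Komlós-Major-Tusnády strong approximation for partial sums provides, on a suitable probability space, a standard Brownian motion $\{\mathbb{W}(t):t\geq 0\}$ and a constant $\sigma_p>0$ such that
$$\max_{1\leq k\leq n}\bigl|S_k^{(p)}-\sigma_p\mathbb{W}(k)\bigr|=\mathcal{O}(\log n)\quad\text{a.s.}$$
Performing the $x$-integration in (\ref{localtime}) and using the identity $\int_\mathbb{R}\mathbbm{1}_I(a-x)\mathbbm{1}_I(b-x)\,dx=\phi(b-a)$ with $\phi(y):=(1-|y|)_+$, one rewrites
$$L_n^{(p)}(t)=\sum_{1\leq i<j\leq\lfloor nt\rfloor}\phi\bigl(S_j^{(p)}-S_i^{(p)}\bigr).$$

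The first reduction replaces $S_k^{(p)}$ by $\sigma_p\mathbb{W}(k)$ in the last sum. Since $\phi$ is $1$-Lipschitz and supported in $[-1,1]$, only pairs $(i,j)$ with $|\sigma_p(\mathbb{W}(j)-\mathbb{W}(i))|$ within $\mathcal{O}(\log n)$ of $[-1,1]$ contribute to the substitution error, and the number of such pairs is controlled by an occupation-time estimate. The second reduction compares the resulting discrete sum with the double integral $\tfrac{1}{2}\iint_{[0,\lfloor nt\rfloor]^2}\phi(\sigma_p(\mathbb{W}(s)-\mathbb{W}(u)))\,ds\,du$ via the Lévy modulus of continuity of $\mathbb{W}$ combined with the spatial modulus of continuity of the Brownian local time. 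Two applications of the occupation time formula then give
$$\iint_{[0,\tau]^2}\phi(\sigma_p(\mathbb{W}(s)-\mathbb{W}(u)))\,ds\,du=\iint_{\mathbb{R}^2}\phi(\sigma_p(x-y))\,l(x,\tau)l(y,\tau)\,dx\,dy,$$
and, because $\sigma_p\phi(\sigma_p\,\cdot)$ is an approximate identity of scale $1/\sigma_p$, the right-hand side equals $\sigma_p^{-1}\int_\mathbb{R}l(x,\tau)^2\,dx$ up to an error again governed by the spatial modulus of continuity of $l(\cdot,\tau)$. Taking $\tau=\lfloor nt\rfloor$ and using the Brownian scaling $l(y,\lfloor nt\rfloor)=\sqrt{n}\,l_n(y/\sqrt{n},t)$, so that $\int_\mathbb{R}l(y,\tau)^2\,dy=n^{3/2}\int_\mathbb{R}l_n(x,t)^2\,dx$, delivers the announced main term $\tfrac{1}{2}n^{3/2}\int_\mathbb{R}l_n(x,t)^2\,dx$, the factor $\sigma_p$ being absorbed into the diffusion coefficient of the Wiener process implicitly used to define $l_n$ in the statement.

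The hard part is the quantitative, uniform-in-$t$ control of the three approximation errors above. The announced rate arises by balancing the KMT logarithmic error with Révész's modulus of continuity for the Brownian local time,
$$\sup_{|x-y|\leq h}\,|l(x,\tau)-l(y,\tau)|=\mathcal{O}\!\bigl(\sqrt{h\tau\log(1/h)\log\log\tau}\bigr)\quad\text{a.s.},$$
which is the dominant source of error: inserting $\tau=n$ and $h$ of order $(\log n)/\sqrt{n}$, the natural spatial scale after Brownian rescaling, produces the bound $\mathcal{O}(n^{5/4}(\log n)^{1/2}(\log\log n)^{1/4})$. Since the summands in (\ref{sj}) share with the $p=1$ case all the properties that drive the argument of \cite{Bouzebdaetal2015} (boundedness, zero mean, finite nondegenerate variance, strongly non-lattice character), that proof transfers with only the constant $\sigma_p$ needing to be recomputed, as indicated in the excerpt.
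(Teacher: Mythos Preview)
Your proposal is aligned with the paper's own treatment: the paper does not give an independent proof of this theorem but simply states that it follows ``exactly the same lines of \cite{Bouzebdaetal2015}'', and your closing paragraph reaches the identical conclusion---the summands of $S_n^{(p)}$ are bounded, centered, of nondegenerate variance and strongly non-lattice, so the argument for $p=1$ in \cite{Bouzebdaetal2015} carries over verbatim with only the variance constant $\sigma_p$ changing. Your sketch of the mechanism (KMT for partial sums, rewriting $L_n^{(p)}$ via the tent function $\phi$, occupation-time formula, and the spatial modulus of continuity of Brownian local time) is a faithful outline of what that referenced proof contains; note that the precise rate $n^{5/4}(\log n)^{1/2}(\log\log n)^{1/4}$ is driven by the Bass--Khoshnevisan strong approximation of random-walk local time by Brownian local time (cf.\ \cite{BassKhoshnevisan1992,BassKhoshnevisan1993}), which is the sharper tool actually used in \cite{Bouzebdaetal2015}, rather than a direct appeal to R\'ev\'esz's modulus.
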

%
\begin{corollary}
We have, with probability $1$, for any $t\in(0,1]$, there exist two positive
constants $\kappa_1$ and $\kappa_2$ such that, almost surely, for large enough $n$,
$$
\kappa_1\,\frac{\lfloor nt\rfloor^{3/2}}{\sqrt{\log\log n}}\leq L_n^{(p)}(t)
\leq \kappa_2\,\lfloor nt\rfloor^{3/2}\sqrt{\log\log n}.
$$
In particular, for any $t\in(0,1]$, almost surely, as $n \to\infty$,
$$
L_n^{(p)}(t)=\frac{1}{2}\,\lfloor nt\rfloor^{3/2+o(1)}.
$$
\end{corollary}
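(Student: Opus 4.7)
The plan is to combine the Brownian local time approximation of the preceding theorem with a two-sided law of the iterated logarithm for the self-intersection local time of standard Brownian motion. First, the theorem immediately above gives, almost surely and uniformly in $t\in[0,1]$,
$$
L_n^{(p)}(t)=\frac{1}{2}\,n^{3/2}\int_{\mathbb{R}} l_n(x,t)^2\,dx + R_n,\qquad R_n=\mathcal{O}\bigl(n^{5/4}(\log n)^{1/2}(\log\log n)^{1/4}\bigr).
$$
The change of variable $y=\sqrt{n}\,x$, together with the definition $l_n(x,t)=\tfrac{1}{\sqrt n}\,l(\sqrt n\,x,\lfloor nt\rfloor)$, simplifies the leading term to
$$
n^{3/2}\int_{\mathbb{R}} l_n(x,t)^2\,dx=\int_{\mathbb{R}} l(y,\lfloor nt\rfloor)^2\,dy=:\alpha\bigl(\lfloor nt\rfloor\bigr),
$$
so that the task is reduced to controlling the sample paths of the Brownian self-intersection local time $\alpha(T)$.

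Next, I would invoke a Chung--Cs\'aki-type law of the iterated logarithm for $\alpha(T)$: there exist positive constants $c_1,c_2$ such that, almost surely,
$$
\liminf_{T\to\infty}\frac{\alpha(T)\sqrt{\log\log T}}{T^{3/2}}\geq c_1,\qquad \limsup_{T\to\infty}\frac{\alpha(T)}{T^{3/2}\sqrt{\log\log T}}\leq c_2.
$$
This is exactly the kind of LIL exploited for a companion quantity in \cite{Bouzebdaetal2015}; its ingredients are the Brownian scaling identity $\alpha(T)\stackrel{\mathcal{L}}{=}T^{3/2}\alpha(1)$, a sharp exponential moment bound for $\alpha(1)$ (for the $\limsup$), and a decoupling of the Brownian path into independent blocks along a geometric subsequence followed by a Borel--Cantelli argument (for the $\liminf$), very much in the spirit of \cite{BassKhoshnevisan1993}.

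Third, I specialize to $T=\lfloor nt\rfloor$ for fixed $t\in(0,1]$; since $\log\log\lfloor nt\rfloor\sim\log\log n$, the LIL translates into two-sided bounds for $\tfrac{1}{2}\alpha(\lfloor nt\rfloor)$ with constants $c_1/2$ and $c_2/2$. It remains to verify that the error $R_n$ is negligible, which follows from
$$
n^{5/4}(\log n)^{1/2}(\log\log n)^{1/4}=o\!\left(\frac{n^{3/2}}{\sqrt{\log\log n}}\right)\!,
$$
the algebraic factor $n^{-1/4}$ dominating all polylogarithmic corrections. Picking $\kappa_1$ slightly smaller than $c_1/2$ and $\kappa_2$ slightly larger than $c_2/2$ then produces the stated inequality.

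Finally, the ``in particular'' assertion is routine: taking logarithms in the two-sided bound yields $\log L_n^{(p)}(t)=\tfrac{3}{2}\log\lfloor nt\rfloor+\mathcal{O}(\log\log\log n)$, whence $L_n^{(p)}(t)=\lfloor nt\rfloor^{3/2+o(1)}$, with the explicit prefactor $\tfrac{1}{2}$ being a $\lfloor nt\rfloor^{o(1)}$ correction. The main obstacle I anticipate is Step 2, namely the precise justification of the two-sided LIL for $\alpha(T)$; once this input is granted, Steps 1, 3 and 4 are essentially scaling and bookkeeping.
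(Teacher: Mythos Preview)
Your proposal is correct and follows essentially the same route as the paper, which does not spell out a proof but simply states that the theorem and its corollary are obtained ``following exactly the same lines of \cite{Bouzebdaetal2015}''. Your outline---reducing via the approximation theorem and the change of variables to the Brownian self-intersection local time $\alpha(T)=\int_{\mathbb{R}} l(y,T)^2\,dy$, invoking the two-sided LIL for this quantity (as in \cite{Bouzebdaetal2015}), and checking that the remainder $R_n=o\bigl(n^{3/2}/\sqrt{\log\log n}\bigr)$---is precisely what that reference carries out.
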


\section{Simulation results for testing the uniformity }\label{simulation}
In this section, series of experiments are conducted in order to examine the performance
of the proposed statistical tests. More precisely, we have undertaken numerical
illustrations regarding the power of these statistical tests in finite sample situations.
The computing program codes are implemented in \texttt{R}. In this section, we considered four uniformity tests for different values of $p=0,1,2,3$,
by making use of the \emph{$p$-fold integrated
Kolmogorov-Smirnov statistic}
$$
\mathbf{S}_n^{(p)}:=\sup_{t\in\mathbb{R}}
\left|\sqrt{n}\left(\mathbb{F}_n^{(p)}(t)-F_0^{(p)}(t)\right)\right|.
$$
Recall that for $p=0$, the statistic $\mathbf{S}_n^{(p)}$ is  the classical Kolmogorov Smirnov statistic.
The simulations involve
random samples of size $n = 10,20,40,100$; they are drawn from the underlying distributions below,
for each $p \in\{0,1, 2, 3\}$ and based on 10000 replications.
In power comparison, we considered the following distribution functions $F(\cdot)$ as alternatives.
\begin{align*}
A_{k} : F(x)&=1-(1-x)^{k} \quad\mbox{for }0\leq x\leq  1,\quad k=1.5, 2;\\
B_{k} : F(x)&=\begin{cases}
    2^{k-1}x^{k}  & \mbox{for }\;  0\leq x<0.5, \\
   1-2^{k-1}(1-x)   & \mbox{for }\; 0.5\leq x\leq  1,\end{cases}\quad k=1.5,2,3;\\
C_{k} : F(x)&=\begin{cases}
    0.5-2^{k-1}(0.5-x)^{k}  &\mbox{for }\;   0\leq x<0.5 ,\\
   0.5+2^{k-1}(0.5-x)^{k}   & \mbox{for }\;  0.5\leq x\leq  1,\end{cases}\quad k=1.5,2,3.
\end{align*}
These alternatives were used by \cite{Stephens1974} in his study of power comparisons of several
tests for uniformity. According to Stephens, alternative $A$ gives points closer to zero than
expected under the hypothesis of uniformity. Alternative $B$ gives points near 0.5 and alternative
$C$ gives two points close to 0 and 1. Also, these alternatives were used by \cite{Dudewicz1981}
and \cite{Alizadeh2017} in their study of power comparisons of some uniformity tests.
The densities of the alternatives $A_{k}$,   $B_{k}$, and $C_{k}$ are depicted in the
Figure~\ref{alternative} below.

\begin{figure}[!ht]
\begin{center}
\centerline{\includegraphics[width=9cm]{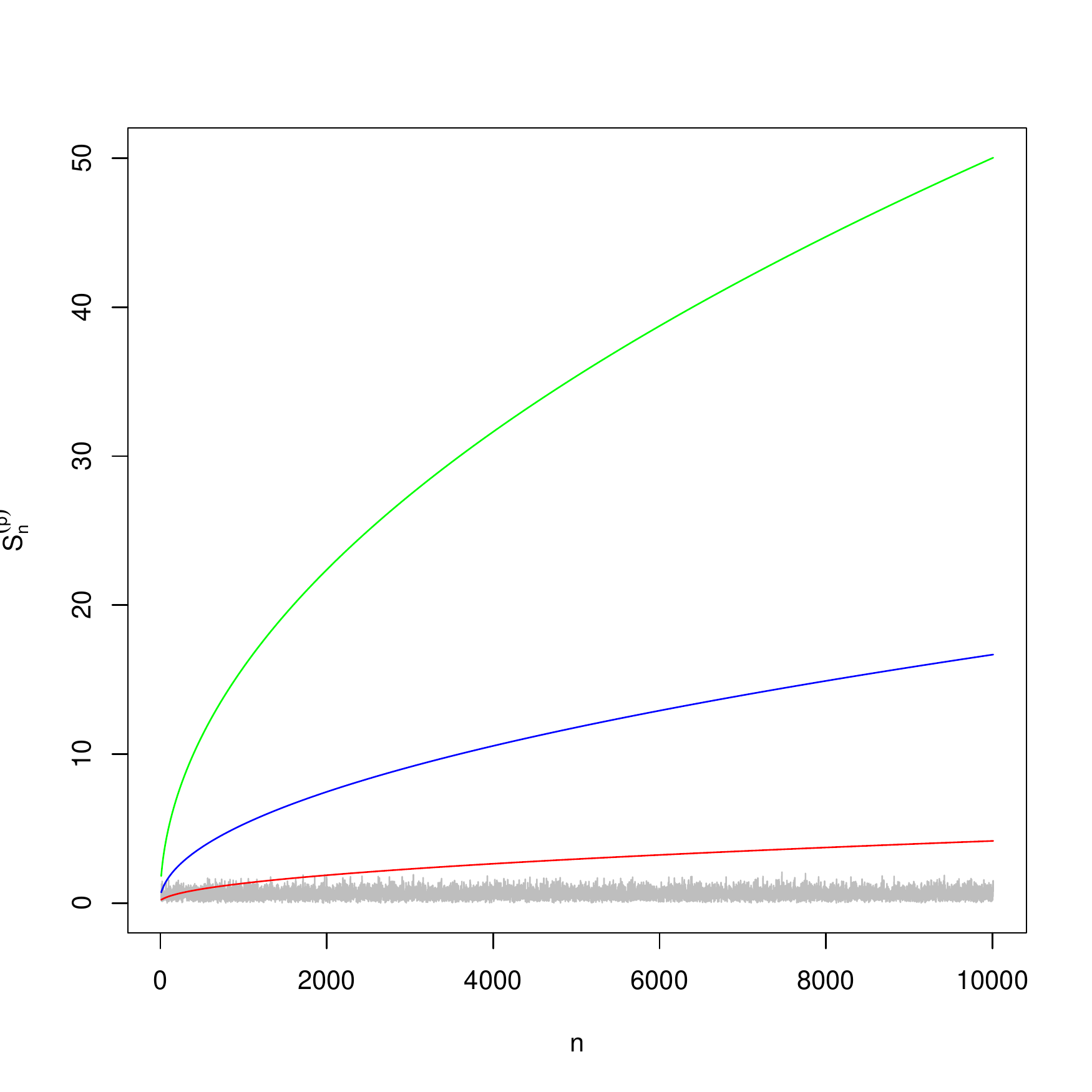}}
\end{center}
\caption{The statistics $\mathbf{ S}_{n}^{(p)}$ $p=0,1,2,3$ as functions of the sample size.
(``{\textcolor{gray}{--}}'': $\mathbf{ S}_{n}^{(0)}$ ), (``{\textcolor{green}{--}}'':  $\mathbf{ S}_{n}^{(1)}$),
(``{\textcolor{blue}{--}}'':  $\mathbf{ S}_{n}^{(2)}$), (``{\textcolor{red}{--}}'':  $\mathbf{ S}_{n}^{(3)}$).}
\label{normallocation2}
\end{figure}

\begin{figure}[!ht]
\begin{center}
\centerline{\includegraphics[width=17cm]{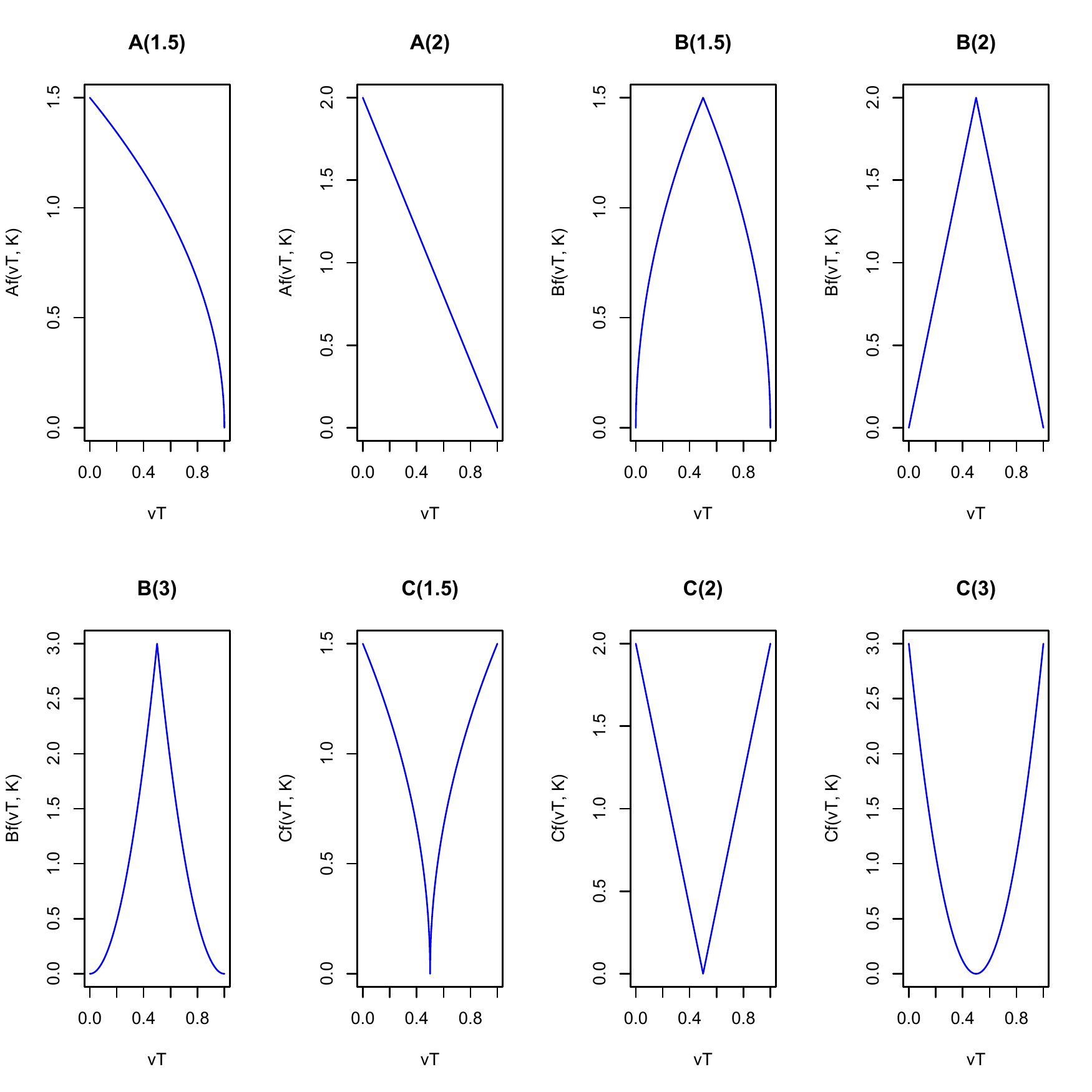}}
\end{center}
\caption{Densities of $A_{k}$, $B_{k}$, and $C_{k}$ families} \label{normallocation2zer}
\label{alternative}\end{figure}

\FloatBarrier
\section*{\underline{$n=10$}}

\begin{enumerate}
  \item
\begin{table}[!ht]
\caption{Powers (in percentages) of goodness of fit tests for the Uniform distribution on (0,1) for $n =10$
and $\alpha=0.01$}
\begin{center}

\begin{tabular}{lclclcl}
  \hline
  $n$ & Samples & Level \\
     \hline
  10 & 10000 & 0.01 \\
   \hline
\end{tabular}
~~\\[1ex]

\begin{tabular}{rlclclclclclcl}
  \hline
 & Alternative & $\mathbf{S}_n^{(0)}$ & $\mathbf{S}_n^{(1)}$ & $\mathbf{S}_n^{(2)}$ & $\mathbf{S}_n^{(3)}$ \\
  \hline
  & $A_{1.5}$ &  4 & 7 & 7 & 0  \\
  & $A_{2}$ &  15 & 24 & 21 & 0 \\
  & $B_{1.5}$ & 0 & 1 & 1 & 0  \\
  & $B_{2}$ & 0 & 3 & 3 & 0 \\
  & $B_{3}$ &  0 & 7 & 9 & 0\\
  & $C_{1.5}$ & 3 & 1 & 0 & 3 \\
  & $C_{2}$ &  6 & 1 & 1 & 8 \\
  & $C_{3}$ & 15 & 4 & 5 & 19  \\

   \hline
\end{tabular}
\end{center}
\end{table}

  \item
\begin{table}[!ht]
\caption{Powers (in percentages) of goodness of fit tests for the Uniform distribution on (0,1) for $n =10$
and $\alpha=0.05$}
\begin{center}

\begin{tabular}{lclclcl}
  \hline
  $n$ & Samples & Level \\
     \hline
  10 & 10000 & 0.05 \\
   \hline
\end{tabular}
~~\\[1ex]

\begin{tabular}{rlclclclclclcl}
  \hline
 & Alternative & $\mathbf{S}_n^{(0)}$ & $\mathbf{S}_n^{(1)}$ & $\mathbf{S}_n^{(2)}$ & $\mathbf{S}_n^{(3)}$ \\
  \hline
  & $A_{1.5}$ &   15 & 25 & 22 & 0  \\
  & $A_{2}$ &  38 & 54 & 48 & 0 \\
  & $B_{1.5}$ & 3 & 9 & 11 & 0 \\
  & $B_{2}$ & 4 & 17 & 20 & 0 \\
  & $B_{3}$ &  8 & 36 & 44 & 0 \\
  & $C_{1.5}$ & 11 & 5 & 5 & 13\\
  & $C_{2}$ &  19 & 8 & 9 & 22\\
  & $C_{3}$ & 36 & 18 & 21 & 38 \\
   \hline
\end{tabular}
\end{center}\label{Tab2}
\end{table}

  \item
\begin{table}[!ht]
\caption{Powers (in percentages) of goodness of fit tests for the Uniform distribution on (0,1) for $n =10$
and $\alpha=0.10$}
\begin{center}

\begin{tabular}{lclclcl}
  \hline
  $n$ & Samples & Level \\
     \hline
  10 & 10000 & 0.10 \\
   \hline
\end{tabular}
~~\\[1ex]

\begin{tabular}{rlclclclclclcl}
  \hline
 & Alternative & $\mathbf{S}_n^{(0)}$ & $\mathbf{S}_n^{(1)}$ & $\mathbf{S}_n^{(2)}$ & $\mathbf{S}_n^{(3)}$ \\
  \hline
  & $A_{1.5}$ &  25 & 37 & 32 & 0  \\
  & $A_{2}$ &   51 & 68 & 61 & 0\\
  & $B_{1.5}$ &  8 & 18 & 19 & 3\\
  & $B_{2}$ & 12 & 31 & 32 & 1\\
  & $B_{3}$ &  24 & 58 & 61 & 0 \\
  & $C_{1.5}$ & 19 & 11 & 11 & 23\\
  & $C_{2}$ &  30 & 16 & 16 & 32 \\
  & $C_{3}$ & 53 & 29 & 32 & 50\\
   \hline
\end{tabular}

\end{center}
\end{table}

\end{enumerate}

\FloatBarrier

\section*{\underline{$n=20$}}

\begin{enumerate}
  \item
\begin{table}[!ht]
\caption{Powers (in percentages) of goodness of fit tests for the Uniform distribution on (0,1) for $n =20$
and $\alpha=0.01$}
\begin{center}

\begin{tabular}{lclclcl}
  \hline
  $n$ & Samples & Level \\
     \hline
  20 & 10000 & 0.01 \\
   \hline
\end{tabular}
~~\\[1ex]

\begin{tabular}{rlclclclclclcl}
  \hline
 & Alternative & $\mathbf{S}_n^{(0)}$ & $\mathbf{S}_n^{(1)}$ & $\mathbf{S}_n^{(2)}$ & $\mathbf{S}_n^{(3)}$ \\
  \hline
  & $A_{1.5}$ &  9 & 15 & 14 & 0  \\
  & $A_{2}$ &  39 & 52 & 49 & 0 \\
  & $B_{1.5}$ &  0 & 3 & 4 & 0   \\
  & $B_{2}$ &  1 & 8 & 13 & 0 \\
  & $B_{3}$ &  6 & 33 & 44 & 0 \\
  & $C_{1.5}$ &  4 & 1 & 1 & 6  \\
  & $C_{2}$ &  10 & 3 & 3 & 15  \\
  & $C_{3}$ & 31 & 14 & 16 & 40  \\

   \hline
\end{tabular}
\end{center}
\end{table}

  \item \begin{table}[!ht]
\caption{Powers (in percentages) of goodness of fit tests for the Uniform distribution on (0,1) for $n =20$
and $\alpha=0.05$}
\begin{center}

\begin{tabular}{lclclcl}
  \hline
  $n$ & Samples & Level \\
     \hline
  20 & 10000 & 0.05 \\
   \hline
\end{tabular}
~~\\[1ex]

\begin{tabular}{rlclclclclclcl}
  \hline
 & Alternative & $\mathbf{S}_n^{(0)}$ & $\mathbf{S}_n^{(1)}$ & $\mathbf{S}_n^{(2)}$ & $\mathbf{S}_n^{(3)}$ \\
  \hline
  & $A_{1.5}$ &   28 & 42 & 37 & 0   \\
  & $A_{2}$ &   70 & 83 & 77 & 0  \\
  & $B_{1.5}$ & 6 & 15 & 16 & 0  \\
  & $B_{2}$ &  13 & 34 & 38 & 0  \\
  & $B_{3}$ & 42 & 74 & 78 & 0  \\
  & $C_{1.5}$ & 16 & 8 & 7 & 18 \\
  & $C_{2}$ &   31 & 17 & 17 & 36 \\
  & $C_{3}$ & 67 & 42 & 42 & 63 \\
   \hline
\end{tabular}
\end{center}
\end{table}

  \item
\begin{table}[!ht]
\caption{Powers (in percentages) of goodness of fit tests for the Uniform distribution on (0,1) for $n =20$
and $\alpha=0.10$}
\begin{center}

\begin{tabular}{lclclcl}
  \hline
  $n$ & Samples & Level \\
     \hline
  20 & 10000 & 0.10 \\
   \hline
\end{tabular}
~~\\[1ex]

\begin{tabular}{rlclclclclclcl}
  \hline
 & Alternative & $\mathbf{S}_n^{(0)}$ & $\mathbf{S}_n^{(1)}$ & $\mathbf{S}_n^{(2)}$ & $\mathbf{S}_n^{(3)}$ \\
  \hline
  & $A_{1.5}$ &   41 & 54 & 50 & 0   \\
  & $A_{2}$   &   81 & 89 & 86 & 0 \\
  & $B_{1.5}$ &   12 & 24 & 27 & 1\\
  & $B_{2}$   &   27 & 48 & 53 & 0 \\
  & $B_{3}$   &   67 & 86 & 89 & 0 \\
  & $C_{1.5}$ &   25 & 14 & 14 & 30\\
  & $C_{2}$   &   45 & 27 & 29 & 50 \\
  & $C_{3}$   &   82 & 55 & 56 & 75\\
   \hline
\end{tabular}
\end{center}
\end{table}

\end{enumerate}

\FloatBarrier
\section*{\underline{$n=40$}}

\begin{enumerate}
  \item
  \begin{table}[!ht]
\caption{Powers (in percentages) of goodness of fit tests for the Uniform distribution on (0,1) for $n =40$
and $\alpha=0.01$}
\begin{center}

\begin{tabular}{lclclcl}
  \hline
  $n$ & Samples & Level \\
     \hline
  40 & 10000 & 0.01 \\
   \hline
\end{tabular}
~~\\[1ex]

\begin{tabular}{rlclclclclclcl}
  \hline
 & Alternative & $\mathbf{S}_n^{(0)}$ & $\mathbf{S}_n^{(1)}$ & $\mathbf{S}_n^{(2)}$ & $\mathbf{S}_n^{(3)}$ \\
  \hline
  & $A_{1.5}$ &  25 & 37 & 35 & 0    \\
  & $A_{2}$   &  82 & 90 & 87 & 10  \\
  & $B_{1.5}$ &   1 & 7 & 10 & 0 \\
  & $B_{2}$   &   7 & 29 & 37 & 0 \\
  & $B_{3}$   &  49 & 83 & 89 & 6 \\
  & $C_{1.5}$ &   7 & 3 & 3 & 8 \\
  & $C_{2}$   &  24 & 13 & 13 & 27 \\
  & $C_{3}$   &  70 & 46 & 47 & 68\\
   \hline
\end{tabular}
\end{center}
\end{table}

  \item
  \begin{table}[!ht]
\caption{Powers (in percentages) of goodness of fit tests for the Uniform distribution on (0,1) for $n =40$
and $\alpha=0.05$}
\begin{center}

\begin{tabular}{lclclcl}
  \hline
  $n$ & Samples & Level \\
     \hline
  40 & 10000 & 0.05 \\
   \hline
\end{tabular}
~~\\[1ex]

\begin{tabular}{rlclclclclclcl}
  \hline
 & Alternative & $\mathbf{S}_n^{(0)}$ & $\mathbf{S}_n^{(1)}$ & $\mathbf{S}_n^{(2)}$ & $\mathbf{S}_n^{(3)}$ \\
  \hline
  & $A_{1.5}$ &  51 & 65 & 63 & 3    \\
  & $A_{2}$   &  95 & 98 & 97 & 28  \\
  & $B_{1.5}$ &  11 & 24 & 30 & 0\\
  & $B_{2}$   &  38 & 62 & 70 & 3\\
  & $B_{3}$   &  92 & 97 & 98 & 26  \\
  & $C_{1.5}$ &  24 & 13 & 14 & 27\\
  & $C_{2}$   &  56 & 36 & 37 & 55   \\
  & $C_{3}$   &  96 & 77 & 75 & 87\\
   \hline
\end{tabular}
\end{center}
\end{table}

  \item
  \begin{table}[!ht]
\caption{Powers (in percentages) of goodness of fit tests for the Uniform distribution on (0,1) for $n =40$
and $\alpha=0.10$}
\begin{center}

\begin{tabular}{lclclcl}
  \hline
  $n$ & Samples & Level \\
     \hline
  40 & 10000 & 0.10 \\
   \hline
\end{tabular}
~~\\[1ex]

\begin{tabular}{rlclclclclclcl}
  \hline
 & Alternative & $\mathbf{S}_n^{(0)}$ & $\mathbf{S}_n^{(1)}$ & $\mathbf{S}_n^{(2)}$ & $\mathbf{S}_n^{(3)}$ \\
  \hline
  & $A_{1.5}$ &   65 & 77 & 74 & 5  \\
  & $A_{2}$   &   98 & 99 & 98 & 40 \\
  & $B_{1.5}$ &   23 & 37 & 42 & 1 \\
  & $B_{2}$   &   60 & 75 & 80 & 6\\
  & $B_{3}$   &   98 & 99 & 99 & 43\\
  & $C_{1.5}$ &   35 & 21 & 21 & 40 \\
  & $C_{2}$   &   74 & 50 & 49 & 69\\
  & $C_{3}$   &   99 & 89 & 84 & 92\\
   \hline
\end{tabular}
\end{center}
\end{table}

\end{enumerate}

\FloatBarrier

\section*{\underline{$n=100$}}

\begin{enumerate}
  \item
  \begin{table}[!ht]
  \caption{Powers (in percentages) of goodness of fit tests for the Uniform distribution on (0,1) for $n =100$
and $\alpha=0.01$}
\begin{center}

\begin{tabular}{lclclcl}
  \hline
  $n$ & Samples & Level \\
     \hline
  100 & 10000 & 0.01 \\
   \hline
\end{tabular}
~~\\[1ex]

\begin{tabular}{rlclclclclclcl}
  \hline
 & Alternative & $\mathbf{S}_n^{(0)}$ & $\mathbf{S}_n^{(1)}$ & $\mathbf{S}_n^{(2)}$ & $\mathbf{S}_n^{(3)}$ \\
  \hline
 & $A_{1.5}$ &   70 & 83 & 81 & 35 \\
  & $A_{2}$   &  99 & 100 & 100 & 97  \\
  & $B_{1.5}$ &   7 & 25 & 33 & 6 \\
  & $B_{2}$   &  56 & 83 & 89 & 52 \\
  & $B_{3}$   &  99 & 100 & 100 & 99 \\
  & $C_{1.5}$ &  17 & 13 & 12 & 24\\
  & $C_{2}$   &  71 & 52 & 50 & 68 \\
  & $C_{3}$   &  99 & 97 & 95 & 98\\
   \hline
\end{tabular}
\end{center}
\end{table}

  \item
  \begin{table}[!ht]
  \caption{ Powers (in percentages) of goodness of fit tests for the Uniform distribution on (0,1) for $n =100$
and $\alpha=0.05$}
\begin{center}

\begin{tabular}{lclclcl}
  \hline
  $n$ & Samples & Level \\
     \hline
  100 & 10000 & 0.05 \\
   \hline
\end{tabular}
~~\\[1ex]

\begin{tabular}{rlclclclclclcl}
  \hline
 & Alternative & $\mathbf{S}_n^{(0)}$ & $\mathbf{S}_n^{(1)}$ & $\mathbf{S}_n^{(2)}$ & $\mathbf{S}_n^{(3)}$ \\
  \hline
  & $A_{1.5}$ &  91 & 96 & 94 & 58  \\
  & $A_{2}$   & 100 & 100 & 100 & 99  \\
  & $B_{1.5}$ &  34 & 54 & 61 & 17\\
  & $B_{2}$   &  94 & 97 & 98 & 76\\
  & $B_{3}$   & 100 & 100 & 100 & 99 \\
  & $C_{1.5}$ &  48 & 34 & 31 & 48\\
  & $C_{2}$   &  96 & 81 & 75 & 86 \\
  & $C_{3}$   & 100 & 99 & 99 & 99  \\
   \hline
\end{tabular}
\end{center}
\end{table}

  \item

\begin{table}[!ht]
  \caption{ Powers (in percentages) of goodness of fit tests for the Uniform distribution on (0,1) for $n =100$
and $\alpha=0.10$}
\begin{center}

\begin{tabular}{lclclcl}
  \hline
  $n$ & Samples & Level \\
     \hline
  100 & 10000 & 0.10 \\
   \hline
\end{tabular}
~~\\[1ex]

\begin{tabular}{rlclclclclclcl}
  \hline
 & Alternative & $\mathbf{S}_n^{(0)}$ & $\mathbf{S}_n^{(1)}$ & $\mathbf{S}_n^{(2)}$ & $\mathbf{S}_n^{(3)}$ \\
  \hline
  & $A_{1.5}$ & 95 & 98 & 97 & 70    \\
  & $A_{2}$   &100 & 100 & 100 & 99    \\
  & $B_{1.5}$ & 53 & 66 & 72 & 27 \\
  & $B_{2}$   & 98 & 98 & 99 & 86  \\
  & $B_{3}$   &100 & 100 & 100 & 100   \\
  & $C_{1.5}$ & 66 & 46 & 44 & 62\\
  & $C_{2}$   & 99 & 90 & 85 & 93  \\
  & $C_{3}$   &100 & 100 & 99 & 99  \\
   \hline
\end{tabular}
\end{center}
\end{table}

\end{enumerate}

We computed the power values of the proposed tests using Monte Carlo simulation and
next these values are compared with each other.
As it is expected, the power values of tests strongly depend on the type of alternatives.
Based on our simulation study, against alternative $A_{k}$, the proposed tests $\mathbf{S}_n^{(1)}$
and $\mathbf{S}_n^{(2)}$ have a good power while against alternative $B_{k}$ the tests have the
highest power compared with the classical Kolmogorov-Smirnov. The test based on $\mathbf{S}_n^{(3)}$
has a good power for alternative $C_{k}$. In order to extract methodological recommendations
for the use of the proposed statistics in this work, it would be interesting
to conduct extensive Monte Carlo experiments  to compare our procedures with other
alternatives presented in the literature, but this would go
far beyond the scope of the present paper.

\section{Mathematical developments}\label{appe}

This section is devoted to the proofs of our results. The previously
displayed notations continue to be used in the sequel.

\subsection{Some bounds for the empirical process, Brownian bridge and the
Kiefer process}

Let us immediately point out an obvious fact which will be used several times
thereafter:
$$
0\leq F(t)\leq 1\quad\text{and}\quad 0\leq\mathbb{F}_n(t)\leq 1
\quad\text{for}\quad t\in\mathbb{R},\,n\in\mathbb{N}^*
$$
which obviously entails that, for any $p\in\mathbb{N}$,
\begin{equation}\label{inequality}
0\leq F^{(p)}(t)\leq 1\quad\text{and}\quad 0\leq\mathbb{F}_n^{(p)}(t)\leq 1
\quad\text{for}\quad t\in\mathbb{R},\,n\in\mathbb{N}^*.
\end{equation}
Similarly, we have, for any $p\in\mathbb{N}$,
\begin{equation}\label{inequality-U}
0\leq\mathbb{U}_n^{(p)}(u)\leq 1 \quad\text{for}\quad u\in[0,1],\,n\in\mathbb{N}^*.
\end{equation}

We also mention some bounds that we will use further. By appealing to
Chung's law of the iterated logarithm for the empirical process, see
\cite{Chung1949}, which stipulates that
$$
\limsup_{n\to\infty} \frac{\sup_{t\in\mathbb{R}} |\alpha_n(t)|}
{\sqrt{\log \log n}}=\frac{1}{\sqrt{2}} \quad\text{a.s.},
$$
we see that, with probability~$1$, as $n\to\infty$,
\begin{equation}\label{empililaeae}
\sup_{t\in\mathbb{R}} |\alpha_n(t)|=\mathcal{O}\!\left(\!\sqrt{\log \log n}\,\right)\!.
\end{equation}
Moreover, by \cite{KMT1975}, on a suitable probability space, we can define
the uniform empirical process $\{\beta_n:n\in\mathbb{N}^*\}$, in combination with a
sequence of Brownian bridges
$\left\{\mathbb{B}_n:n\in\mathbb{N}^*\right\}$ together with a Kiefer process
$\{\mathbb{K}(s,u): s\geq 0, u\in[0,1]\}$, such that, with probability~$1$,
as $n\to\infty$,
\begin{equation}\label{approxbis}
\sup_{u\in[0,1]} \left|\beta_n(u)- \mathbb{B}_n(u)\right|
=\mathcal{O}\!\left(\frac{\log n}{\sqrt{n}}\right)
\end{equation}
and
$$
\max_{1\leq k\leq n}\sup_{u\in[0,1]}\left|\sqrt{k}\,\beta_k(u)
-\mathbb{K}(k,u)\right|=\mathcal{O}\!\left((\log n)^2\right)
$$
from which we extract,with probability~$1$, as $n\to\infty$,
\begin{equation}\label{equation11KIE}
\sup_{u\in[0,1]}\left|\beta_n(u)-\frac{1}{\sqrt{n}}\,\mathbb{K}(n,u)\right|
=\mathcal{O}\!\left(\frac{(\log n)^2}{\sqrt{n}}\right)\!.
\end{equation}
As a result, by putting (\ref{empililaeae}) into (\ref{approxbis}) and
(\ref{equation11KIE}), one derives the following bounds: with probability~$1$,
as $n\to\infty$,
\begin{equation}\label{estimbridge}
\sup_{u\in[0,1]} |\mathbb{B}_n(u)| =\mathcal{O}\!\left(\!\sqrt{\log \log n}\,\right)
\quad\text{and}\quad
\sup_{u\in[0,1]}|\mathbb{K}(n,u)| =\mathcal{O}\!\left(\!\sqrt{n \log \log n}\,\right)\!.
\end{equation}
Notice that the second bound in (\ref{estimbridge}) comes also from the law of
the iterated logarithm for the Kiefer process; see \cite{Csorgo1981REVESZ}, p.~81.

\subsection{Proof of Proposition~\ref{expFp}}

We begin by making an observation: because of the hypothesis that the df $F$ is continuous,
the sampled variables $X_1, X_2,\ldots,X_n$ are almost surely all different.
Then, we can define with probability $1$ the order statistics
$$
X_{1,n} < X_{2,n} < \cdots< X_{n,n}
$$
associated with $X_1, X_2,\ldots,X_n$. Notice that
the event $\big\{X_{i,n} \leq t\big\}$ is equal to $\{n\mathbb{F}_n(t) \leq i\}$.
Hence, we can write that, for any function $f$, with probability $1$,
\begin{equation}\label{general}
\int_{-\infty}^{t} f(s)\,d\mathbb{F}_n(s)
=\frac{1}{n} \sum_{i=1}^n f(X_i)\mathbbm{1}_{\{X_i\leq t\}}
=\frac{1}{n} \sum_{i=1}^{n\mathbb{F}_n(t)} f\!\left(X_{i,n}\right).
\end{equation}

Before proving (\ref{repFnp1}), we first show by induction that
\begin{equation}\label{rec}
\mathbb{F}_n^{(p)}(t)=\frac{1}{n^{p+1}} \# \left\{(i_1,\dots,i_{p+1})
\in\mathbb{N}^{p+1}: 1\leq i_1\leq \dots \leq i_{p+1}\leq n\mathbb{F}_{n}(t)\right\}\!.
\end{equation}
Of course, (\ref{rec}) holds for $p=0$. Pick now a positive integer $p$ and
suppose that
$$
\mathbb{F}_n^{(p-1)}(t)=\frac{1}{n^p}
\# \left\{(i_1,\dots,i_p)\in\mathbb{N}^p: 1\leq i_1\leq \dots \leq i_p
\leq n\mathbb{F}_{n}(t)\right\}\!.
$$
By Definition~\ref{def}, we see that the family of functions $\mathbb{F}_n^{(p)}$
can be recursively defined by $\mathbb{F}_n^{(0)}=\mathbb{F}_n$ and, for any
$p\in\mathbb{N}^*$ and any $t\in\mathbb{R}$, by
$$
\mathbb{F}_n^{(p)}(t)=\int_{-\infty}^{t}\mathbb{F}_n^{(p-1)}(s)\,d\mathbb{F}_n(s).
$$
Therefore, by (\ref{general}) and remarking that $\mathbb{F}_{n}(X_{i,n})=i/n$, a.s.,
\begin{align*}
\mathbb{F}_n^{(p)}(t)&=\frac{1}{n} \sum_{i=1}^n \mathbb{F}_n^{(p-1)}(X_i)
\mathbbm{1}_{\{X_i\leq t\}}
=\frac{1}{n} \sum_{i=1}^{n\mathbb{F}_n(t)} \mathbb{F}_n^{(p-1)}(X_{i,n})
\\
&=\frac{1}{n^{p+1}} \sum_{i=1}^{n\mathbb{F}_n(t)}
\# \left\{(i_1,\dots,i_p)\in\mathbb{N}^p: 1\leq i_1\leq \dots
\leq i_p \leq n\mathbb{F}_{n}(X_{i,n})\right\}
\\
&=\frac{1}{n^{p+1}} \sum_{i=1}^{n\mathbb{F}_n(t)}
\# \left\{(i_1,\dots,i_p)\in\mathbb{N}^p: 1\leq i_1\leq \dots \leq i_p \leq i\right\}
\\
&=\frac{1}{n^{p+1}} \# \left\{(i_1,\dots,i_p,i_{p+1})\in\mathbb{N}^{p+1}:
1\leq i_1\leq \dots \leq i_p \leq i_{p+1}\leq n\mathbb{F}_{n}(t)\right\}\!.
\end{align*}
Hence, (\ref{rec}) is valid for any $p\in\mathbb{N}$.

Now, we observe that the cardinality in (\ref{rec}) is nothing but the number
of combinations with repetitions of $p+1$ integers lying between $1$ and
$n\mathbb{F}_{n}(t)$, which coincides with the number of combinations without
repetition of $p+1$ integers lying between $1$ and $n\mathbb{F}_{n}(t)+p$.
This is the result concerning $\mathbb{F}_n^{(p)}$ announced in (\ref{repFnp1}).
Finally, the formula concerning $F^{(p)}$ can be easily obtained by induction too.
The proof of Proposition~\ref{expFp} is finished.
\hfill$\Box$

In the proposition below, we provide a representation of $\mathbb{F}_n^{(p)}$
by means of $\mathbb{F}_n$.
\begin{proposition}\label{repFnp3}
The integrated empirical d.f.\/ $\mathbb{F}_n^{(p)}$ can be expressed by means of\/
$\mathbb{F}_n$ as follows: with probability~$1$,
\begin{equation}\label{repFnp2}
\mathbb{F}_n^{(p)}(t)=\frac{\mathbb{F}_n(t)^{p+1}}{(p+1)!}
+\sum_{k=1}^p a_k^{(p)}\,\frac{\mathbb{F}_n(t)^k}{n^{p-k+1}}
\quad\text{for}\quad t\in\mathbb{R},\,n\in\mathbb{N}^*,
\end{equation}
where the coefficients $a_k^{(p)}$, $1\leq k\leq p$, are positive integers.
\end{proposition}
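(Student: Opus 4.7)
My starting point is the explicit formula of Proposition~\ref{expFp}, which gives, with probability~$1$,
$\mathbb{F}_n^{(p)}(t) = \frac{1}{n^{p+1}}\binom{M+p}{p+1}$,
where $M:=n\mathbb{F}_n(t)$ is a non-negative integer (here I use the continuity of $F$). Writing
$\binom{M+p}{p+1}=\frac{M(M+1)(M+2)\cdots(M+p)}{(p+1)!}$,
the task reduces to expanding the rising-factorial product $M(M+1)\cdots(M+p)$ as a polynomial in $M$ and tracking the powers of $n$ produced by the substitution $M^k = n^k\mathbb{F}_n(t)^k$.

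The main algebraic ingredient I would use is the classical identity
$M(M+1)\cdots(M+p) = \sum_{k=1}^{p+1} c_k^{(p)}\, M^k$,
where the coefficients $c_k^{(p)}$ are the unsigned Stirling numbers of the first kind, counting permutations of $\{1,\ldots,p+1\}$ with exactly $k$ cycles. I would establish this identity by a short induction on~$p$: the factorization $M(M+1)\cdots(M+p)=(M+p)\cdot M(M+1)\cdots(M+p-1)$ yields the Pascal-type recursion $c_k^{(p)}=p\,c_k^{(p-1)}+c_{k-1}^{(p-1)}$, and together with the base value $c_1^{(0)}=1$ this shows immediately that every $c_k^{(p)}$ is a positive integer, with boundary values $c_{p+1}^{(p)}=1$ and $c_1^{(p)}=p!$. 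An alternative way to make the positivity manifest is to recognize $c_k^{(p)}$ as the elementary symmetric polynomial $e_{p+1-k}(1,2,\ldots,p)$, a sum of products of positive integers.

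Substituting this expansion and using the elementary rewrite $M^k/n^{p+1}=\mathbb{F}_n(t)^k/n^{p-k+1}$, I would obtain the claimed decomposition
$\mathbb{F}_n^{(p)}(t)=\frac{\mathbb{F}_n(t)^{p+1}}{(p+1)!}+\sum_{k=1}^{p}a_k^{(p)}\,\frac{\mathbb{F}_n(t)^k}{n^{p-k+1}}$,
with the coefficients $a_k^{(p)}$ read off directly from the Stirling expansion; the leading term separates cleanly because $c_{p+1}^{(p)}=1$. The positive-integer character asserted in the proposition is inherited from that of the $c_k^{(p)}$. The main conceptual step is the Stirling-number identity; everything else is routine factorial bookkeeping and exponent accounting, and the only subtlety is to ensure that every index $k\in\{1,\dots,p\}$ genuinely contributes (which is guaranteed by the boundary identities above).
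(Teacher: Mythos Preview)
Your approach is essentially the same as the paper's: both start from the explicit binomial formula of Proposition~\ref{expFp}, expand the rising factorial $M(M+1)\cdots(M+p)$ as a polynomial in $M=n\mathbb{F}_n(t)$ with positive integer coefficients, and then collect powers of $n$. The paper does this in one line by factoring out $M$ and noting that $\prod_{i=1}^p(x+i)$ has coefficients in~$\mathbb{N}$; you add the (correct and useful) identification of these coefficients with unsigned Stirling numbers of the first kind and the recursion that proves their positivity.

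There is, however, a genuine slip in your final sentence. You write that ``the positive-integer character asserted in the proposition is inherited from that of the $c_k^{(p)}$,'' but your own expansion gives $a_k^{(p)}=c_k^{(p)}/(p+1)!$, and division by $(p+1)!$ does not preserve integrality. Indeed, for $p=1$ one computes directly
\[
\mathbb{F}_n^{(1)}(t)=\frac{M(M+1)}{2n^2}=\frac{\mathbb{F}_n(t)^2}{2}+\frac{1}{2}\cdot\frac{\mathbb{F}_n(t)}{n},
\]
so $a_1^{(1)}=1/2$, which is not an integer. The paper's short proof has exactly the same gap: it shows the coefficients of $\prod_{i=1}^p(x+i)$ lie in~$\mathbb{N}$ but then silently divides by $(p+1)!$. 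In fact the statement of the proposition appears to be slightly misworded---the $a_k^{(p)}$ are positive \emph{rationals}, not positive integers. Fortunately this is harmless for everything downstream: the only properties ever used later (e.g.\ in the proofs of Theorems~\ref{lem2b} and~\ref{kieferapproximation}) are that the $a_k^{(p)}$ are positive and that $A_p=\sum_{k=1}^p a_k^{(p)}$ is a finite positive constant, both of which your argument does establish.
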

\begin{proof}
By expanding the combination in (\ref{repFnp1}), we get that, a.s.,
$$
\mathbb{F}_n^{(p)}(t)=\frac{1}{(p+1)!\,n^{p+1}} \, \prod_{i=0}^p (n\mathbb{F}_n(t)+i)
=\frac{\mathbb{F}_n(t)}{(p+1)!\,n^p} \, \prod_{i=1}^p (n\mathbb{F}_n(t)+i)
\quad\text{for}\quad t\in\mathbb{R},\,n\in\mathbb{N}^*.
$$
Since  $\prod_{i=1}^p (x+i)$ is a polynomial of degree $p$ with coefficients in $\mathbb{N}$,
(\ref{repFnp2}) immediately follows.
%
\end{proof}

In the proposition below, we rely $\alpha_n^{(p)}$ to $\alpha_n$.
\begin{proposition}
The $p$-fold integrated empirical process $\alpha_n^{(p)}$ is related to
the empirical process $\alpha_n$ according to, with probability~$1$,
\begin{equation}\label{alphanp}
\alpha_n^{(p)}(t)=\frac{1}{p!}\,F(t)^p\,\alpha_n(t)
+\sum_{k=2}^{p+1} b_k^{(p)}\,\frac{F(t)^{p+1-k}}{n^{(k-1)/2}}\,\alpha_n(t)^k
+\sum_{k=1}^p a_k^{(p)}\,\frac{\mathbb{F}_n(t)^k}{n^{p-k+1/2}}
\quad\text{for}\quad t\in\mathbb{R}, \,n\in\mathbb{N}^*,
\end{equation}
where the coefficients $a_k^{(p)}$, $1\leq k\leq p$, are those of
Proposition~\ref{repFnp3} and the $b_k^{(p)}$, $2\leq k\leq p+1$, are positive real
numbers less than $1$.
Similarly,
\begin{equation}\label{betanp}
\beta_n^{(p)}(u)=\frac{1}{p!}\,u^p\,\beta_n(u)
+\sum_{k=2}^{p+1} b_k^{(p)}\,\frac{u^{p+1-k}}{n^{(k-1)/2}}\,\beta_n(u)^k
+\sum_{k=1}^p a_k^{(p)}\,\frac{\mathbb{U}_n(u)^k}{n^{p-k+1/2}}
\quad\text{for}\quad u\in[0,1], \,n\in\mathbb{N}^*.
\end{equation}
\end{proposition}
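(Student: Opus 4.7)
The proof is essentially a direct binomial expansion starting from the representation established in Proposition~\ref{repFnp3}. The plan is to substitute $\mathbb{F}_n(t) = F(t) + \alpha_n(t)/\sqrt{n}$ into the formula for $\mathbb{F}_n^{(p)}(t)$, subtract $F^{(p)}(t) = F(t)^{p+1}/(p+1)!$, multiply by $\sqrt{n}$, and identify the coefficients.

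More precisely, by Propositions~\ref{expFp} and~\ref{repFnp3}, with probability~$1$,
\begin{equation*}
\alpha_n^{(p)}(t) = \sqrt{n}\left[\frac{\mathbb{F}_n(t)^{p+1}-F(t)^{p+1}}{(p+1)!}\right] + \sqrt{n}\sum_{k=1}^p a_k^{(p)}\frac{\mathbb{F}_n(t)^k}{n^{p-k+1}}.
\end{equation*}
The second sum immediately becomes $\sum_{k=1}^p a_k^{(p)} \mathbb{F}_n(t)^k / n^{p-k+1/2}$, matching the third sum in~(\ref{alphanp}). So the only work is to treat the first bracket.

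I would apply the binomial theorem to $\mathbb{F}_n(t)^{p+1} = \bigl(F(t) + \alpha_n(t)/\sqrt{n}\bigr)^{p+1}$, obtaining
\begin{equation*}
\mathbb{F}_n(t)^{p+1} - F(t)^{p+1} = \sum_{k=1}^{p+1}\binom{p+1}{k}F(t)^{p+1-k}\frac{\alpha_n(t)^k}{n^{k/2}}.
\end{equation*}
Multiplying by $\sqrt{n}/(p+1)!$ gives a sum over $k=1,\ldots,p+1$ of terms proportional to $F(t)^{p+1-k}\alpha_n(t)^k/n^{(k-1)/2}$. The $k=1$ term contributes coefficient $(p+1)/(p+1)! = 1/p!$, yielding the leading term $\frac{1}{p!}F(t)^p\alpha_n(t)$. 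For $k\in\{2,\ldots,p+1\}$, the coefficient is
\begin{equation*}
b_k^{(p)} := \frac{1}{k!\,(p+1-k)!},
\end{equation*}
which is clearly a positive real number. Since $k\geq 2$ and $p+1-k\geq 0$, we have $k!\,(p+1-k)!\geq 2$, so $b_k^{(p)}\leq 1/2 < 1$, confirming the claim on the size of the coefficients.

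This establishes~(\ref{alphanp}). Formula~(\ref{betanp}) follows by the exact same argument applied to the i.i.d.\ uniform sample $U_1,\ldots,U_n$: one uses $\mathbb{U}_n(u) = u + \beta_n(u)/\sqrt{n}$ in place of $\mathbb{F}_n(t) = F(t) + \alpha_n(t)/\sqrt{n}$, and notes that the proof of Proposition~\ref{repFnp3} applies verbatim to the uniform case, yielding $\mathbb{U}_n^{(p)}(u) = \mathbb{U}_n(u)^{p+1}/(p+1)! + \sum_{k=1}^p a_k^{(p)}\mathbb{U}_n(u)^k/n^{p-k+1}$ with the same integer coefficients $a_k^{(p)}$, together with the identity $u^{p+1}/(p+1)!$ for the theoretical counterpart. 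No genuine obstacle arises; the entire argument is a bookkeeping exercise with the binomial theorem, and the only thing to check is the numerical bound $b_k^{(p)}<1$, which is immediate.
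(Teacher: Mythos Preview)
Your proof is correct and follows essentially the same approach as the paper: both start from the representation of Proposition~\ref{repFnp3}, expand $\mathbb{F}_n(t)^{p+1}=(F(t)+\alpha_n(t)/\sqrt{n})^{p+1}$ via the binomial theorem, and identify $b_k^{(p)}=\binom{p+1}{k}/(p+1)!=1/(k!\,(p+1-k)!)$. Your explicit check that $b_k^{(p)}\le 1/2<1$ is a nice touch the paper leaves implicit.
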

\begin{proof}
By Definition~\ref{def} and Formulae (\ref{repFnp1}) and (\ref{repFnp2}), we
write that, a.s., for $t\in\mathbb{R},\,n\in\mathbb{N}^*$,
$$
\alpha_n^{(p)}(t)=\frac{1}{(p+1)!}\,\sqrt{n}\left(\mathbb{F}_n(t)^{p+1}-F(t)^{p+1}\right)
+\sum_{k=1}^p a_k^{(p)}\,\frac{\mathbb{F}_n(t)^k}{n^{p-k+1/2}}.
$$
Applying the elementary identity below obtained by writing $a=(a-b)+b$ and
using the binomial theorem
$$
a^{p+1}-b^{p+1}
=(p+1)b^p(a-b)+\sum_{k=2}^{p+1} {p+1\choose k} b^{p+1-k} (a-b)^k
$$
to $a=\mathbb{F}_n(t)$ and $b=F(t)$ yields, a.s., for $t\in\mathbb{R},
\,n\in\mathbb{N}^*$,
\begin{align}
\alpha_n^{(p)}(t)
=&\;
\frac{1}{p!}\,\sqrt{n}\,F(t)^p\left(\mathbb{F}_n(t)-F(t)\right)
\nonumber\\
&
+\frac{1}{(p+1)!}\,\sqrt{n} \,\sum_{k=2}^{p+1} {p+1\choose k} F(t)^{p+1-k}
(\mathbb{F}_n(t)-F(t))^k
\nonumber\\
&+\sum_{k=1}^p a_k^{(p)}\,\frac{\mathbb{F}_n(t)^k}{n^{p-k+1/2}}.
\label{H-N}
\end{align}
Substituting $\mathbb{F}_n(t)-F(t)=\alpha_n^{(p)}(t)/\sqrt{n}$ into (\ref{H-N})
gives (\ref{alphanp}) by setting $b_k^{(p)}:={p+1\choose k}/(p+1)!$.
\end{proof}

\subsection{Proof of Theorem~\ref{lem2b}}

Set $A_p:=\sum_{k=1}^p a_k^{(p)}>0$ and $I=[0,d/n]$ or $[1-d/n,1]$.
Making use of (\ref{betanp}) together with (\ref{inequality-U}) and the fact
that $0\leq b_k^{(p)}\leq 1$, it is clear that, a.s.,
for any $d,n\in\mathbb{N}^*$ such that $d\leq n$,
$$
\sup_{u\in I}\left|\beta_n^{(p)}(u)-\mathbb{B}_n^{(p)}(u)\right|
\leq \sup_{u\in I}\big|\beta_n(u)-\mathbb{B}_n(u)\big|
+\frac{1}{\sqrt{n}}\sum_{k=2}^{p+1} \frac{1}{n^{k/2-1}} \sup_{u\in[0,1]}|\beta_n(u)|^k
+\frac{A_p}{\sqrt{n}}.
$$
Therefore,
\begin{align*}
\lqn{\mathbb{P}\bigg\{\sup_{u\in I}\left|\beta_n^{(p)}(u)-\mathbb{B}_n^{(p)}(u)\right|
\geq \frac{1}{\sqrt{n}}\,(c_1\log d+x)\bigg\}}
&
\leq \mathbb{P}\bigg\{ \sup_{u\in I}|\beta_n(u)-\mathbb{B}_n(u)|
+\frac{1}{\sqrt{n}}\,\sum_{k=2}^{p+1} \frac{1}{n^{k/2-1}}\,\sup_{u\in[0,1]}|\beta_n(u)|^k
\geq \frac{1}{\sqrt{n}}\,(c_1\log d+x-A_p)\bigg\}\!.
\end{align*}
Now, using the elementary inequality
$$
\mathbb{P}\!\left\{\sum_{k=1}^r \xi_k\geq \sum_{k=1}^r a_k\right\}
\leq \sum_{k=1}^r\mathbb{P}\{\xi_k \geq a_k\},
$$
which is valid for any positive
integer $r$, any r.v.'s $\xi_1,\dots,\xi_r$ and any real numbers $a_1,\dots,a_r$,
we obtain
\begin{align}
\lqn{\mathbb{P}\bigg\{\sup_{u\in I}\left|\beta_n^{(p)}(u)-\mathbb{B}_n^{(p)}(u)\right|
\geq \frac{1}{\sqrt{n}}\,(c_1\log d+x)\bigg\}}
\leq &\;
\mathbb{P}\bigg\{ \sup_{u\in I}|\beta_n(u)
-\mathbb{B}_n(u)|\geq \frac{1}{\sqrt{n}}\,(c_1\log d+\frac{x}{p+1}-A_p)\bigg\}
\nonumber\\
&+\sum_{k=2}^{p+1} \mathbb{P}\bigg\{\sup_{u\in[0,1]}|\beta_n(u)|^k\geq
\frac{xn^{k/2-1}}{p+1}\bigg\}\!.
\label{appp1ref}
\end{align}
On the other hand, the inequality of \cite{Dvoretzky1956} stipulates that there
exists a positive constant $c_4$ such that, for any $x>0$ and any
$n\in\mathbb{N}^*$,
\begin{equation}\label{equaDvoretzky1956}
\mathbb{P}\left\{\sup_{t\in\mathbb{R}}\left|\mathbb{F}_n(t)-F(t)\right|
\geq\frac{x}{\sqrt{n}}\,\right\}\leq c_4\,\exp({-2x^2}).
\end{equation}
Actually (\ref{equaDvoretzky1956}) simply reads,
by means of $\beta_n$, for any $x>0$ and any $n\in\mathbb{N}^*$, as
$$
\mathbb{P}\bigg\{\sup_{u\in[0,1]} |\beta_n(u)| \geq x\bigg\} \leq c_4\,\exp({-2x^2}).
$$
Then,
\begin{align}
\sum_{k=2}^{p+1} \mathbb{P}\bigg\{\sup_{u\in[0,1]}|\beta_n(u)|^k\geq \frac{xn^{k/2-1}}{p+1}\bigg\}
&
\leq c_4 \sum_{k=2}^{p+1} \exp\!\left(-2\left(\frac{x{\,n^{k/2-1}}}{p+1}
\right)^{\!\!2/k}\right)
\nonumber\\
&
\leq c_4 \!\left(\exp({-[2/(p+1)]\,x})+
\sum_{k=3}^{p+1} \exp\!\left(-\frac{2}{p+1}\,x^{2/k}n^{1-2/k}\right)\!\right)\!.
\label{appp1}
\end{align}
Now, by putting (\ref{mason-inequality}) and (\ref{appp1}) into (\ref{appp1ref}), we
immediately complete the proof of Theorem~\ref{lem2b} with
$$
B_p:=c_2\,\exp({c_3A_p})+c_4\quad\text{and}\quad C_p:=\min(c_3,2)/(p+1).
$$
\hfill$\Box$

\subsection{Proof of Corollary~\ref{corol22}}

The functional $\Phi$ being Lipschitz, there exists a positive constant $L$
such that, for any functions $v,w$,
\begin{equation}\label{lip}
|\Phi(v)-\Phi(w)|\leq L \sup_{t\in\mathbb{R}} |v(t)-w(t)|.
\end{equation}
Let us choose for $v,w$ the processes $$V_n:=\alpha_n^{(p)}(\cdot) \quad\mbox{and}\quad
W_n:=\mathbb{B}_n^{(p)}(F(\cdot)).$$
Applying the elementary inequality
$$
|\mathbb{P}(A)-\mathbb{P}(B)|
\leq \mathbb{P}(A\backslash B) + \mathbb{P}(B\backslash A)
=\mathbb{P}\big((A\backslash B)\cup (B\backslash A)\big)
$$
to the events $A=\{\Phi(V_n)\leq x\}$ and $B=\{\Phi(W_n)\leq x\}$ provides,
for any $x\in\mathbb{R}$ and any $n\in\mathbb{N}^*$,
$$
\big|\mathbb{P} \{\Phi(V_n)\leq x\}-\mathbb{P} \{\Phi(W_n)\leq x\}\big|
\leq \mathbb{P} \{\Phi(V_n)\leq x\leq \Phi(W_n) \quad\mbox{or}\quad\Phi(W_n)\leq x\leq \Phi(V_n)\}.
$$
Now, applying to the elementary fact that [$a\le x\le b$ or $b\le x\le a$
implies $|b-x|\le |b-a|$] to the numbers $a=\Phi(V_n)$ and $b=\Phi(W_n)$
\begin{align*}
\mathbb{P} \{\Phi(V_n)\leq x\leq \Phi(W_n)\;\mbox{or}\;\Phi(W_n)\leq x\leq \Phi(V_n)\}&
\leq\mathbb{P} \!\left\{|\Phi(W_n)-x|\leq |\Phi(W_n)-\Phi(V_n)|\right\}\!,
\end{align*}
from which, due to (\ref{lip}), we deduce that
\begin{equation}\label{estimPhiVW}
\big|\mathbb{P} \{\Phi(V_n)\leq x\}-\mathbb{P} \{\Phi(W_n)\leq x\}\big|
\leq \mathbb{P} \!\left\{|\Phi(W_n)-x| \leq L \sup_{t\in\mathbb{R}} |V_n(t)-W_n(t)|\right\}\!.
\end{equation}
On the other hand, by choosing $x=c \log n$ for a large enough constant $c$ in
(\ref{estimation-a}) and putting
$\epsilon_n:=(c+c_1)$ $\log n/\sqrt{n}$,
we obtain the estimate below valid for large enough $n$:
\begin{equation}\label{estimationbis}
\mathbb{P}\!\left\{ \sup_{t\in\mathbb{R}} \left|V_n(t)-W_n(t)\right|
\geq \epsilon_n\right\}\leq \frac{B_p}{n}
=o\!\left(\frac{\log n}{\sqrt{n}}\right)\!.
\end{equation}
Now, by (\ref{estimPhiVW}), we write
\begin{align}
\lqn{
\big|\mathbb{P} \{\Phi(V_n)\leq x\}-\mathbb{P} \{\Phi(W_n)\leq x\}\big|}
\leq&\;
\mathbb{P} \!\left\{ \sup_{t\in\mathbb{R}}
\left|V_n(t)-W_n(t)\right|<\epsilon_n, |\Phi(W_n)-x| \leq
L \sup_{t\in\mathbb{R}} |V_n(t)-W_n(t)|\right\}
\nonumber\\
&
+\mathbb{P}\!\left\{ \sup_{t\in\mathbb{R}} \left|V_n(t)-W_n(t)\right|
\geq \epsilon_n,|\Phi(W_n)-x| \leq
L \sup_{t\in\mathbb{R}} |V_n(t)-W_n(t)|\right\}
\nonumber\\
\leq&\;
\mathbb{P} \!\left\{|\Phi(W_n)-x| \leq L \epsilon_n\right\}
+\mathbb{P}\!\left\{ \sup_{t\in\mathbb{R}} \left|V_n(t)-W_n(t)\right|
\geq \epsilon_n\right\}\!.
\label{estimationter}
\end{align}
Noticing that the distribution of $\mathbb{B}_n$ does not depend on $n$,
which entails the equality
$$
\mathbb{P} \!\left\{|\Phi(W_n)-x| \leq L \epsilon_n\right\}
=\mathbb{P} \!\left\{|\Phi(W)-x| \leq L \epsilon_n\right\}
$$
where $$W:=F(\cdot)^p\,\mathbb{B}(F(\cdot))/p!,$$
and recalling the assumption that the r.v.
$\Phi(W)$ admits a density function bounded by $M$ say,
we get that, for any $x\in\mathbb{R}$ and any $n\in\mathbb{N}^*$,
\begin{equation}\label{estimationquater}
\mathbb{P} \!\left\{|\Phi(W_n)-x| \leq L \epsilon_n\right\}\leq 2LM\epsilon_n.
\end{equation}
Finally, putting (\ref{estimationbis}) and  (\ref{estimationquater})
into (\ref{estimationter}) leads to (\ref{estimationfunctional}),
which completes the proof of Corollary~\ref{corol22}.
An alternative proof of a similar result may be found in \cite{Shorack1986} pp. 502--503.
\hfill$\Box$

\subsection{Proof of Corollary~\ref{corollaryapprox}}\label{section-cor-approx}

Set, for any $n\in\mathbb{N}^*$,
$$
\pi_n:=\mathbb{P}\!\left\{\sup_{t\in\mathbb{R}} \left|\alpha_n^{(p)}(t)
-\mathbb{B}_n^{(p)}(F(t))\right| \geq c_6\,\frac{\log n}{\sqrt{n}}\right\}\!.
$$
Applying (\ref{estimation-a}) to $x=c'\log n$ for a sufficiently large constant $c'$
yields, for large enough $n$,
\begin{align*}
\pi_n&\leq B_p \sum_{k=2}^{p+1}\exp\!\left(-C_pc'^{2/k}\,(\log n)^{2/k}n^{1-3/k}\right)
\\&=\mathcal{O}\!\left(\frac{1}{n^2}\right),
\end{align*}
where $c_6$ is a positive constant.
Hence the series {   $(\sum_{n\geq 1} \pi_n)$} is convergent and by appealing to
Borel-Cantelli lemma, we get that
$$
\mathbb{P}\!\left(\limsup_{n\in\mathbb{N}^*}\left\{ \sup_{t\in\mathbb{R}}
\left|\alpha_n^{(p)}(t)-\mathbb{B}_n^{(p)}(F(t))\right|
\geq c_6\,\frac{\log n}{\sqrt{n}}\right\}\right)=0,
$$
which clearly implies Corollary~\ref{corollaryapprox}.
\hfill$\Box$

\subsection{Proof of Theorem~\ref{kieferapproximation}}

In view of (\ref{alphanp}), we write that, a.s., for any $s\in(0,1)$, any
$t\in\mathbb{R}$ and any $k\in\mathbb{N}^*$,
\begin{align}
\sqrt{k}\,\alpha_k^{(p)}(t)-\mathbb{K}^{(p)}(k,F(t))
=&\;
\frac{1}{p!}\,F(t)^p\left[\sqrt{k}\,\alpha_k(t)-\mathbb{K}(k,F(t))\right]
\nonumber\\
&
+\sum_{i=2}^{p+1} b_i^{(p)}\,\frac{F(t)^{p+1-i}}{k^{(i-1)/2}}\,\alpha_k(t)^i
+\sum_{i=1}^p a_i^{(p)}\,\frac{\mathbb{F}_k(t)^i}{k^{p-i+1/2}}.
\end{align}
Then, by using (\ref{inequality}) together with the fact that $0\leq b_i^{(p)}\leq 1$,
we deduce that, a.s., for any $n\in\mathbb{N}^*$,
\begin{align}
\lqn{
\max_{1\leq k\leq n}\sup_{t\in\mathbb{R}}\left|\sqrt{k}\,\alpha_k^{(p)}(t)
-\mathbb{K}^{(p)}(k,F(t))\right|}
&
\leq \frac{1}{p!}\,\max_{1\leq k\leq n}\sup_{t\in\mathbb{R}}\left|\sqrt{k}\,\alpha_k(t)
-\mathbb{K}(k,F(t))\right|+\sum_{i=2}^{p+1} \max_{1\leq k\leq n}\!
\left(\frac{1}{k^{(i-1)/2}}\sup_{t\in\mathbb{R}}|\alpha_k(t)|^i\right)
\nonumber\\
&+\sum_{i=1}^p \max_{1\leq k\leq n} \frac{a_i^{(p)}}{k^{p-i+1/2}}
\nonumber\\
&
\leq \max_{1\leq k\leq n}\sup_{t\in\mathbb{R}}\left|\sqrt{k}\,\alpha_k(t)
-\mathbb{K}(k,F(t))\right|+\sum_{i=2}^{p+1} \left(\max_{1\leq k\leq n}
\sup_{t\in\mathbb{R}}|\alpha_k(t)|\right)^{\!i}+A_p.
\label{referthe25}
\end{align}
Finally, by putting (\ref{equation11KIE}) and (\ref{empililaeae}) into
(\ref{referthe25}), we completes the proof of Theorem~\ref{kieferapproximation}.
\hfill$\Box$

\subsection{Proof of Corollary~\ref{corollarylli}}

From Theorem~\ref{kieferapproximation}, we deduce that, with probability $1$, as $n\to\infty$,
$$
\sup_{t\in\mathbb{R}} \big|\alpha_n^{(p)}(t)\big|
=\frac{1}{\sqrt{n}}\,\sup_{t\in\mathbb{R}} \big|\mathbb{K}^{(p)}(n,F(t))\big|
+\mathcal{O}\!\left(\frac{(\log n)^2}{\sqrt{n}}\right)\!.
$$
Therefore, a.s.,
\begin{align}
\limsup_{n\to\infty}\frac{\sup_{t\in\mathbb{R}}
\big|\alpha_n^{(p)}(t)\big|}{\sqrt{\log \log n}}
&
=\limsup_{n\to\infty}\frac{\sup_{t\in\mathbb{R}}
\big|\mathbb{K}^{(p)}(n,F(t))\big|}{\sqrt{n\log \log n}}
\nonumber\\
&
=\limsup_{n\to\infty}
\frac{\sup_{u\in[0,1]} |u^p\,\mathbb{K}(n,u)|}{p!\,\sqrt{n\log \log n}}
\nonumber\\
&
=\frac{\sqrt{2}}{p!}\sup_{u\in[0,1]} \sqrt{\mathrm{Var}(u^p\,\mathbb{K}(1,u))}.
\label{lil}
\end{align}
In the last equality, we have used Strassen's law of the iterated logarithm
for Gaussian processes; see for instance \cite{Oodaira1973},
or Corollary 1.15.1 of \cite{Csorgo1981REVESZ}.
Observing that $\mathrm{Var}(u^p\,\mathbb{K}(1,u))=u^{2p+1}(1-u)$ and that
$$
\sup_{u\in[0,1]} u^{2p+1}(1-u)=\frac{(2p+1)^{2p+1}}{(2p+2)^{2p+2}},
$$
(\ref{lil}) readily implies (\ref{lli}) which proves Corollary~\ref{corollarylli}.
\hfill$\Box$

\subsection{Proof of Corollary~\ref{ecooooo}}

We work under Hypothesis $\mathcal{H}_0$. Let us introduce the $p$-fold
integrated empirical process related to the d.f. $F_0$:
$$
\alpha_{0,n}^{(p)}(t):=\sqrt{n}\left(\mathbb{F}_n^{(p)}(t)-F_0^{(p)}(t)\right)
\quad\text{for}\quad t\in\mathbb{R},\,n\in\mathbb{N}^*.
$$
By the triangular inequality, we plainly have
$$
\left|\mathbf{S}_n^{(p)}-\sup_{t\in\mathbb{R}}
\big|\mathbb{B}_n^{(p)}(F_0(t))\big|\right|
\leq
\sup_{t\in\mathbb{R}}\left|\alpha_{0,n}^{(p)}(t)-\mathbb{B}_n^{(p)}(F_0(t))\right|
$$
from which together with (\ref{approx}) we deduce (\ref{kol}).

Similarly,
\begin{align}
\left|\mathbf{T}_n^{(p)}-\int_{\mathbb{R}}
\big[\mathbb{B}_n^{(p)}(F_0(t))\big]^2\,dF_0(t)\right|
\leq&\;
\int_{\mathbb{R}}\left|\alpha_{0,n}^{(p)}(t)^2
-\big[\mathbb{B}_n^{(p)}(F_0(t))\big]^2\right| dF_0(t)
\nonumber\\
\leq&\;
\sup_{t\in\mathbb{R}}\left|\alpha_{0,n}^{(p)}(t)-\mathbb{B}_n^{(p)}(F_0(t))\right|
\nonumber\\
&
\times \left(\sup_{t\in\mathbb{R}}\left|\alpha_{0,n}^{(p)}(t)\right|
+\sup_{t\in\mathbb{R}}\big|\mathbb{B}_n^{(p)}(F_0(t))\big|\right)\!.
\label{vonmisesbis}
\end{align}
In the last inequality above appears the supremum
$$
\sup_{t\in\mathbb{R}}\big|\mathbb{B}_n^{(p)}(F_0(t))\big|
\leq\sup_{u\in[0,1]}\big|\mathbb{B}_n(u)\big|.
$$
Then, by putting (\ref{approx}), (\ref{lli}) and (\ref{estimbridge}) into
(\ref{vonmisesbis}), we immediately deduce (\ref{vonmises}). The proof of
Corollary~\ref{ecooooo} is finished.
\hfill$\Box$

\subsection{Proof of Theorem~\ref{theweithed1}}

We imitate the proof of Theorem 2.1 in \cite{Mason1999}.
Let us introduce
\begin{align*}
\Delta_{n,\nu}^{(p)}(d)&:=\sup_{u\in[d/n,1-d/n]}\frac{\big|\beta_n^{(p)}(u)
-\mathbb{B}_n^{(p)}(u)\big|}{\big(u(1-u)\big)^{\nu}},
\\
\Delta_{n,\nu}^{1,(p)}(d)&:=\sup_{u\in[d/n,1]}\frac{\big|\beta_n^{(p)}(u)
-\mathbb{B}_n^{(p)}(u)\big|}{u^{\nu}},
\\
\Delta_{n,\nu}^{2,(p)}(d)&:=\sup_{u\in[0,1-d/n]}\frac{\big|\beta_n^{(p)}(u)
-\mathbb{B}_n^{(p)}(u)\big|}{(1-u)^{\nu}}.
\end{align*}
We clearly have
\begin{align*}
\Delta_{n,\nu}^{1,(p)}(d)
&
= \max\!\left(\sup_{u\in[d/n,1/2]}\frac{\big|\beta_n^{(p)}(u)
-\mathbb{B}_n^{(p)}(u)\big|}{\big(u(1-u)\big)^{\nu}},
\sup_{u\in[1/2,1-d/n]}\frac{\big|\beta_n^{(p)}(u)
-\mathbb{B}_n^{(p)}(u)\big|}{\big(u(1-u)\big)^{\nu}}\right)
\\
&
\leq 2^{\nu} \max\!\left( \sup_{u\in[d/n,1/2]}\frac{\big|\beta_n^{(p)}(u)
-\mathbb{B}_n^{(p)}(u)\big|}{u^{\nu}},
\sup_{u\in[1/2,1-d/n]}\frac{\big|\beta_n^{(p)}(u)
-\mathbb{B}_n^{(p)}(u)\big|}{(1-u)^{\nu}}\right)
\end{align*}
which entails that
\begin{equation}\label{ineg-Delta-max}
\Delta_{n,\nu}^{(p)}(d)\leq 2^{\nu}  \max\!\left(\Delta_{n,\nu}^{1,(p)}(d),
\Delta_{n,\nu}^{2,(p)}(d)\right)\!.
\end{equation}
Hence, it suffices to derive inequalities of the form~(\ref{ineg-Delta1})
for the auxiliary quantities $\Delta_{n,\nu}^{(p)1}(d)$ and $\Delta_{n,\nu}^{(p)2}(d)$.
First, notice that if we have for each $i\in\{d,\dots,n-1\}$,
$$
\sup_{u\in[0,(i+1)/n]} \big|\beta_n^{(p)}(u)-\mathbb{B}_n^{(p)}(u)\big|<x(i/n)^{\nu},
$$
then
\begin{align*}
\Delta_{n,\nu}^{1,(p)}(d)
&
=\max_{i=d,\dots,n-1} \!\left(\sup_{u\in[i/n,(i+1)/n]}
\frac{\big|\beta_n^{(p)}(u)-\mathbb{B}_n^{(p)}(u)\big|}{u^{\nu}}\right)
\\
&
\leq\max_{i=d,\dots,n-1} \!\bigg(\!\!\left(\frac{n}{i}\right)^{\nu}\sup_{u\in[0,(i+1)/n]}
\big|\beta_n^{(p)}(u)-\mathbb{B}_n^{(p)}(u)\big|\bigg)<x.
\end{align*}
Hence, we derive the inequality
\begin{equation}\label{ineg-Delta-delta1}
\mathbb{P}\big\{\Delta_{n,\nu}^{1,(p)}(d)\geq x\big\}
\leq\sum_{i=d}^{n-1} \delta_{i,n},
\end{equation}
where
$$
\delta_{i,n}:=\mathbb{P}\bigg\{\sup_{u\in[0,(i+1)/n]}
\big|\beta_n^{(p)}(u)-\mathbb{B}_n^{(p)}(u)\big|\geq x(i/n)^{\nu}\bigg\}.
$$
Quite similarly,
\begin{equation}\label{ineg-Delta-delta2}
\mathbb{P}\big\{\Delta_{n,\nu}^{2,(p)}(d)\geq x\big\}
\leq\sum_{i=0}^{n-d-1} \delta_{i,n}.
\end{equation}
Set now (under the assumption $\nu>0$)
$$
a_{\nu}:=\max\!\left(\!1,c_1\sup_{i\in\mathbb{N}^*}\frac{\log (i+1)}{i^{\nu}}\right),
$$
where the constant $c_1$ is that of (\ref{mason-inequality}).
Of course, $a_{\nu}\geq 1$ and, for any $i\in\mathbb{N}^*$,
$$a_{\nu}\,i^{\nu}\geq c_1\log (i+1).$$ Then, by writing
$$x=(x-a_{\nu}n^{\nu-1/2})+a_{\nu}n^{\nu-1/2}$$ in $\delta_{i,n}$ above
and appealing to (\ref{estimation-b}), we have, for $i\geq d$, any
$n\in\mathbb{N}^*$ and large enough $x$ (recall the assumption $\nu<1/2$),
\begin{align}
\delta_{i,n}&\leq \mathbb{P}\bigg\{ \sup_{u\in[0,(i+1)/n]}
\big|\beta_n^{(p)}(u)-\mathbb{B}_n^{(p)}(u)\big|
\geq \frac{1}{\sqrt{n}} \left(c_1\log(i+1)+i^{\nu}(xn^{1/2-\nu}-a_{\nu})\right)\!\bigg\}
\nonumber\\
&
\leq B_p \sum_{k=2}^{p+1}\exp\!\left(-C_p\,(xn^{1/2-\nu}-a_{\nu})^{2/k}i^{2\nu/k}n^{1-2/k}\right)\!.
\label{ineg-delta}
\end{align}
In order to split the quantity $(xn^{1/2-\nu}-a_{\nu})^{2/k}$, we use the elementary inequality
$$(a+b)^{\mu}\geq 2^{\mu-1}\big(a^{\mu}+b^{\mu}\big),$$
which is valid for any $a,b\geq0$ and any $\mu\in(0,1]$.
If, in addition, $a$ and $b$ satisfy $a\geq 4b$, which entails
that $$a-2b\geq a/2,$$ then
$$(a-b)^{\mu}\geq 2^{\mu-1}\big((a-2b)^{\mu}+b^{\mu}\big)
\geq \frac12\big(a^{\mu}+b^{\mu}\big).$$ Thanks to this last inequality,
we can write, for any $i\geq d$, any $k\leq p+1$ and any $x\geq 4a_{\nu}n^{\nu-1/2}$,
that
\begin{align}
\lqn{\exp\!\left(-C_p\,(xn^{1/2-\nu}-a_{\nu})^{2/k}i^{2\nu/k}n^{1-2/k}\right)}
\nonumber
&
\leq\exp\!\left(-C_p''\,a_{\nu}^{2/k}i^{2\nu/k}n^{1-2/k}\right)
\exp\!\left(-C_p''\,x^{2/k}d^{2\nu/k}n^{1-(2\nu+1)/k}\right)
\nonumber\\
&
\leq\exp\!\left(-C_p''\,i^{2\nu/(p+1)}\right)
\exp\!\left(-C_p''\,x^{2/k}d^{2\nu/k}n^{1-(2\nu+1)/k}\right),
\label{ineg-exp}
\end{align}
where we set $C_p''=C_p/2$.
Consequently, by putting (\ref{ineg-exp}) into (\ref{ineg-delta}), and next this latter into
(\ref{ineg-Delta-delta1}) and (\ref{ineg-Delta-delta2}), we get
\begin{equation}\label{ineg-Delta12}
\mathbb{P}\big\{\Delta_{n,\nu}^{1,(p)}(d)\geq x\big\}
\quad\text{and}\quad
\mathbb{P}\big\{\Delta_{n,\nu}^{2,(p)}(d)\geq x\big\}
\leq \frac12\,B_{p,\nu} \sum_{k=2}^{p+1}
\exp\!\left(-C_p''\,x^{2/k}d^{2\nu/k}n^{1-(2\nu+1)/k}\right)\!,
\end{equation}
with the constant
$$
B_{p,\nu}:=2B_p\sum_{i=0}^{\infty} \exp\!\left(-C_p''i^{2\nu/(p+1)}\right)<+\infty.
$$
Finally, by (\ref{ineg-Delta-max}),
\begin{equation}\label{ineg-Delta-Delta1}
\mathbb{P}\big\{\Delta_{n,\nu}^{(p)}(d)\geq x\big\}
\leq \mathbb{P}\!\left\{\Delta_{n,\nu}^{1,(p)}(d)\geq \frac{x}{2}\right\}
+\mathbb{P}\!\left\{\Delta_{n,\nu}^{2,(p)}(d)\geq \frac{x}{2}\right\},
\end{equation}
and by putting (\ref{ineg-Delta12}) into (\ref{ineg-Delta-Delta1}), we
obtain (\ref{ineg-Delta1}) with $C_p'=C_p''/2^{2/(p+1)}$.
\hfill$\Box$

\subsection{Proof of Corollary \ref{corollary-O}}
Applying (\ref{ineg-Delta1}) to $x=c''\log n$ for a sufficiently large constant $c''$ yields, for large enough $n$,
\begin{align*}
\mathbb{P}\bigg\{\Delta_{n,\nu}^{(p)}(d)\geq c''\frac{\log n}{n^{1/2-\nu}}\bigg\}
\leq&\; B_{p,\nu} \sum_{k=2}^{p+1} \exp\!\left(-C_p'c''^{2/k}\,d^{2\nu/k}(\log n)^{2/k}n^{1-2/k}\right)
\\=&\;\mathcal{O}\!\left(\frac{1}{n^2}\right)\!.
\end{align*}
We conclude with the aid of Borel-Cantelli lemma as in Subsection~\ref{section-cor-approx}.
\hfill$\Box$

\subsection{Proof of Corollary~\ref{ximn}}

For each $m,n\in\mathbb{N}^*$, let $\alpha_m^{1,(p)}$ and $\alpha_n^{2,(p)}$
denote the empirical processes
respectively associated with the samples $X_1,\ldots, X_m$ and $Y_1,\ldots,Y_n$.
By replacing $\mathbb{F}_m^{(p)}(t)$ by $\alpha_m^{1,(p)}(t)/\sqrt{m}+F^{(p)}(t)$
and $\mathbb{G}_n^{(p)}(t)$ by $\alpha_n^{2,(p)}(t)/\sqrt{n}+G^{(p)}(t)$,
using the binomial theorem and recalling that, under $\mathcal{H}_0'$, $F=G$,
we write
\begin{align*}
\boldsymbol{\xi}_{m,n}^{(p,q)}(t)
&
=\sqrt{\frac{mn}{m+n}}\left[\left(\frac{\alpha_m^{1,(p)}(t)}{\sqrt{m}}+F^{(p)}(t)\right)^{\!\!q}
-\left(\frac{\alpha_n^{2,(p)}(t)}{\sqrt{n}}+F^{(p)}(t)\right)^{\!\!q}\right]
\\
&
=\sqrt{\frac{mn}{m+n}}\sum_{k=1}^q {q\choose k}\big(F^{(p)}(t)\big)^{q-k}
\left[\left(\frac{\alpha_m^{1,(p)}(t)}{\sqrt{m}}\right)^{\!\!k}
-\left(\frac{\alpha_n^{2,(p)}(t)}{\sqrt{n}}\right)^{\!\!k}\right]
\\
&
=q\big(F^{(p)}(t)\big)^{q-1} \left(\sqrt{\frac{n}{m+n}}\,\alpha_m^{1,(p)}(t)
-\sqrt{\frac{m}{m+n}}\,\alpha_n^{2,(p)}(t)\right) +\Delta_{m,n}(t)
\end{align*}
where
$$
\Delta_{m,n}(t)=\sqrt{\frac{mn}{m+n}}\sum_{k=2}^q {q\choose k}\big(F^{(p)}(t)\big)^{q-k}
\left[\left(\frac{\alpha_m^{1,(p)}(t)}{\sqrt{m}}\right)^{\!\!k}
-\left(\frac{\alpha_n^{2,(p)}(t)}{\sqrt{n}}\right)^{\!\!k}\right]\!.
$$
By (\ref{lli}) and (\ref{inequality}), it is easily seen that, with probability $1$, as $m,n\to\infty$,
\begin{equation}\label{Delta}
\sup_{t\in\mathbb{R}}|\Delta_{m,n}(t)|=\mathcal{O}\!\left(\frac{(\log\log m)^{q/2}}{\sqrt{m}}\right)
+\mathcal{O}\!\left(\frac{(\log\log n)^{q/2}}{\sqrt{n}}\right)\!.
\end{equation}
On the other hand, by Corollary~\ref{corollaryapprox}, we can construct two sequences of
Brownian bridges $\big\{\mathbb{B}_m^1:m\in\mathbb{N}^*\big\}$
and $\big\{\mathbb{B}_n^2:n\in\mathbb{N}^*\big\}$ such that, with probability $1$, as $m,n\to\infty$,
\begin{align}
\sup_{t\in\mathbb{R}} \left|\alpha_m^{1,(p)}(t)-\frac{1}{p!}\,
F(t)^p\,\mathbb{B}_m^1(F(t))\right|
&
=\mathcal{O}\!\left(\frac{\log m}{\sqrt{m}}\right)\!,
\nonumber\\[-1ex]
\label{ecart1}\\[-1ex]
\sup_{t\in\mathbb{R}} \left|\alpha_n^{2,(p)}(t)-\frac{1}{p!}\,
F(t)^p\,\mathbb{B}_n^2(F(t))\right|
&
=\mathcal{O}\!\left(\frac{\log n}{\sqrt{n}}\right)\!.
\nonumber
\end{align}
Setting $\mathbb{B}_{m,n}^{(p,q)}$ as in Corollary~\ref{ximn}, we have
\begin{align}
\boldsymbol{\xi}_{m,n}^{(p,q)}(t)-\mathbb{B}_{m,n}^{(p,q)}(t)=
&\;
\frac{q}{(p+1)!^{q-1}}\,F(t)^{(p+1)(q-1)}\left[
\sqrt{\frac{n}{m+n}}\left(\alpha_m^{1,(p)}(t)-\frac{1}{p!}\,F(t)^p\,
\mathbb{B}_m^1(F(t))\right)\right.
\nonumber\\
&
\left.-\sqrt{\frac{m}{m+n}}\left(\alpha_n^{2,(p)}(t)-\frac{1}{p!}\,F(t)^p\,
\mathbb{B}_n^2(F(t))\right)\right]+\Delta_{m,n}(t).
\label{ecart3}
\end{align}
By putting (\ref{Delta}) and (\ref{ecart1}) into (\ref{ecart3}),
we deduce the result announced in Corollary~\ref{ximn}.
\hfill$\Box$

\subsection{Proof of Theorem~\ref{theoremK}}
Let us introduce, for each $k\in\{1,\dots,K\}$, the $p$-fold integrated empirical
process associated with the d.f. $F^k$
$$
\alpha_n^{k,(p)}(t):=\sqrt{n}\left(\mathbb{F}_n^{k,(p)}(t)-F^{k,(p)}(t)\right)
\quad\text{for}\quad t\in\mathbb{R},\,n\in\mathbb{N}^*.
$$
By recalling (\ref{DD}) and making use of the most well-known variance formula
$$
\sum_{k=1}^K n_k(x_k-\bar{x})^2
=\sum_{k=1}^K n_k(x_k-x_0)^2 -\boldsymbol{|n|}\left(\bar{x}-x_0\right)^2,
$$
where we have denoted $$\boldsymbol{|n|}=\sum_{k=1}^K n_k ~~\mbox{and}~~
\bar{x}=\frac{1}{\boldsymbol{|n|}}\sum_{k=1}^K n_kx_k, ~~\mbox{
for} ~~\boldsymbol{n}=(n_1,\dots,n_K),$$ we rewrite
$\boldsymbol{\xi}_{K,\boldsymbol{n}}^{(p)}(t)$ under Hypothesis $\mathcal{H}_0^K$ as
\begin{align*}
\boldsymbol{\xi}_{K,\boldsymbol{n}}^{(p)}(t)
&
=\sum_{k=1}^K n_k\!
\left(\mathbb{F}_{n_k}^{k,(p)}(t)-F_0^{(p)}(t)\right)^{\!2}
-\frac{1}{\boldsymbol{|n|}}\left(\sum_{k=1}^K n_k\!
\left(\mathbb{F}_{n_k}^{k,(p)}(t)-F_0^{(p)}(t)\right)\right)^{\!\!2}
\\
&=\sum_{k=1}^K \alpha_{n_k}^{k,(p)}(t)^2
-\left(\sum_{k=1}^K \sqrt{\frac{n_k}{\boldsymbol{|n|}}}\,
\alpha_{n_k}^{k,(p)}(t)\right)^{\!\!2}.
\end{align*}
Next, setting $\mathbb{B}_{K,\boldsymbol{n}}^{(p)}$ as in Theorem~\ref{theoremK},
we have
\begin{equation}\label{ecartK}
\boldsymbol{\xi}_{K,\boldsymbol{n}}^{(p)}(t)-\mathbb{B}_{K,\boldsymbol{n}}^{(p)}(t)
=\Delta_{\boldsymbol{n}}^1(t)-\Delta_{\boldsymbol{n}}^2(t),
\end{equation}
where we put, for any $t\in\mathbb{R}$ and any
$\boldsymbol{n}=(n_1,\dots,n_K)\in\mathbb{N}^*$,
\begin{align*}
\Delta_{\boldsymbol{n}}^1(t)&=\sum_{k=1}^K \left(\alpha_{n_k}^{k,(p)}(t)^2
-\frac{F_0(t)^{2p}}{p!^2} \,\mathbb{B}_{n_k}^k\!\big(F_0(t)\big)^2\right)\!,
\\
\Delta_{\boldsymbol{n}}^2(t)&=\left(\sum_{k=1}^K \sqrt{\frac{n_k}{\boldsymbol{|n|}}}\,
\alpha_{n_k}^{k,(p)}(t)\right)^{\!\!2}
-\left(\frac{F_0(t)^p}{p!} \sum_{k=1}^K\sqrt{\frac{n_k}{\boldsymbol{|n|}}}\,
\mathbb{B}_{n_k}^k\!\big(F_0(t)\big)\right)^{\!\!2}\!.
\end{align*}
By setting, for any $k\in\{1,\dots,K\}$, any $t\in\mathbb{R}$ and any
$\boldsymbol{n}=(n_1,\dots,n_K)\in\mathbb{N}^*$,
$$
\delta_{k,\boldsymbol{n}}(t)=\alpha_{n_k}^{k,(p)}(t)-\frac{F_0(t)^p}{p!}
\,\mathbb{B}_{n_k}^k\!\big(F_0(t)\big),$$
and
$$
\epsilon_{k,\boldsymbol{n}}(t)=\alpha_{n_k}^{k,(p)}(t)+\frac{F_0(t)^p}{p!}
\,\mathbb{B}_{n_k}^k\!\big(F_0(t)\big)
$$
and writing $\Delta_{\boldsymbol{n}}^1(t)$ and $\Delta_{\boldsymbol{n}}^2(t)$ as
$$
\Delta_{\boldsymbol{n}}^1(t)=\sum_{k=1}^K \delta_{k,\boldsymbol{n}}(t)
\,\epsilon_{k,\boldsymbol{n}}(t),$$
and
$$
\Delta_{\boldsymbol{n}}^2(t)=\sum_{k=1}^K \sqrt{\frac{n_k}{\boldsymbol{|n|}}}
\,\delta_{k,\boldsymbol{n}}(t) \,\sum_{k=1}^K\sqrt{\frac{n_k}{\boldsymbol{|n|}}}
\,\epsilon_{k,\boldsymbol{n}}(t),
$$
we derive the following inequalities:
\begin{align}
\sup_{t\in\mathbb{R}} |\Delta_{\boldsymbol{n}}^1(t)|
&
\leq\sum_{k=1}^K \left(\sup_{t\in\mathbb{R}}
\left|\delta_{k,\boldsymbol{n}}(t)\right|\right)\!
\left(\sup_{t\in\mathbb{R}} \left|\epsilon_{k,\boldsymbol{n}}(t)\right|\right)\!,
\nonumber\\[-1ex]
\label{delta1}\\[-1ex]
\sup_{t\in\mathbb{R}} \big|\Delta_{\boldsymbol{n}}^2(t)\big|
&
\leq\left(\sum_{k=1}^K \sup_{t\in\mathbb{R}}
\left|\delta_{k,\boldsymbol{n}}(t)\right|\right)\!
\left(\sum_{k=1}^K \sup_{t\in\mathbb{R}} \left|\epsilon_{k,\boldsymbol{n}}(t)\right|\right)\!.
\nonumber
\end{align}
By (\ref{corollaryapprox}), (\ref{lli}) and (\ref{estimbridge}),
we get the bounds, a.s., for each $k\in\{1,\dots,K\}$, as $n_k\to\infty$,
\begin{equation}\label{deltaeps}
\sup_{t\in\mathbb{R}} \left|\delta_{k,\boldsymbol{n}}(t)\right|
=\mathcal{O}\!\left(\frac{\log n_k}{\sqrt{n_k}}\right)
\quad\text{and}\quad\sup_{t\in\mathbb{R}} \left|\epsilon_{k,\boldsymbol{n}}(t)\right|
=\mathcal{O}\!\left(\!\sqrt{\log\log n_k}\,\right)\!.
\end{equation}
Finally, by putting (\ref{deltaeps}) into (\ref{delta1}),
and next into (\ref{ecartK}), we finish the proof of Theorem~\ref{theoremK}.
\hfill$\Box$

\subsection{Proof of Theorem~\ref{theorem1}}

In the computations below, the superscript ``$-$'' in the quantities
$\mathbb{F}$, $\mathbb{F}^{(p)}$, $\alpha$ and $\beta$ refers to the $k$ first
observations while the superscript ``$+$'' refers to the $(n-k)$ last ones.
By (\ref{alpha}) and (\ref{alphatilde}), we write that, for  $n\in \mathbb{N}^*$,
$s\in(0,1)$ and $t\in\mathbb{R}$,
\begin{align*}
\widetilde{\alpha}_n^{(p)}(s,t)
&
=\frac{\lfloor ns\rfloor(n-\lfloor ns\rfloor)}{n^{3/2}}
\left[ \left(\mathbb{F}_{\lfloor ns\rfloor}^{(p)-}(t)-F^{(p)}(t)\right)
-\left(\mathbb{F}_{n-\lfloor ns\rfloor}^{(p)+}(t)-F^{(p)}(t)\right) \right]
\\
&
=\frac{\lfloor ns\rfloor(n-\lfloor ns\rfloor)}{n^{3/2}}\left(
\frac{\alpha_{\lfloor ns\rfloor}^{(p)-}(t)}{\sqrt{\lfloor ns\rfloor}}
-\frac{\alpha_{n-\lfloor ns\rfloor}^{(p)+}(t)}{\sqrt{n-\lfloor ns\rfloor}}\right)\!.
\end{align*}
Using (\ref{alphanp}), we derive, a.s., for any  $n\in \mathbb{N}^*$, any
$s\in(0,1)$ and any $t\in\mathbb{R}$, the form
\begin{equation}\label{equation78}
\widetilde{\alpha}_n^{(p)}(s,t)=\mathrm{I}_n(s,t)-\mathrm{II}_n(s,t)
+\mathrm{III}_n(s,t)+\mathrm{IV}_n(s,t)
\end{equation}
where
\begin{align*}
\mathrm{I}_n(s,t)
&
=\frac{\lfloor ns\rfloor(n-\lfloor ns\rfloor)}{p!\,n^{3/2}}\,
\frac{\alpha_{\lfloor ns\rfloor}^-(t)}{\sqrt{\lfloor ns\rfloor}}\,F(t)^p ,
\\
\mathrm{II}_n(s,t)
&
=\frac{\lfloor ns\rfloor(n-\lfloor ns\rfloor)}{p!\,n^{3/2}}\,
\frac{\alpha_{n-\lfloor ns\rfloor}^+(t)}{\sqrt{n-\lfloor ns\rfloor}}\,F(t)^p ,
\\
\mathrm{III}_n(s,t)
&
=\frac{\lfloor ns\rfloor(n-\lfloor ns\rfloor)}{n^{3/2}}\,
\sum_{k=2}^{p+1} b_k^{(p)}\,F(t)^{p+1-k}
\left(\frac{\alpha_{\lfloor ns\rfloor}^-(t)^k}{\lfloor ns\rfloor^{k/2}}
-\frac{\alpha_{n-\lfloor ns\rfloor}^+(t)^k}{(n-\lfloor ns\rfloor)^{k/2}}\right)\!,
\\
\mathrm{IV}_n(s,t)
&
=\frac{\lfloor ns\rfloor(n-\lfloor ns\rfloor)}{n^{3/2}}\,
\sum_{k=1}^p a_k^{(p)}\left(
\frac{\mathbb{F}_{\lfloor ns\rfloor}^-(t)^k}{\lfloor ns\rfloor^{p-k+1}}
-\frac{\mathbb{F}_{n-\lfloor ns\rfloor}^+(t)^k}{(n-\lfloor ns\rfloor)^{p-k+1}}\right)\!.
\end{align*}
Concerning $\mathrm{III}_n$, we have the estimate below:
\begin{align}
\left|\mathrm{III}_n(s,t)\right|
&
\leq \frac{\lfloor ns\rfloor(n-\lfloor ns\rfloor)}{n^{3/2}}\,\sum_{k=2}^{p+1}
\left[\left(\frac{\big|\alpha_{\lfloor ns\rfloor}^-(t)\big|}{\sqrt{\lfloor ns\rfloor}}\right)^{\!\!k}
+\left(\frac{\big|\alpha_{n-\lfloor ns\rfloor}^+(t)\big|}{\sqrt{n-\lfloor ns\rfloor}}\right)^{\!\!k}\right]
\nonumber\\
&
\leq \frac{1}{\sqrt{n}}\,\alpha_{\lfloor ns\rfloor}^-(t)^2\,\sum_{k=0}^{p-1}
\left(\frac{\big|\alpha_{\lfloor ns\rfloor}^-(t)\big|}{\sqrt{\lfloor ns\rfloor}}\right)^{\!\!k}
+\frac{1}{\sqrt{n}}\,\alpha_{n-\lfloor ns\rfloor}^+(t)^2\,\sum_{k=0}^{p-1}
\left(\frac{\big|\alpha_{n-\lfloor ns\rfloor}^+(t)\big|}{\sqrt{n-\lfloor ns\rfloor}}\right)^{\!\!k}
\label{III}
\end{align}
We learn from (\ref{inequality}) that $|\alpha_n(t)/\sqrt{n}|
=|\mathbb{F}_n(t)-F(t)|\leq 1$ for any $t\in\mathbb{R}$ and any
$n\in\mathbb{N}^*$ and, of course, similar inequalities hold for
$\alpha_{\lfloor ns\rfloor}^-$ and $\alpha_{n-\lfloor ns\rfloor}^+$.
We deduce that both sums displayed in (\ref{III}) are not greater than $p$ and
by (\ref{empililaeae}), with probability~$1$, as $n \to\infty$, uniformly in
$s$ and $t$,
\begin{equation}\label{IIIbis}
\left|\mathrm{III}_n(s,t)\right|
\leq \frac{p}{\sqrt{n}}\,\left(\alpha_{\lfloor ns\rfloor}^-(t)^2
+\alpha_{n-\lfloor ns\rfloor}^+(t)^2\right)
=\mathcal{O}\!\left(\frac{\log \log n}{\sqrt{n}}\right)\!.
\end{equation}
Concerning $\mathrm{IV}_n$, we have the estimate below:
\begin{equation}\label{IV}
\left|\mathrm{IV}_n(s,t)\right|
\leq A_p'\!\left(\frac{n-\lfloor ns\rfloor}{n^{3/2}}\,
\sum_{k=1}^p \frac{\mathbb{F}_{\lfloor ns\rfloor}^-(t)^k}{\lfloor ns\rfloor^{p-k}}
+\frac{\lfloor ns\rfloor}{n^{3/2}}\, \sum_{k=1}^p
\frac{\mathbb{F}_{n-\lfloor ns\rfloor}^+(t)^k}{(n-\lfloor ns\rfloor)^{p-k}}\right)
\end{equation}
where we set $A_p':=\max_{1\leq i\leq p}a_i^{(p)}>0$.
Because of (\ref{inequality}) and the convention that
$\mathbb{F}_{\lfloor ns\rfloor}^-=0$ if $s\in(0,1/n)$,
we see that both sums displayed in (\ref{IV}) are not greater than $p$ and,
as $n \to\infty$, uniformly in $s$ and $t$,
\begin{equation}\label{IVbis}
\mathrm{IV}_n(s,t)=\mathcal{O}\!\left(\frac{1}{\sqrt{n}}\right)\!.
\end{equation}
As a byproduct, we get from (\ref{equation78}), (\ref{IIIbis}) and (\ref{IVbis})
that, with probability~$1$, as $n \to\infty$, uniformly in $s$ and $t$,
\begin{equation}\label{alphawide}
\widetilde{\alpha}_n^{(p)}(s,t)=\mathrm{I}_n(s,t)-\mathrm{II}_n(s,t)
+\mathcal{O}\!\left(\frac{\log \log n}{\sqrt{n}}\right)\!.
\end{equation}

Next, it is convenient to introduce, for $n\in \mathbb{N}^*$ and $s,u\in(0,1)$,
\begin{align*}
\gamma_n(u)
&
=\sqrt{n}\,\beta_n(u)=\sum_{i=1}^n\left(\mathbbm{1}_{\{ U_i\leq u\}}-u\right)\!,
\\
\gamma_{\lfloor ns\rfloor}^-(u)
&
=\sqrt{\lfloor ns\rfloor}\,\beta_{\lfloor ns\rfloor}^-(u)
=\sum_{i=1}^{\lfloor ns\rfloor}\left(\mathbbm{1}_{\{ U_i\leq u\}}-u\right)\!,
\\
\gamma_{n-\lfloor ns\rfloor}^+(u)
&
=\sqrt{n-\lfloor ns\rfloor}\,\beta_{n-\lfloor ns\rfloor}^+(u)
=\sum_{i=\lfloor ns\rfloor+1}^n \left(\mathbbm{1}_{\{ U_i\leq u\}}-u\right)\!,
\\
\mathrm{I}_n'(s,u)&=\frac{\lfloor ns\rfloor(n-\lfloor ns\rfloor)}{p!\,n^{3/2}}
\,\frac{\beta_{\lfloor ns\rfloor}^-(u)}{\sqrt{\lfloor ns\rfloor}}\,u^p
=\frac{n-\lfloor ns\rfloor}{p!\,n^{3/2}} \,u^p \,\gamma_{\lfloor ns\rfloor}^-(u),
\\
\mathrm{II}_n'(s,u)&=\frac{\lfloor ns\rfloor(n-\lfloor ns\rfloor)}{p!\,n^{3/2}}
\,\frac{\beta_{n-\lfloor ns\rfloor}^+(u)}{\sqrt{n-\lfloor ns\rfloor}}\,u^p
=\frac{\lfloor ns\rfloor}{p!\,n^{3/2}} \,u^p \,\gamma_{n-\lfloor ns\rfloor}^+(u),
\end{align*}
and
$$
\delta_n(s,u)=\mathrm{I}_n'(s,u)-\mathrm{II}_n'(s,u).
$$
Then, by (\ref{identity}), we plainly have the following equalities:
\begin{align*}
\mathrm{I}_n'(s,F(t))&=\mathrm{I}_n(s,t),\\
\mathrm{II}_n'(s,F(t))&=\mathrm{II}_n(s,t),\\
\gamma_n(u)&=\gamma_{\lfloor ns\rfloor}^-(u)+\gamma_{n-\lfloor ns\rfloor}^+(u),
\end{align*}
and we rewrite (\ref{alphawide}), with probability~$1$, as $n \to\infty$,
uniformly in $s$ and $t$, as
\begin{equation}\label{equatioreffree}
\widetilde{\alpha}_n^{(p)}(s,t)= \delta_n(s,F(t))
+\mathcal{O}\!\left(\frac{\log \log n}{\sqrt{n}}\right)\!.
\end{equation}

Now, let us rewrite $\delta_n(s,u)$ as
\begin{align}
\delta_n(s,u)&=\frac{u^p}{p!\,\sqrt{n}}\left( \gamma_{\lfloor ns\rfloor}^-(u)
-\frac{\lfloor ns\rfloor}{n}\,\gamma_n(u)\right)\nonumber\\
&=\frac{u^p}{p!\,\sqrt{n}}\left( \frac{n-\lfloor ns\rfloor}{n}\,
\gamma_n(u)-\gamma_{n-\lfloor ns\rfloor}^+(u)\right)\!.\label{h-n15,31}
\end{align}
We know from \cite{KMT1975} and \cite{Csorgo1997} that, with probability $1$,
as $n\to\infty$,
\begin{align}
\sup_{s\in[0,1/2]}\sup_{u\in[0,1]}\left| \gamma_{\lfloor ns\rfloor}^-(u)
-\mathbb{K}_2(\lfloor ns\rfloor,u)\right| &=\mathcal{O}\!\left((\log n)^2\right)\!,
\nonumber\\[-1ex]
\label{h-n15,36}\\[-1ex]
\sup_{s\in[1/2,1]}\sup_{u\in[0,1]}\left| \gamma_{n-\lfloor ns\rfloor}^+(u)
-\mathbb{K}_1(\lfloor ns\rfloor,u)\right|&=\mathcal{O}\!\left((\log n)^2\right)\!.
\nonumber
\end{align}
In particular, we have, with probability $1$, as $n\to\infty$,
\begin{align}
\sup_{u\in[0,1]}\left| \gamma_{\lfloor n/2\rfloor}^-(u)
-\mathbb{K}_2(\lfloor n/2\rfloor,u)\right| &=\mathcal{O}\!\left((\log n)^2\right)\!,
\label{h-n15,36a}\\[1ex]
\sup_{u\in[0,1]}\left| \gamma_{n-\lfloor n/2\rfloor}^+(u)
-\mathbb{K}_1(\lfloor n/2\rfloor,u)\right|&=\mathcal{O}\!\left((\log n)^2\right)\!.
\label{h-n15,36b}
\end{align}
As a byproduct, by adding (\ref{h-n15,36a}) and (\ref{h-n15,36b}), we readily
infer that, with probability $1$, as $n\to\infty$,
\begin{equation}\label{prth7}
\sup_{u\in[0,1]}\left| \gamma_n(u)-\big( \mathbb{K}_1(\lfloor n/2\rfloor,u)
+\mathbb{K}_2(\lfloor n/2\rfloor,u)\big)\right|=\mathcal{O}\!\left((\log n)^2\right)\!.
\end{equation}
Recall the definition of the process
$\overset{\text{\tiny o}}{\mathbb{K}}\vphantom{K}_n^{(p)}$ given by (\ref{Knp}).
From (\ref{h-n15,31}), (\ref{h-n15,36}), and (\ref{prth7}), we deduce that,
with probability $1$, as $n\to\infty$,
\begin{equation}\label{ecart}
\sup_{s,u\in[0,1]} \big|\delta_n(s,u)
-\overset{\text{\tiny o}}{\mathbb{K}}\vphantom{K}_n^{(p)}(s,u)\big|
=\mathcal{O}\!\left(\frac{(\log n)^2}{\sqrt{n}}\right)\!.
\end{equation}
Finally, we conclude by using the triangle inequality
\begin{align}
\sup_{s\in[0,1]}\sup_{t\in\mathbb{R}} \big|\widetilde{\alpha}_n^{(p)}(s,t)
-\overset{\text{\tiny o}}{\mathbb{K}}\vphantom{K}_n^{(p)}(s,F(t))\big|\leq
&\sup_{s\in[0,1]}\sup_{t\in\mathbb{R}} \big|\widetilde{\alpha}_n^{(p)}(s,t)
-\delta_n(s,F(t))\big|
\nonumber\\
&\;+\sup_{s,u\in[0,1]} \big|\delta_n(s,u)
-\overset{\text{\tiny o}}{\mathbb{K}}\vphantom{K}_n^{(p)}(s,u)\big|
\label{ecartbis}
\end{align}
and next by putting (\ref{equatioreffree}) and (\ref{ecart}) into (\ref{ecartbis}).
The proof of Theorem~\ref{theorem1} is completed.
\hfill$\Box$

\subsection{Proof of Theorem~\ref{theoremBurkeetal}}

Recall the definition of $\widehat{\alpha}_n^{(p)}(t)$ given by (\ref{alphabarchap})
and write it as follows: by using (\ref{alphanp}) \textit{mutatis mutandis},
a.s., for any $t\in\mathbb{R}$ and any $n\in\mathbb{N}^*$,
\begin{equation}\label{alphanpestim}
\widehat{\alpha}_n^{(p)}(t)
=\frac{1}{p!}\,F\big(t,\widehat{\boldsymbol{\theta}}_n\big)^p\,\widehat{\alpha}_n(t)
+\sum_{k=2}^{p+1} b_k^{(p)}\,F\big(t,\widehat{\boldsymbol{\theta}}_n\big)^{p+1-k}
\,\frac{\widehat{\alpha}_n(t)^k}{n^{(k-1)/2}}
+\sum_{k=1}^p a_k^{(p)}\,\frac{\mathbb{F}_n(t)^k}{n^{p-k+1/2}}.
\end{equation}
Substituting $\widehat{\alpha}_n(t)=\big(\widehat{\alpha}_n(t)-G_n(t)\big)+G_n(t)$
into (\ref{alphanpestim}) and using the binomial theorem yield, a.s.,
for any $t\in\mathbb{R}$ and any $n\in\mathbb{N}^*$,
\begin{align}
\widehat{\alpha}_n^{(p)}(t)-G_n^{(p)}(t)=
&\, \frac{1}{p!}\,F\big(t,\widehat{\boldsymbol{\theta}}_n\big)^p
\big(\widehat{\alpha}_n(t)-G_n(t)\big)
+\frac{1}{p!}\left(F\big(t,\widehat{\boldsymbol{\theta}}_n\big)^p
-F(t,\boldsymbol{\theta}_0)^p\right)G_n(t)
+\sum_{k=1}^p a_k^{(p)}\,\frac{\mathbb{F}_n(t)^k}{n^{p-k+1/2}}
\nonumber\\
&+\sum_{k=2}^{p+1} b_k^{(p)}\,\frac{F\big(t,
\widehat{\boldsymbol{\theta}}_n\big)^{p+1-k}}{n^{(k-1)/2}}
\sum_{i=0}^k {k\choose i} G_n(t)^{k-i} \big(\widehat{\alpha}_n(t)-G_n(t)\big)^i.
\label{alphanpestimbis}
\end{align}
By (\ref{inequality}) and by appealing to the elementary identity
$a^p-b^p=(a-b)\sum_{i=0}^{p-1}a^ib^{p-i-1}$, we extract from
(\ref{alphanpestimbis}) the following inequality: a.s., for any $n\in\mathbb{N}^*$,
\begin{align*}
\sup_{t\in\mathbb{R}} \big|\widehat{\alpha}_n^{(p)}(t)-G_n^{(p)}(t)\big|\leq
&\, \sup_{t\in\mathbb{R}} \big|\widehat{\alpha}_n(t)-G_n(t)\big|
+\sup_{t\in\mathbb{R}} \left|F\big(t,\widehat{\boldsymbol{\theta}}_n\big)
-F(t,\boldsymbol{\theta}_0)\right| \,\sup_{t\in\mathbb{R}} |G_n(t)|
+\frac{A_p}{\sqrt{n}}
\\
&+\frac{A_p''}{\sqrt{n}} \left(\sum_{k=0}^{p+1} \sup_{t\in\mathbb{R}} |G_n(t)|^k\right)\!
\left(\sum_{k=0}^{p+1} \sup_{t\in\mathbb{R}} \big|\widehat{\alpha}_n(t)-G_n(t)\big|^k\right)
\end{align*}
where we set $A_p'':=\max_{0\leq i,k\leq p}{k\choose i}>0$.
Recall the notation (\ref{epsilon}) of $\boldsymbol{\varepsilon}_n^{(p)}$ and set
$$
\boldsymbol{\eta}_n:=\sup_{t\in\mathbb{R}}\left|\widehat{\alpha}_n(t)-G_n(t)\right|\!.
$$
We have thus obtained the inequality
\begin{align}
\boldsymbol{\varepsilon}_n^{(p)}\leq&\; \boldsymbol{\eta}_n
+\sup_{t\in\mathbb{R}}\left|F\big(t,\widehat{\boldsymbol{\theta}}_n\big)
-F(t,\boldsymbol{\theta}_0)\right| \,\sup_{t\in\mathbb{R}}|G_n(t)|
\nonumber
\\&+\frac{A_p''}{\sqrt{n}}\left(\sum_{k=0}^p\boldsymbol{\eta}_n^k\right)\!
\left(\sum_{k=0}^p \sup_{t\in\mathbb{R}}|G_n(t)|^k\right) +\frac{A_p}{\sqrt{n}}.
\label{reffequa}
\end{align}
We know from Theorem 3.1 of \cite{Burke-Csorgo} that $\boldsymbol{\eta}_n$
satisfies the same limiting results than those displayed in Theorem
\ref{theoremBurkeetal} for $\boldsymbol{\varepsilon}_n^{(p)}$.

Next, we need to derive some bounds for $\sup_{t\in\mathbb{R}}|G_n(t)|$ and
$\sup_{t\in\mathbb{R}}\big|F\big(t,\widehat{\boldsymbol{\theta}}_n\big)
-F(t,\boldsymbol{\theta}_0)\big|$ as $n\to\infty$. First, by using
(\ref{estimbridge}) and noticing that the same bound holds true for
$\mathbf{ W}(n)$, and by Condition (iv) and the definition of $G_n(t)$,
we see that, with probability $1$, as $n\to\infty$,
\begin{equation}\label{reffffreco}
\sup_{t\in\mathbb{R}}|G_n(t)|=\mathcal{O}\!\left(\!\sqrt{\log \log n}\right)\!.
\end{equation}
On the other hand, using the one-term Taylor expansion of
$F(\cdot,\boldsymbol{\theta})$ with respect to $\boldsymbol{\theta}_0$,
there exists $\boldsymbol{\theta}_n^*$ lying in the segment
$\big[\boldsymbol{\theta}_0, \widehat{\boldsymbol{\theta}}_n\big]$ such that
\begin{equation}\label{reffffreco23}
F\big(t,\widehat{\boldsymbol{\theta}}_n\big)-F(t,\boldsymbol{\theta}_0)
=\left(\widehat{\boldsymbol{\theta}}_n-\boldsymbol{\theta}_0\right)
\nabla_{\boldsymbol{\theta}}F(t,\boldsymbol{\theta}_n^*)^{\top}.
\end{equation}

In case (a) of Theorem~\ref{theoremBurkeetal},
$\sqrt{n}\big(\widehat{\boldsymbol{\theta}}_n-\boldsymbol{\theta}_0\big)$
is asymptotically normal and then
$n^{1/4}\big(\widehat{\boldsymbol{\theta}}_n-\boldsymbol{\theta}_0\big)$
tends to zero as $n \to\infty$ in probability.
Therefore, by (\ref{reffffreco}) and (\ref{reffffreco23}),
$\sup_{t\in\mathbb{R}}\big|F\big(t,\widehat{\boldsymbol{\theta}}_n\big)
-F(t,\boldsymbol{\theta}_0)\big|\sup_{t\in\mathbb{R}}|G_n(t)|$
also tends to zero as $n\to\infty$, in probability. Putting this into
(\ref{reffequa}), we easily complete the proof of Theorem
\ref{theoremBurkeetal} in this case. In cases (b) and (c) of Theorem
\ref{theoremBurkeetal}, referring to \cite{Burke-Csorgo} p.~779, we have the
following bound for $\widehat{\boldsymbol{\theta}}_n-\boldsymbol{\theta}_0$:
with probability $1$, as $n\to\infty$,
$$
\widehat{\boldsymbol{\theta}}_n-\boldsymbol{\theta}_0
=\mathcal{O}\!\left(\!\sqrt{\frac{\log \log n}{n}}\,\right)\!.
$$
By putting this into (\ref{reffffreco23}) and next in (\ref{reffequa}) with the
aid of (\ref{reffffreco}), we complete the proof of Theorem
\ref{theoremBurkeetal} in these two cases.

Finally, concerning $\widehat{G}_n^{(p)}(t)$, we have
\begin{align*}
\widehat{G}_n^{(p)}(t)-G_n^{(p)}(t)=&\;\frac{1}{p!}\,F\big(t,\widehat{\boldsymbol{\theta}}_n\big)^p
\left(\widehat{G}_n(t)-G_n(t)\right)
\\&+\frac{1}{p!}
\left(F\big(t,\widehat{\boldsymbol{\theta}}_n\big)^p-F(t,{\boldsymbol{\theta}}_0)^p\right)G_n(t),
\end{align*}
from which we deduce
\begin{align*}
\sup_{t\in\mathbb{R}}\left|\widehat{G}_n^{(p)}(t)-G_n^{(p)}(t)\right|\leq&\;
\sup_{t\in\mathbb{R}}\left|\widehat{G}_n(t)-G_n(t)\right|
\\&+\sup_{t\in\mathbb{R}}\left|F\big(t,\widehat{\boldsymbol{\theta}}_n\big)
-F(t,{\boldsymbol{\theta}}_0)\right| \, \sup_{t\in\mathbb{R}}|G_n(t)|.
\end{align*}
Using the same bounds than previously, we immediately derive (\ref{eqreferrrz11}).
\hfill$\Box$

\appendix\section{Appendix : other integrated empirical distribution functions}

To end up this article, let us point out that a similar analysis may be carried
out with other integrated empirical d.f.s and integrated processes.
For instance, we present below two other families of integrated empirical d.f.'s.
The underlying d.f. $F$ is still assumed to be continuous.

\begin{definition}\label{defbis}
We define the families of integrated d.f.'s and integrated empirical d.f.'s,
for any $p\in\mathbb{N}$, any $n\in\mathbb{N}^*$ and any $t\in\mathbb{R}$, as
$$
\widetilde{F}^{(p)}(t):=\int_{-\infty}^t F(s)^p\,dF(s),\quad
\widetilde{\mathbb{F}}_n^{(p)}(t):=\int_{-\infty}^t \mathbb{F}_n(s)^p\,d\mathbb{F}_n(s)
$$
and
$$
\breve{F}^{(p)}(t):=\int_{-\infty}^t \big(F(t)-F(s)\big)^p\,dF(s),\quad
\breve{\mathbb{F}}_n^{(p)}(t):=\int_{-\infty}^t \big(\mathbb{F}_n(t)
-\mathbb{F}_n(s)\big)^p\,d\mathbb{F}_n(s)
$$
together with the corresponding family of integrated empirical processes as
\begin{align*}
\widetilde{\alpha}_n^{(p)}(t)&:=\sqrt{n}\left(\widetilde{\mathbb{F}}_n^{(p)}(t)
-\widetilde{F}^{(p)}(t)\right)\!,\\
\breve{\alpha}_n^{(p)}(t)&:=\sqrt{n}\left(\breve{\mathbb{F}}_n^{(p)}(t)
-\breve{F}^{(p)}(t)\right)\!.
\end{align*}
\end{definition}

We have, from (\ref{general}), a.s., for any $p\in\mathbb{N}$, any $n\in\mathbb{N}^*$
and any $t\in\mathbb{R}$,
$$
\widetilde{\mathbb{F}}_n^{(p)}(t)
=\frac{1}{n} \sum_{i=1}^{n\mathbb{F}_n(t)} \mathbb{F}_n(X_{i,n})^p
\quad\text{and}\quad
\breve{\mathbb{F}}_n^{(p)}(t)
=\frac{1}{n} \sum_{i=1}^{n\mathbb{F}_n(t)} \big(\mathbb{F}_n(t)-\mathbb{F}_n(X_{i,n})\big)^p.
$$
Since $\mathbb{F}_n(X_{i,n})=i/n$, we obtain the following closed forms.
\begin{proposition}\label{expFptilde}
For each $p\in\mathbb{N}$, we explicitly have, with probability~$1$,
$$
\widetilde{F}^{(p)}(t)=\breve{F}^{(p)}(t)=\frac{F(t)^{p+1}}{p+1}
\quad\text{for}\quad t\in\mathbb{R},
$$
and
\begin{equation}\label{repFnp1bis}
\widetilde{\mathbb{F}}_n^{(p)}(t)=\frac{1}{n^{p+1}} \sum_{i=1}^{n\mathbb{F}_n(t)} i^p,\quad
\breve{\mathbb{F}}_n^{(p)}(t)=\frac{1}{n^{p+1}} \sum_{i=0}^{n\mathbb{F}_n(t)-1} i^p
\quad\text{for}\quad t\in\mathbb{R},\,n\in\mathbb{N}^*.
\end{equation}
\end{proposition}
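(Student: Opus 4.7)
The plan is to handle the four claims in parallel, splitting cleanly into the theoretical (deterministic) identities and the empirical (combinatorial) identities.

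For the two theoretical identities, I would perform a single change of variable $u = F(s)$, $du = dF(s)$, which is valid since $F$ is continuous. This immediately gives
\[
\widetilde{F}^{(p)}(t) = \int_0^{F(t)} u^p\,du = \frac{F(t)^{p+1}}{p+1},
\]
and for $\breve{F}^{(p)}$ the same substitution followed by $v = F(t)-u$ yields
\[
\breve{F}^{(p)}(t) = \int_0^{F(t)} (F(t)-u)^p\,du = \int_0^{F(t)} v^p\,dv = \frac{F(t)^{p+1}}{p+1}.
\]
Both computations are essentially identical to the induction used in the proof of Proposition~\ref{expFp} (reduced here to a one-step calculation because the nesting of integrals has been replaced by a power).

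For the two empirical identities, I would start from the observation already exploited in the proof of Proposition~\ref{expFp}: by (\ref{general}), with probability $1$, for any integrand $f$,
\[
\int_{-\infty}^t f(s)\,d\mathbb{F}_n(s) = \frac{1}{n}\sum_{i=1}^{n\mathbb{F}_n(t)} f(X_{i,n}),
\]
combined with the almost sure identity $\mathbb{F}_n(X_{i,n}) = i/n$ (valid because continuity of $F$ ensures the $X_i$'s are pairwise distinct almost surely, so the order statistics $X_{1,n}<\dots<X_{n,n}$ are well-defined). Applying this with $f(s) = \mathbb{F}_n(s)^p$ gives $\widetilde{\mathbb{F}}_n^{(p)}(t) = \frac{1}{n}\sum_{i=1}^{n\mathbb{F}_n(t)} (i/n)^p$, which is the claimed formula. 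For $\breve{\mathbb{F}}_n^{(p)}$, applying it with $f(s) = (\mathbb{F}_n(t)-\mathbb{F}_n(s))^p$ and writing $k := n\mathbb{F}_n(t)$ (so that $\mathbb{F}_n(t) = k/n$) gives
\[
\breve{\mathbb{F}}_n^{(p)}(t) = \frac{1}{n}\sum_{i=1}^{k}\left(\frac{k-i}{n}\right)^{\!p} = \frac{1}{n^{p+1}}\sum_{i=1}^k (k-i)^p,
\]
and the substitution $j = k-i$ re-indexes the sum to $\sum_{j=0}^{k-1} j^p$, which is the desired formula.

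There is no real obstacle here: the only subtle point is the almost sure existence of the order statistics and the identity $\mathbb{F}_n(X_{i,n}) = i/n$, both of which were already established in the proof of Proposition~\ref{expFp} and can simply be invoked. The four claims then reduce to one substitution in the integral case and one re-indexation of a finite sum in the empirical case.
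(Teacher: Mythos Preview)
Your proposal is correct and follows essentially the same approach as the paper: the paper derives the empirical identities in the sentence preceding the proposition by applying formula~(\ref{general}) with $f(s)=\mathbb{F}_n(s)^p$ and $f(s)=(\mathbb{F}_n(t)-\mathbb{F}_n(s))^p$ and then using $\mathbb{F}_n(X_{i,n})=i/n$, exactly as you do, while the theoretical identities are stated without proof (your change of variable $u=F(s)$ is the obvious route). Your write-up simply makes the re-indexing $j=k-i$ for $\breve{\mathbb{F}}_n^{(p)}$ explicit, which the paper leaves implicit.
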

Observe the relation, a.s. valid, for all $p\in\mathbb{N}^*$, any $n\in\mathbb{N}^*$
and any $t\in\mathbb{R}$,
$$
\breve{\mathbb{F}}_n^{(p)}(t)=\widetilde{\mathbb{F}}_n^{(p)}(t)-\frac{1}{n}\,F_n(t)^p.
$$

\begin{proposition}\label{repFnp3bis}
The empirical d.f.\/ $\widetilde{\mathbb{F}}_n^{(p)}$ can be expressed by means of\/
$\mathbb{F}_n$ as follows: with probability~$1$,
\begin{equation}\label{repFnp2bis}
\widetilde{\mathbb{F}}_n^{(p)}(t)=\frac{\mathbb{F}_n(t)^{p+1}}{p+1}
+\sum_{k=1}^p \widetilde{a}_k^{(p)}\,\frac{\mathbb{F}_n(t)^k}{n^{p-k+1}}
\quad\text{for}\quad t\in\mathbb{R},\,n\in\mathbb{N}^*,
\end{equation}
where the coefficients $\widetilde{a}_k^{(p)}$, $1\leq k\leq p$, are rational numbers.
\end{proposition}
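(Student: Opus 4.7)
The plan is to start from the explicit closed form given by Proposition \ref{expFptilde}, namely, with probability~$1$,
$$
\widetilde{\mathbb{F}}_n^{(p)}(t) = \frac{1}{n^{p+1}} \sum_{i=1}^{n\mathbb{F}_n(t)} i^p,
$$
and to reduce the identity (\ref{repFnp2bis}) to a purely algebraic statement about the power sum $S_p(N):=\sum_{i=1}^{N} i^p$ evaluated at the (almost surely) integer value $N=n\mathbb{F}_n(t)$.

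The key ingredient is the classical Faulhaber/Bernoulli formula, which asserts that for every $p\in\mathbb{N}^*$ there exist rational numbers $c_0^{(p)},\dots,c_p^{(p)}$ (expressible through Bernoulli numbers) such that
$$
S_p(N)=\sum_{i=1}^N i^p=\frac{N^{p+1}}{p+1}+\sum_{k=1}^p c_k^{(p)}\,N^k\quad\text{for every }N\in\mathbb{N}.
$$
This can be taken as a known result, but if needed one obtains it by an immediate induction on $p$ using the telescoping identity $(N+1)^{p+2}-1=\sum_{i=1}^N\sum_{j=0}^{p+1}\binom{p+2}{j}i^j$, which expresses $S_{p+1}(N)$ as a polynomial of degree $p+2$ in $N$ with rational coefficients and leading coefficient $1/(p+2)$, provided the corresponding statement holds at lower orders.

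Substituting $N=n\mathbb{F}_n(t)$ into Faulhaber's formula and dividing by $n^{p+1}$, we obtain, almost surely,
$$
\widetilde{\mathbb{F}}_n^{(p)}(t)
=\frac{\mathbb{F}_n(t)^{p+1}}{p+1}
+\sum_{k=1}^{p} c_k^{(p)}\,\frac{\mathbb{F}_n(t)^k}{n^{p-k+1}},
$$
which is exactly (\ref{repFnp2bis}) with $\widetilde{a}_k^{(p)}:=c_k^{(p)}\in\mathbb{Q}$.

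There is no real obstacle here: the entire proof hinges on recognising that the right-hand side of the expression for $\widetilde{\mathbb{F}}_n^{(p)}(t)$ in Proposition \ref{expFptilde} is precisely $n^{-(p+1)}S_p(n\mathbb{F}_n(t))$, and on applying Faulhaber's formula. The only point requiring a brief remark is that $n\mathbb{F}_n(t)$ is indeed an integer almost surely (as $F$ is continuous, so ties occur with probability zero), which legitimises the use of the discrete summation formula.
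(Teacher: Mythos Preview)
Your proof is correct and follows essentially the same route as the paper: both start from the closed form $\widetilde{\mathbb{F}}_n^{(p)}(t)=n^{-(p+1)}\sum_{i=1}^{n\mathbb{F}_n(t)} i^p$ of Proposition~\ref{expFptilde} and invoke the Bernoulli/Faulhaber formula for power sums to obtain~(\ref{repFnp2bis}). The paper is slightly more explicit in that it records the coefficients as $\widetilde{a}_k^{(p)}=\binom{p+1}{k}B_{p+1-k}/(p+1)$ for $1\le k\le p-1$ and $\widetilde{a}_p^{(p)}=1/2$, but the argument is the same.
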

\begin{proof}
Appealing to Bernoulli's formula
$$
\sum_{i=1}^n i^p=\frac{1}{p+1}\,n^{p+1}+\frac{1}{2}\,n^p
+\frac{1}{p+1}\,\sum_{k=1}^{p-1} {p+1 \choose k} B_{p+1-k}n^k
$$
where the $B_k$'s are the Bernoulli numbers
(see, e.g., http://en.wikipedia.org/wiki/Bernoulli\_number),
(\ref{repFnp1bis}) immediately yields (\ref{repFnp2bis}) with the coefficients
$$\widetilde{a}_k^{(p)}:={p+1 \choose k} B_{p+1-k}/(p+1) \quad\mbox{for}\quad k\in\{1,\dots,p-1\},$$
and $\widetilde{a}_p^{(p)}:=1/2$.
\end{proof}

Below, we state the expression of $\widetilde{\alpha}_n^{(p)}$ by means of
$\alpha_n$ analogous to (\ref{alphanp}).
\begin{proposition}
The integrated empirical process $\widetilde{\alpha}_n^{(p)}$ is related to
the empirical process $\alpha_n$ according to, with probability~$1$,
$$
\widetilde{\alpha}_n^{(p)}(t)=F(t)^p\,\alpha_n(t)
+\sum_{k=2}^{p+1} \widetilde{b}_k^{(p)}\,\frac{F(t)^{p+1-k}}{n^{(k-1)/2}}\,\alpha_n(t)^k
+\sum_{k=1}^p \widetilde{a}_k^{(p)}\,\frac{\mathbb{F}_n(t)^k}{n^{p-k+1/2}}
\quad\text{for}\quad t\in\mathbb{R}, \,n\in\mathbb{N}^*,
$$
where the coefficients $\widetilde{a}_k^{(p)}$, $1\leq k\leq p$, are those of
Proposition~\ref{repFnp3bis} and the $\widetilde{b}_k^{(p)}$, $2\leq k\leq p+1$,
are positive real numbers less than $p!$.
\end{proposition}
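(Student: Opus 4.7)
The plan is to mimic closely the derivation of formula (\ref{alphanp}), since the argument is essentially a matter of unwinding the explicit representation of $\widetilde{\mathbb{F}}_n^{(p)}$ given in Proposition~\ref{repFnp3bis}, combined with the binomial identity, and the definition $\sqrt{n}(\mathbb{F}_n(t)-F(t))=\alpha_n(t)$. Since $\widetilde{F}^{(p)}(t)=F(t)^{p+1}/(p+1)$ from Proposition~\ref{expFptilde} and $\widetilde{\mathbb{F}}_n^{(p)}(t)=\mathbb{F}_n(t)^{p+1}/(p+1)+\sum_{k=1}^{p}\widetilde{a}_k^{(p)}\mathbb{F}_n(t)^k/n^{p-k+1}$ from Proposition~\ref{repFnp3bis}, multiplying the difference by $\sqrt{n}$ gives, almost surely,
$$
\widetilde{\alpha}_n^{(p)}(t)=\frac{\sqrt{n}}{p+1}\bigl(\mathbb{F}_n(t)^{p+1}-F(t)^{p+1}\bigr)+\sum_{k=1}^{p}\widetilde{a}_k^{(p)}\,\frac{\mathbb{F}_n(t)^k}{n^{p-k+1/2}}.
$$

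Next, I would invoke the elementary identity (already used to prove (\ref{alphanp}))
$$
a^{p+1}-b^{p+1}=(p+1)\,b^p(a-b)+\sum_{k=2}^{p+1}\binom{p+1}{k}b^{p+1-k}(a-b)^k,
$$
with $a=\mathbb{F}_n(t)$ and $b=F(t)$, and then substitute $\mathbb{F}_n(t)-F(t)=\alpha_n(t)/\sqrt{n}$. The linear-in-$(a-b)$ term produces exactly $F(t)^p\,\alpha_n(t)$, while the higher-order terms give $\binom{p+1}{k}F(t)^{p+1-k}\alpha_n(t)^k/n^{(k-1)/2}$ (for $k=2,\dots,p+1$), divided by $p+1$. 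Together with the leftover sum involving $\mathbb{F}_n(t)^k$, this yields the announced expansion with
$$
\widetilde{b}_k^{(p)}:=\frac{1}{p+1}\binom{p+1}{k}=\frac{p!}{k!\,(p+1-k)!}\quad\text{for }k\in\{2,\dots,p+1\}.
$$

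Finally, I would verify that these $\widetilde{b}_k^{(p)}$ are positive and strictly less than $p!$. Positivity is obvious. For the bound, observe that for $k\in\{2,\dots,p+1\}$ one has $k!\,(p+1-k)!\geq 2$, so $\widetilde{b}_k^{(p)}\leq p!/2<p!$. This completes the identification. There is no real obstacle: the argument is a direct adaptation of the derivation of (\ref{alphanp}), the only novelty being that the leading coefficient in $\widetilde{F}^{(p)}$ is $1/(p+1)$ rather than $1/(p+1)!$, which is exactly what accounts for the fact that the leading term here is $F(t)^p\alpha_n(t)$ (without the $1/p!$ factor) rather than $F(t)^p\alpha_n(t)/p!$.
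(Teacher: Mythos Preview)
Your proposal is correct and follows essentially the same approach as the paper: the paper does not spell out a separate proof for this proposition but simply indicates that it is analogous to the derivation of (\ref{alphanp}) and records the coefficients $\widetilde{b}_k^{(p)}=\binom{p+1}{k}/(p+1)$, which is exactly what you obtain. Your additional check that $k!(p+1-k)!\geq 2$ for $k\in\{2,\dots,p+1\}$, hence $\widetilde{b}_k^{(p)}\leq p!/2<p!$, cleanly justifies the stated bound.
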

The coefficients $b_k^{(p)}$ are given by $b_k^{(p)}:={p+1\choose k}/(p+1)$.
\\

More generally, we could define a broader family indexed by polynomials of two variables.
\begin{definition}
We define the family of integrated d.f.'s and integrated empirical d.f.'s,
for any polynomials $\mathbf{P}$ of two variables, any $n\in\mathbb{N}^*$ and any $t\in\mathbb{R}$, as
\begin{align*}
F^{(\mathbf{P})}(t)&:=\int_{-\infty}^t \mathbf{P}(F(s),F(t))\,dF(s),\\
\mathbb{F}_n^{(\mathbf{P})}(t)&:=\int_{-\infty}^t \mathbf{P}(\mathbb{F}_n(s),
\mathbb{F}_n(t))\,\mathbb{F}_n(s),
\end{align*}
together with the corresponding family of integrated empirical processes as
$$
\alpha_n^{\mathbf{P}}(t):=\sqrt{n}\left(\mathbb{F}_n^\mathbf{P}(t)-F^\mathbf{P}(t)\right)\!.
$$
\end{definition}

Below, we state the last result of the paper which is a representation of
$\alpha_n^{\mathbf{P}}$ by means of $\alpha_n$ analogous to (\ref{alphanp}).
This is the key point for deriving bounds similar to those obtained throughout the paper.
\begin{proposition}
The empirical process $\alpha_n^{\mathbf{P}}$ can be express as follows:
with probability~$1$,
\begin{equation}\label{alphaP}
\alpha_n^{\mathbf{P}}(t)=\mathbf{Q}(t)\,\alpha_n(t)+\mathbf{R}_n(t)
\quad\text{for}\quad t\in\mathbb{R}, \,n\in\mathbb{N}^*,
\end{equation}
where $\mathbf{Q}$ is the polynomial function of $F$ defined by
$$
\mathbf{\mathbf{Q}}(t)=\mathbf{P}(F(t),F(t))+\int_0^{F(t)}
\frac{\partial \mathbf{P}}{\partial y}(x,F(t))\,dx
\quad\text{for}\quad t\in\mathbb{R},
$$
and $\mathbf{R}_n$ satisfies the inequality
$$
|\mathbf{R}_n(t)|\leq \frac{\mathbf{A}}{\sqrt{n}}+\mathbf{B}\sum_{k=2}^{\mathbf{d}}
\frac{\alpha_n(t)^k}{n^{(k-1)/2}}
\quad\text{for}\quad t\in\mathbb{R}, \,n\in\mathbb{N}^*,
$$
$\mathbf{A}$, $\mathbf{B}$, $\mathbf{d}$ being three constants depending on $\mathbf{P}$.
\end{proposition}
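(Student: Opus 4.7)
The plan is to introduce the auxiliary one-variable polynomial $\Psi(y):=\int_0^y\mathbf{P}(u,y)\,du$, $y\in[0,1]$, and to first establish the two identities
\begin{equation*}
F^{(\mathbf{P})}(t)=\Psi(F(t))\quad\text{and}\quad\mathbb{F}_n^{(\mathbf{P})}(t)=\Psi(\mathbb{F}_n(t))+E_n(t),
\end{equation*}
with $|E_n(t)|\leq \mathbf{A}'/n$ uniformly in $t\in\mathbb{R}$ and $n\in\mathbb{N}^*$, for some constant $\mathbf{A}'$ depending only on $\mathbf{P}$. The first identity is immediate from the change of variable $u=F(s)$ in the integral defining $F^{(\mathbf{P})}(t)$. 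For the second, I would use (\ref{general}) together with the almost sure equality $\mathbb{F}_n(X_{i,n})=i/n$ to rewrite
\begin{equation*}
\mathbb{F}_n^{(\mathbf{P})}(t)=\frac{1}{n}\sum_{i=1}^{n\mathbb{F}_n(t)}\mathbf{P}(i/n,\mathbb{F}_n(t)),
\end{equation*}
then expand $\mathbf{P}(x,y)=\sum_{r,s}a_{rs}x^ry^s$ and apply Bernoulli's summation formula to each $n^{-(r+1)}\sum_{i=1}^{n\mathbb{F}_n(t)}i^r$ exactly as in the proof of Proposition~\ref{repFnp3bis}; this yields the closed form $\Psi(\mathbb{F}_n(t))+E_n(t)$ with the announced uniform $\mathcal{O}(1/n)$ control.

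Second, I would Taylor expand the polynomial $\Psi$ exactly around $F(t)$, setting $\mathbf{d}:=\deg\Psi$ (which depends only on $\mathbf{P}$):
\begin{equation*}
\Psi(\mathbb{F}_n(t))-\Psi(F(t))=\sum_{k=1}^{\mathbf{d}}\frac{\Psi^{(k)}(F(t))}{k!}\big(\mathbb{F}_n(t)-F(t)\big)^k.
\end{equation*}
Multiplying through by $\sqrt{n}$ and using $\sqrt{n}(\mathbb{F}_n(t)-F(t))=\alpha_n(t)$, this becomes
\begin{equation*}
\sqrt{n}\big(\Psi(\mathbb{F}_n(t))-\Psi(F(t))\big)=\Psi'(F(t))\,\alpha_n(t)+\sum_{k=2}^{\mathbf{d}}\frac{\Psi^{(k)}(F(t))}{k!}\,\frac{\alpha_n(t)^k}{n^{(k-1)/2}}.
\end{equation*}
By Leibniz's rule for differentiation under the integral sign applied to $\Psi$, one has $\Psi'(y)=\mathbf{P}(y,y)+\int_0^y\frac{\partial\mathbf{P}}{\partial y}(x,y)\,dx$, and therefore $\Psi'(F(t))=\mathbf{Q}(t)$ by the very definition of $\mathbf{Q}$ in the statement.

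Combining the two steps gives (\ref{alphaP}) with
\begin{equation*}
\mathbf{R}_n(t)=\sqrt{n}\,E_n(t)+\sum_{k=2}^{\mathbf{d}}\frac{\Psi^{(k)}(F(t))}{k!}\,\frac{\alpha_n(t)^k}{n^{(k-1)/2}}.
\end{equation*}
Setting $\mathbf{A}:=\mathbf{A}'$ (so that $|\sqrt{n}\,E_n(t)|\leq\mathbf{A}/\sqrt n$) and $\mathbf{B}:=\max_{2\leq k\leq\mathbf{d}}\sup_{y\in[0,1]}|\Psi^{(k)}(y)/k!|$, both finite since $\Psi$ is a polynomial of degree $\mathbf{d}$ and $F(t)\in[0,1]$, the announced bound on $|\mathbf{R}_n(t)|$ follows. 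The only real obstacle is a clean uniform control of the Riemann-sum remainder $E_n(t)$; but since $\mathbf{P}$ is polynomial this reduces, exactly as in Proposition~\ref{repFnp3bis}, to a finite application of Bernoulli's formula, which delivers a $\mathcal{O}(1/n)$ bound with constants depending only on $\mathbf{P}$ and independent of the realization of the sample.
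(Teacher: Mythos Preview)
Your proof is correct and follows essentially the same route as the paper: expand $\mathbf{P}$ in monomials, use Bernoulli's summation formula (via Proposition~\ref{repFnp3bis}) to write $\mathbb{F}_n^{(\mathbf{P})}(t)$ as a polynomial in $\mathbb{F}_n(t)$ plus an $O(1/n)$ correction, and then Taylor/binomially expand the difference around $F(t)$. Your packaging via the single auxiliary polynomial $\Psi(y)=\int_0^y\mathbf{P}(u,y)\,du$ and the identification $\mathbf{Q}(t)=\Psi'(F(t))$ through Leibniz's rule is a nice streamlining of the paper's coefficient-by-coefficient verification of the same identity.
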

\begin{proof}
Set $\mathbf{P}(x,y)=\sum_{i=0}^\mathbf{p}\sum_{j=0}^\mathbf{q} \mathbf{a}_{ij}\,x^iy^j$ for some integers $\mathbf{p},\mathbf{q}$
and some coefficients $\mathbf{a}_{ij}$. Then
$$
F^{(\mathbf{P})}(t)=\sum_{i=0}^\mathbf{p}\sum_{j=0}^\mathbf{q} \frac{\mathbf{a}_{ij}}{i+1}\,F(t)^{i+j+1}
$$
and, a.s., for any $n\in\mathbb{N}^*$ and any $t\in\mathbb{R}$,
\begin{align*}
\mathbb{F}_n^{(\mathbf{P})}(t)&=\sum_{i=0}^\mathbf{p}\sum_{j=0}^\mathbf{q}
\mathbf{a}_{ij}\,\mathbb{F}_n(t)^j\,\widetilde{\mathbb{F}}_n^{(i)}(t)
\\&=\sum_{i=0}^\mathbf{p}\sum_{j=0}^\mathbf{q} \frac{\mathbf{a}_{ij}}{i+1}\,\mathbb{F}_n(t)^{i+j+1}
+\sum_{i=0}^\mathbf{p}\sum_{j=0}^\mathbf{q}\sum_{k=1}^i \mathbf{a}_{ij}\,\widetilde{a}_k^{(i)}
\,\frac{\mathbb{F}_n(t)^{j+k}}{n^{i-k+1}}.
\end{align*}

Consequently, a.s., for any $n\in\mathbb{N}^*$ and any $t\in\mathbb{R}$,
\begin{align*}
\alpha_n^{\mathbf{P}}(t)=&\;\sqrt{n}\sum_{i=0}^\mathbf{p}\sum_{j=0}^\mathbf{q}
\frac{\mathbf{a}_{ij}}{i+1}\,\left(\mathbb{F}_n(t)^{i+j+1}-F(t)^{i+j+1}\right)
\\&+\sum_{i=0}^\mathbf{p}\sum_{j=0}^\mathbf{q}\sum_{k=1}^i \mathbf{a}_{ij}\,\widetilde{a}_k^{(i)}
\,\frac{\mathbb{F}_n(t)^{j+k}}{n^{i-k+1/2}}
\end{align*}
which, by using the same method than (\ref{alphanp}), we rewrite as (\ref{alphaP})
with
\begin{align*}
\mathbf{Q}(t)=&\;\sum_{i=0}^\mathbf{p}\sum_{j=0}^\mathbf{q} \frac{i+j+1}{i+1}\,\mathbf{a}_{ij}\,F(t)^{i+j}\\
=&\;\sum_{i=0}^\mathbf{p}\sum_{j=0}^\mathbf{q} \mathbf{a}_{ij}\,F(t)^{i+j}
+\sum_{i=0}^\mathbf{p}\sum_{j=0}^\mathbf{q} \frac{j}{i+1}\,\mathbf{a}_{ij}\,F(t)^{i+j},
\\
\mathbf{R}_n(t)=&\;\sum_{i=0}^\mathbf{p}\sum_{j=0}^\mathbf{q}\sum_{k=1}^i
\mathbf{a}_{ij}\,\widetilde{a}_k^{(i)}\,\frac{\mathbb{F}_n(t)^{j+k}}{n^{i-k+1/2}}
\\&
+\sum_{i=0}^\mathbf{p}\sum_{j=0}^\mathbf{q}\sum_{k=2}^{i+j+1}{i+j+1 \choose k}
\frac{\mathbf{a}_{ij}}{i+1}\,\frac{\alpha_n(t)^k}{n^{(k-1)/2}}\,F(t)^{i+j+1-k}.
\end{align*}
We easily conclude by using (\ref{inequality}).
\end{proof}

\section{Some possible extensions}

We will work under the following notation borrowed from \cite{zhang1997A}.
Let $X_1,\ldots, X_n$ be a sample of independent and identically distributed random variables
of a random variable $X$ with unknown distribution function $F(\cdot)$.
For the unknown distribution function $F(\cdot)$ underlying the random sample $X_1,\ldots, X_n$
we assume that we have the following auxiliary information:
there exist $r$ ($r\geq 1$) functionally independent functions $\eta_1(\cdot),\ldots,\eta_r(\cdot)$ such that,
by putting $\eta(\cdot)= (\eta_1(\cdot),\ldots,\eta_r(\cdot))^\top$,
\begin{equation}\label{(1.1)}
\mathbb{E}(\eta(X))=0.
\end{equation}
By $r$ linearly independent functions
$\eta_1(\cdot),\ldots,\eta_r(\cdot)$ we mean that any $\eta_i(\cdot)$ can not be expressed as a linear
combination of $\eta_1(\cdot),\ldots,\eta_{i-1}(\cdot),$ $\eta_{i+1}(\cdot),\ldots,\eta_r(\cdot)$ for $i = 1,\ldots, r$.
\noindent Let $p = (p_{1},\ldots,p_{n})$ denote a multinomial distribution on the
points $X_1,\ldots, X_n$ and put
$$
L(p)=\prod_{i=1}^np_{i}.
$$
Under the assumption (\ref{(1.1)}), the profile empirical likelihood function $L$
is defined by
$$
L=\max_{p}L(p)=\max_{p}\prod_{i=1}^np_{i}
$$
where the maximum is taken on the $n$-uples $p = (p_{1},\ldots,p_{n})$ subject to the constraints
$$
\sum_{i=1}^np_{i}=1,\quad\sum_{i=1}^np_{i}\eta(X_i)=0\quad\mbox{and}\quad p_{i}>0
\quad\mbox{for } i=1,\ldots,n.
$$
If $0$ is inside the convex hull of the points $\eta(X_1),\ldots,\eta(X_n)$, then $L$ exists
uniquely. A little calculus of variations shows that
$$
L=\max_{p}\prod_{i=1}^n\widehat{p}_{i},
$$
where
$$
\widehat{p}_{i}=\frac{1}{n}\left(\frac{1}{1+\lambda^\top \eta(X_i)}\right)
\quad\mbox{for } 1\leq i\leq n,
$$
$\lambda=(\lambda_1,\ldots,\lambda_r)^\top$ being the solution of
$$
\sum_{i=1}^n\widehat{p}_i\eta(X_i)
=\frac{1}{n}\sum_{i=1}^n\frac{1}{1+\lambda^\top \eta(X_i)}\eta(X_i)=0.
$$
Now, let
$$
\widehat{F}_n(z)=\sum_{i=1}^n\widehat{p}_i\mathbbm{1}_{\{X_i\leq z\}}
=\frac{1}{n}\sum_{i=1}^n\frac{\mathbbm{1}_{\{X_i\leq z\}}}{1+\lambda^\top \eta(X_i)}.
$$
Then $\widehat{F}_n(\cdot)$ can be viewed as an alternative estimator of $F(\cdot)$ satisfying (\ref{(1.1)}).
On the other hand, in the absence of the knowledge of (\ref{(1.1)}), the profile
empirical likelihood $L$ attains its maximum at the distribution $p = (1/n,\ldots,1/n)$, and
hence $\widehat{F}_n= F_n,$ the standard empirical distribution function.
Let us assume that $\Sigma=\mathbb{E}(\eta(X)\eta^{\top}(X))$ is a positive definite matrix and
$$
\mathbb{E}(\|\eta(X)\|^{2}_{r})=\int_{-\infty}^{\infty}\|\eta(x)\|_{r}^{2}dF(x)<\infty,
$$
where  $\|\cdot\|_{r}$ is the Euclidean norm in $\mathbb{R}^{r}$.
Let us denote by  $\{\mathbb{D}(x):x\in\mathbb{R}\}$ a centered Gaussian process
with sample continuous paths satisfying
\begin{align*}
\mathbb{E}(\mathbb{D}(x)\mathbb{D}(y))=&\;F(x\wedge y)-F(x)F(y)
\\
&-\mathbb{E}\big(\eta^\top(X)\mathbbm{1}_{\{X\leq x\}}\big)\Sigma^{-1}
\mathbb{E}\big(\eta(X)\mathbbm{1}_{\{X\leq y\}}\big)\quad\text{for}\quad x,y\in\mathbb{R}.
\end{align*}
\cite{zhang1997} proved the following functional central limit theorem
for the empirical process pertaining to $\widehat{F}_{n}(\cdot)$:
$$
\sqrt{n}(\widehat{F}_{n}-F)\Rightarrow \mathbb{D} \quad\mbox{in}\quad\mathcal{D}(\mathbb{R}).
$$
Above, ``$\Rightarrow \mathbb{D}\;\mbox{in}\;\mathcal{D}(\mathbb{R})$'' denotes convergence
in distribution in $\mathcal{D}(\mathbb{R})$.

\begin{definition}\label{def}
We define the families of integrated empirical d.f.'s, under auxiliary
information (\ref{(1.1)}), associated with the d.f.~$F$, for any $p\in\mathbb{N}$,
any $n\in\mathbb{N}^*$ and any $t\in\mathbb{R}$, as
$$
 \widehat{\mathbb{F}}_n^{(0)}(t):=\widehat{F}_n(t),
$$
$$
\widehat{\mathbb{F}}_n^{(1)}(t):=\int_{-\infty}^{t} \widehat{F}_n(s)\,d\widehat{F}_n(s),
$$
and, for $p\geq 2$,
$$
\widehat{\mathbb{F}}_n^{(p)}(t):=\int_{-\infty}^{t}\,d\widehat{F}_n(s_1)
\int_{-\infty}^{s_1}\,d\widehat{F}_n(s_2)\dots\int_{-\infty}^{s_{p-1}}
\widehat{F}_n(s_p)\,d\widehat{F}_n(s_p),
$$
together with the corresponding family of integrated empirical processes as
$$
\boldsymbol{\alpha}_n^{(p)}(t):=\sqrt{n}\left(\widehat{\mathbb{F}}_n^{(p)}(t)-F^{(p)}(t)\right)\!.
$$
\end{definition}

\begin{enumerate}
\item[$\bullet$]
In many interesting applications, we may have some partial information about
the distribution of the  population, although we do not know exactly the
underlying distribution function of the sample issued from the population;
for further details and motivation about such problem we refer to \cite{Owen1990,Owen1991,Owen2001}
and \cite{zhang2000}.
One of possible extension is to consider the problem of the strong and
weak convergences of the processes $\boldsymbol{\alpha}_n^{(p)}(\cdot)$ by making effective
use of auxiliary information (\ref{(1.1)}).

\item[$\bullet$]
According to \cite{cheng1995}, \cite{chengParzen1997}, some studies have shown
that a smoothed estimator  may be preferable to
the sample estimator. First, smoothing reduces the random variation in the data,
resulting in a more efficient estimator. Second, smoothing gives a smooth curve for
the quantile function that better displays the interesting features of the population
distribution. Motivated by all these facts, it will be of interest to consider
the smoothed versions of the processes considered in the present paper and to study
their asymptotic properties.
\end{enumerate}


\end{document}